\documentclass[11pt]{amsart} \textwidth=14.5cm \oddsidemargin=1cm
\evensidemargin=1cm

\usepackage{amsmath,wasysym}
\usepackage{amsxtra}
\usepackage{amscd}
\usepackage{amsthm}
\usepackage{amsfonts}
\usepackage{amssymb}
\usepackage{eucal}
\usepackage{graphics, color}
\usepackage{hyperref,mathrsfs}
\usepackage[usenames,dvipsnames]{xcolor}
\usepackage{tikz}
\usepackage{stmaryrd}
\usepackage[all]{xy}
\usepackage{hyperref}
\usepackage{color}
\usepackage{bbm} 
\allowdisplaybreaks

\hypersetup{colorlinks,linkcolor=blue, urlcolor=blue,
citecolor=blue}

\newtheorem{thm}{Theorem} [section]
\newtheorem{lem}[thm]{Lemma}
\newtheorem{cor}[thm]{Corollary}
\newtheorem{prop}[thm]{Proposition}

\theoremstyle{definition}
\newtheorem{definition}[thm]{Definition}

\newtheorem{rem}[thm]{Remark}

\numberwithin{equation}{section}

\newcommand{\A}{\mathcal A}

\newcommand{\B}{{\bf B}}

\newcommand{\C}{{\mathbb C}}
\newcommand{\co}{\text{co}}
\newcommand{\D}{\mathscr{D}}

\newcommand{\End}{{\mathrm{End}}}

\newcommand{\ff}{\mathtt{f}}
\newcommand{\sF}{{\mathscr F}}

\newcommand{\HH}{\mathbf H}
\newcommand{\Hom}{{\mathrm{Hom}}}
\newcommand{\id}{\mathbbm{1}}
\newcommand{\Iw}{\mathfrak{I}}

\newcommand{\mi}{\mathbf{i}}
\newcommand{\mj}{\mathbf{j}}
\newcommand{\mk}{\mathbf{k}}

\newcommand{\N}{{\mathbb N}}
\newcommand{\Ob}{\mathscr O}

\newcommand{\Q}{\mathbb {Q}}

\newcommand{\ro}{\text{ro}}
\newcommand{\Sc}{\mathbf S}
\newcommand{\Scg}{{\mathbf S}_v'} 
\newcommand{\T}{\mathbf T}

\newcommand{\Waff}{W_{\mathrm{aff}}}
\newcommand{\Z}{{\mathbb Z}}

\newcommand{\nc}{\newcommand}
\nc{\browntext}[1]{\textcolor{brown}{#1}}
\nc{\greentext}[1]{\textcolor{green}{#1}}
\nc{\redtext}[1]{\textcolor{red}{#1}}
\nc{\bluetext}[1]{\textcolor{blue}{#1}}
\nc{\brown}[1]{\browntext{ #1}}
\nc{\green}[1]{\greentext{ #1}}
\nc{\red}[1]{\redtext{ #1}}
\nc{\blue}[1]{\bluetext{ #1}}


\title[Cells in affine $q$-Schur algebras]
{Cells in affine $q$-Schur algebras}

\author[Weideng Cui]{Weideng Cui}
\address{School of Mathematics, Shandong University, Jinan, Shandong 250100, China}
\email{cwdeng@amss.ac.cn (Cui)}

\author[Li Luo]{Li Luo}
\address{School of Mathematical Sciences,
Shanghai Key Laboratory of Pure Mathematics and Mathematical Practice,
    East China Normal University, Shanghai 200241, China}
\email{lluo@math.ecnu.edu.cn (Luo)}

\author[Weiqiang Wang]{Weiqiang Wang}
\address{Department of Mathematics\\ University of Virginia\\ Charlottesville, VA 22904}
\email{ww9c@virginia.edu (Wang)}


\begin{document}

\begin{abstract}
We develop algebraic and geometrical approaches toward canonical bases for affine $q$-Schur algebras of arbitrary type introduced in this paper.  A duality between an affine $q$-Schur algebra and a corresponding affine Hecke algebra is established. We introduce an inner product on the affine $q$-Schur algebra, with respect to which the canonical basis is shown to be positive and almost orthonormal. We then formulate the cells and asymptotic forms for affine $q$-Schur algebras, and develop their basic properties analogous to the cells and asymptotic forms for affine Hecke algebras established by Lusztig. The results on cells and asymptotic algebras are also valid for $q$-Schur algebras of arbitrary finite type.
\end{abstract}

\maketitle
\setcounter{tocdepth}{1}
\tableofcontents

\section{Introduction}

\subsection{Background}

The $q$-Schur algebras of finite type A were introduced in \cite{DJ89, BLM90} via algebraic and geometric approaches. Recently, motivated by connections to BGG category $\mathcal O$, two of the authors \cite{LW22} formulated $q$-Schur algebras $\Sc^{\text{fin}}$ of arbitrary finite type, associated to an arbitrary subset $\check X_\ff$ of (co)weights invariant under a finite Weyl group $W$. For some specific choices of $\check X_\ff$ in type A, this version of $q$-Schur algebras reduces to the constructions in  \cite{DJ89, BLM90}, and for other specific choices of $\check X_\ff$ in type B, it recovers the $q$-Schur algebras in \cite{Gr97, BKLW18, FL15}; this version of $q$-Schur algebras is different from other constructions in the literature; for example, the Hecke endomorphism algebras studied in  \cite{DJM98, DS00} contain various $q$-Schur algebras $\Sc^{\text{fin}}$ of type B/C/D as proper subalgebras. The canonical basis for $\Sc^{\text{fin}}$ was constructed, and a geometric realization of $\Sc^{\text{fin}}$ and its canonical basis with positivity was obtained in \cite{LW22}.

The notion of left, right, and two-sided cells is natural from the viewpoint of canonical basis of Hecke algebras \cite{KL79}. A theory of cells for Hecke algebras has been developed systematically by Lusztig \cite{Lu85b, Lu87a, Lu87b, Lu89a, Lu95, Lu03} (see also \cite{Shi86, Xi94, BFO09}), and it has applications and connections to primitive ideals, nilpotent orbits, representations of finite groups of Lie type. Some of the main properties for cells of Hecke algebras are summarized in \cite{Lu03} and become known as Properties (P1)--(P15); these properties have been established for finite/affine Hecke algebras of equal parameters among others, and some results require the positivity of canonical bases, which are now known for Hecke algebras of general Coxeter groups thanks to Elias-Williamson \cite{EW14}.

The properties of cells in Hecke algebras of finite and affine type A are much better understood (see \cite{Lu85a, Shi86}), and  in finite type A it provides a reinterpretation of the classic Robinson-Schensted correspondence; however there are several unsettled conjectures for cells in affine Hecke algebras beyond type A (see \cite{Lu89a}).

On the other hand, the $q$-Schur algebras of affine type A has been studied in \cite{GV93, Gr99, Lu99, Lu00, Mc12}; also see \cite{FLLLWa, FLLLWb} for affine type C.

\subsection{Goal}

The main goal of this paper is to develop systematically a theory of cells for the $q$-Schur algebras of finite and affine types in wide generality. To that end, we will first formulate the notion of $q$-Schur algebras of arbitrary affine type  in the generality as done in \cite{LW22} for finite type. We then formulate and establish the Schur algebra counterparts of various basic concepts and properties of cells for Hecke algebras such as $\mathfrak a$-function, Properties (P1)--(P15), and asymptotic algebras.

A theory of canonical basis for $\imath$quantum groups arising from quantum symmetric pairs have been developed in \cite{BW18a, BW18b} (and in \cite{BKLW18, LiW18, FLLLWa} for finite/affine type AIII). Therefore it is natural to expect that a theory of cells for $\imath$quantum groups should exist (for cells in quantum groups, see \cite{Lu95} and  also \cite{Mc03}). The results of this paper in {\em classical} types will be used in a subsequent work to develop cells for $\imath$quantum groups of (finite/affine) type AIII. Since the notion of cells makes sense for $q$-Schur algebras of arbitrary type, it seems reasonable to formulate and study cells at such a generality first.

\subsection{Cells}

The notion of $q$-Schur algebras of arbitrary affine type starts with a subset $\check X_\ff$ of coweights invariant under an (extended) affine Weyl group $\widetilde W$. Our construction when taking $\check X_\ff=\check X$ in affine type A reduces to the aforementioned construction studied in  \cite{GV93, Gr99, Lu99, Lu00}. It is rather straightforward to formulate and establish the affine counterparts of the main results on the canonical bases, positivity, and their geometric realizations in \cite{LW22} for finite type $q$-Schur algebras.

Based on canonical bases, cells and asymptotic forms for $q$-Schur algebras of finite type A (\`a la \cite{BLM90, DJ89}) have been formulated by Du \cite{Du95, Du96}, while cells and asymptotic forms for $q$-Schur algebras of affine type A have been studied by McGerty \cite{Mc03}. This is made possible largely thanks to the $q$-Schur duality and the simpler structures of cells for Hecke algebras of finite and affine type A.

The formulations of cells for the finite and affine $q$-Schur algebras of type A involve various formulas in terms of matrices in \cite{Du92, Du95, DDPW, Mc03} (for type B/C, see \cite{BKLW18, FLLLWa, FL15}), which originated in \cite{BLM90, Lu99}. In the generality of affine $q$-Schur algebras of this paper, these formulas are often replaced uniformly by formulas involving affine Weyl groups.

In this paper, we formulate an inner product on affine $q$-Schur algebras geometrically. We establish a precise connection between this inner product and the inner product defined algebraically in \cite{Wi08, Wi11} for Schur algebroids. In particular, this establishes the positivity and almost orthonormality of the inner product with respect to canonical basis. Our general approach is actually simpler than some other approaches for affine type A and C which cannot be generalized to our setting (see \cite{Mc12, LiW18}).

We are able to formulate and establish Properties (P1)--(P15) (with one exception which seems less relevant in the $q$-Schur algebra setting) for affine $q$-Schur algebras as well as asymptotic Schur algebras. Under a mild regularity assumption, we show that there is a one-to-one correspondence between the two-sided cells in the affine $q$-Schur algebra and the two-sided cells in the corresponding extended affine Hecke algebra; see Corollary~\ref{lem:d=d3}.

It will also be interesting to develop a theory of cells for $q$-Schur algebras (of both finite and affine type in wide generality) of unequal parameters (cf. \cite{Lu03} for Hecke algebras of unequal parameters).

\subsection{Organization}

The paper is organized as follows.
In Sections~\ref{sec:prelim}--\ref{sec:geom} we develop the basic constructions and properties for affine $q$-Schur algebras in wide generality, following \cite{LW22}. This also allows us to set up notations which are to be used in the rest of the paper.

In Section~\ref{sec:inner}, we fomulate an inner product on an affine $q$-Schur algebra, and show that the canonical basis is positive and almost orthonormal with respect to the inner product; see Theorem~\ref{thm:inner}.

In Section~\ref{sec:cells}, a characterization on the left, right, and two-sided cells for affine $q$-Schur algebras is provided; see Proposition~\ref{prop:d=d2}. Properties (P1)--(P15) for affine $q$-Schur algebras are formulated in Theorem~\ref{thm:cell structures}.

In Section~\ref{sec:asymp}, we formulate the asymptotic Schur algebras, and establish their relations with affine $q$-Schur algebras and their left, right, and two-sided cells; see Proposition~\ref{characterization-two-cells}, Theorem~\ref{prop:d=d2=a'aa}, Proposition~\ref{prop:double-centralizer-property}.

We remark that the main results in Sections~\ref{sec:inner}--\ref{sec:asymp} are also new and valid for $q$-Schur algebras of finite type with the same proofs.

\vspace{2mm}
\noindent {\bf Acknowledgement.}
We thank Geordie Williamson for helpful discussions on the inner product for Schur algebroids. WC is partially supported by China Scholarship Council, Young Scholars Program of Shandong University and the NSF of China (grant No. 11601273). WC thanks University of Virginia for hospitality and support. LL is partially supported by the Science and Technology Commission of Shanghai Municipality (grant No. 18dz2271000, 21ZR1420000) and the NSF of China (grant No. 11871214). WW is partially supported by the NSF grant DMS-1702254 and DMS-2001351.

\section{Affine $q$-Schur algebras of arbitrary type}
\label{sec:prelim}
\subsection{Affine Weyl groups and associated Hecke algebras}

Let $\Phi$ be the root system and $\check{X}$ be the coweight lattice. Fix a simple system $\Pi=\{\alpha_1,\ldots,\alpha_d\}$ and denote by $\{\alpha^\vee_1,\ldots,\alpha^\vee_d\}$ their coroots. We denote by
\[
\check{X}^{\rm dom} =\{\mi \in \check X \mid \langle \alpha_k, \mi \rangle \in \N, \forall 1\le k \le d\}
\]
the set of dominant coweights.

The (finite) Weyl group $W$ is generated by the simple reflections $\{s_1,s_2,\ldots,s_d\}$ with identity $\id$.
The \emph{extended affine Weyl group} $\widetilde{W} =\check{X} \rtimes W$ is
\begin{equation*}
\widetilde{W}=\{t_\mi w~|~ \mi\in \check{X}, w\in W\}\quad\quad\mbox{with}\quad\quad t_\mi t_\mj=t_{\mi+\mj}\quad \mbox{and}\quad wt_\mi=t_{w(\mi)}w.
\end{equation*}
Let $\theta \in \Phi$ be the unique highest root and $\theta^\vee= \frac{2 \theta}{(\theta,\theta)}$ its coroot. Set $s_0=t_{\theta^\vee} s_{\theta}$; we have $s_0^2=1$. The \emph{(non-extended) affine Weyl group} $\Waff$ is the subgroup of $\widetilde{W}$ generated by $s_0,s_1,\ldots,s_d$.
There exists (cf. \cite{Lu87b}) a finite abelian group $\Omega$ of $\widetilde{W}$ such that
\begin{equation*}
\widetilde{W}=\Omega\ltimes\Waff,
\end{equation*}
and a length function $\ell: \widetilde{W}\longrightarrow \mathbb{Z}$ with $\ell(s_i)=1$, for $0\le i \le d$, and
\begin{equation}
 \label{eq:Ome}
\Omega=\{w\in \widetilde{W}~|~\ell(w)=0\}.
\end{equation}

There is a natural right action of $W$ on the coweight lattice $\check{X}$ defined by sending
\begin{equation}\label{action:W}
\mj\in \check{X} \mapsto \mj w=w^{-1}(\mj).
\end{equation}

Fix $n\in\Z_{\ge 2}$. There exists a right action of $\widetilde{W}$ on $\check{X}$ given by \eqref{action:W} and \eqref{action:t} below:
 \begin{equation}\label{action:t}
\mj t_\mi=\mj-n\mi.
\end{equation}
In particular, the right action of $s_0$ on $\check{X}$ is
\[
\mj s_0=s_{\theta}(\mj-n\theta^\vee).
\]

%

Let $v$ be an indeterminate and let
\[
\A=\Z[v,v^{-1}].
\]
The \emph{(extended) affine Hecke algebra} $\widetilde{\HH}$ of $\widetilde{W}$ is an $\A$-algebra which is a free $\A$-module with basis $H_w$, for $w\in \widetilde{W}$, such that
\begin{align*}
H_{w} H_{w'} = H_{ww'}, &\quad \mbox{if } \ell(ww')=\ell(w)+\ell(w');\\
(H_{s_i}-v^{-1})(H_{s_i}+v)=0, &\quad\mbox{for } i=0,1,2,\ldots,d.
\end{align*}
We shall write $H_i =H_{s_i}$ and $H_{\mi} =H_{t_\mi}$, for $\mi \in \check{X}$. The subalgebra $\HH$ of $\widetilde{\HH}$ generated by $H_1,H_2,\ldots,H_d$ is the Hecke algebra associated to $W$,  and the (non-extended) affine Hecke algebra $\HH_{\mathrm{aff}}$ of $\widetilde{\HH}$ is the subalgebra generated by $H_0,H_1,\ldots,H_d$.

For any $\mi\in\check{X}$, we define an element $\theta_\mi\in\widetilde{\HH}$ as follows. We write $\mi=\mj-\mk$
with $\mj,\mk\in \check{X}^{\rm dom}$. Set $\theta_\mi=H_{\mj} H^{-1}_{\mk}$, which is independent of the choice of $\mj$ and $\mk$. It has been shown in \cite{Lu89b} that
\begin{equation*}
\theta_{\mi}\theta_{\mj}=\theta_{\mj}\theta_{\mi},\quad (\forall\mi,\mj\in\check{X}).
\end{equation*}
Moreover, the elements $H_w\theta_{\mi}\ (w\in W, \mi\in\check{X})$ form an $\A$-basis for $\widetilde{\HH}$ (see \cite[Proposition~ 3.7]{Lu89b}), and there is an explicit commutation relation between $\theta_\mi$ and $H_k$ (see \cite[Proposition~3.6]{Lu89b}).

%
%
\subsection{Affine $q$-Schur algebras}

Fix an arbitrary $\widetilde{W}$-invariant subset of $\check{X}$:
\begin{equation*}
\check{X}_{\ff}\subset \check{X}.
\end{equation*}

Denote
\begin{equation*}
F=
\Big\{
\mi\in \check{X}~\big |~-n<\frac{2(\mi,\alpha)}{(\alpha,\alpha)}\leq 0, \forall \alpha\in\Phi^+
\Big \}.
\end{equation*}
Then $F\cap \check{X}_{\ff}$ is a fundamental domain for $\widetilde{W}$ acting on $\check{X}_{\ff}$ (cf. \cite[pp.233]{Jan03}).

We define the following $W$-invariant finite set:
\begin{align*}
F_{\ff}=W(F\cap \check{X}_{\ff}).
\end{align*}
Set
\begin{align}
\Lambda_{\ff}=\{\mbox{$\widetilde{W}$-orbits in $\check{X}_{\ff}$}\}.\label{lambdaf}
\end{align}
There exists a unique anti-dominant element in each $W$-orbit $\gamma\subset F_{\ff}$, which we denote by $\mi_\gamma$. Each $W$-orbit $\gamma\subset F_{\ff}$ determines a unique $\widetilde{W}$-orbit (also denoted by $\gamma$) in $\check{X}_{\ff}$. Thus we have a bijection
$$\Lambda_{\ff}\leftrightarrow \{\mbox{anti-dominant elements in $F_{\ff}$}\}, \quad \gamma \mapsto \mi_\gamma.$$

\begin{rem}
Note that the sets $F$ and hence $\Lambda_{\ff}$ are always finite. A standard choice of $\check{X}_{\ff}$ is $\check{X}_{\ff} =\check{X}$. \end{rem}

Associated to $F_{\ff}$ and $\check{X}_{\ff}$, we introduce the following free $\A$-modules
\begin{equation*}
\T =\bigoplus_{\mi\in \check{X}} \A u_\mi,
\qquad
 \T_{F_{\ff}} =\bigoplus_{\mi\in F_{\ff}} \A u_\mi,\qquad
 \T_{\ff}=\bigoplus_{\mi\in \check{X}_{\ff}} \A u_\mi,
\end{equation*}
with bases given by the symbols $u_\mi$, for $\mi\in \check{X}, F_{\ff}$ and $\check{X}_{\ff}$, respectively.
We shall refer to $\{u_\mi\}$ as the {\em standard basis} for $\T$, for $\T_{F_{\ff}}$ or for $\T_{\ff}$.

The right action of $\widetilde{W}$ on $\check{X}$ induces a right action of $\widetilde{W}$ on $\T$ (and $\T_{\ff}$) by
\begin{equation*}
u_{\mi}\cdot w=u_{\mi w}.
\end{equation*}
Define a right action of the Hecke algebra $\HH$ on $\T_{F_{\ff}}$ as follows:
\begin{align}\label{Haction}
u_{\mi}H_k&=
\left\{\begin{array}{ll}
v^{-1}u_{\mi}, & \mbox{if $\mi s_k=\mi$};\\
u_{\mi s_k}, & \mbox{if $\mi s_k\succ\mi$};\\
u_{\mi s_k}+(v^{-1}-v)u_{\mi}, & \mbox{if $\mi s_k\prec\mi$},
\end{array}\right. \quad\quad(\mi\in F_\ff, 1\leq k\leq d),
\end{align}
and define a right action of $\theta_\mj\ (\forall \mj\in \check{X})$ on $\T_{\ff}$ by
\begin{equation}\label{Taction}
u_\mi  \theta_\mj =u_{\mi-n\mj}, \quad (\forall \mi\in\check{X}_{\ff}).
\end{equation}
A right action of $\widetilde{\HH}$ on $\T_{\ff}$ is determined by \eqref{Haction}--\eqref{Taction}.

Introduce the following algebras (over $\A$ and $\Q(v)$, respectively):
\begin{equation}
  \label{eq:Schur}
\Sc_v=\Sc_v(\ff):=\End_{\widetilde{\HH}}(\T_{\ff}), \qquad \Sc_{v,\Q}=\Q(v)\otimes_{\A}\Sc_v.
\end{equation}
These algebras will be called {\em affine $q$-Schur algebras}. They include as special cases various affine $q$-Schur algebras in the literature where $\widetilde W$ is usually taken to be affine type A or C, cf. \cite{GV93, Lu99, Gr99, FLLLWa, FLLLWb}. Likewise, the $q$-Schur algebras of arbitrary finite type introduced in \cite{LW22} (as in \eqref{eq:Schur} with $\widetilde{\HH}$ replaced by $\HH$) include various $q$-Schur algebras in the literature as special cases (cf. \cite{BLM90, DJ89, Gr97, BKLW18}).

\subsection{A basis of affine $q$-Schur algebra}

For any {\em proper} subset $J\subset\{0,1,\ldots,d\}$, let $W_J$ be the parabolic (finite) subgroup of $\widetilde{W}$ generated by $\{s_j~|~j\in J\}$ and let $w_\circ^{J}$ be the unique longest element in $W_J$. Let $\D_J$ be the set of minimal length right coset representatives for $W_J\setminus \widetilde{W}$.
We define
\begin{align}
 \label{eq:xj}
 x_J=\sum_{w\in W_J}v^{\ell(w_\circ^{J})-\ell(w)}H_w.
\end{align}
(Our convention for $x_J$ here differs from some literature by a factor $v^{\ell(w_\circ^J)}$.)

For any $\gamma\in\Lambda_{\ff}$, we define the subset
\begin{equation}\label{Jgamma}
J_\gamma=\{k~|~0\leq k\leq d, \mi_\gamma s_k=\mi_\gamma\},
\end{equation}
Note that $J_\gamma$ is always a proper subset of $\{0,1,\ldots,d\}$ for any $\gamma\in\Lambda_{\ff}$.
Write
\begin{align*}
 w_\circ^{\gamma} =w_\circ^{J_\gamma},\quad
 W_\gamma =W_{J_{\gamma}},\quad
 \D_\gamma =\D_{J_\gamma},\quad
 x_\gamma=x_{J_\gamma}.
\end{align*}
The subspace of $\T_{\ff}$,
\[
\T_{\gamma}:=\bigoplus_{\mi\in\gamma}\A u_{\mi}
\]
is clearly a right $\widetilde{\HH}$-module.

\begin{lem}\label{identification} There is a right $\widetilde{\HH}$-module isomorphism
\[
\T_{\gamma}\cong x_\gamma\widetilde{\HH},
\qquad u_{\mi_\gamma}\mapsto x_\gamma,
\]
and hence,
$\T_{\ff}\cong\bigoplus_{\gamma\in\Lambda_{\ff}}x_\gamma\widetilde{\HH}.$
\end{lem}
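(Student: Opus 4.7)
The plan is to construct a right $\widetilde{\HH}$-linear map from $x_\gamma \widetilde{\HH}$ onto $\T_\gamma$ sending $x_\gamma$ to $u_{\mi_\gamma}$ (whose inverse realizes the claimed correspondence $u_{\mi_\gamma} \mapsto x_\gamma$), and then to prove it is bijective by matching compatible $\A$-bases indexed by $\D_\gamma$. The global decomposition $\T_\ff \cong \bigoplus_\gamma x_\gamma \widetilde{\HH}$ will follow at once from $\T_\ff = \bigoplus_{\gamma \in \Lambda_\ff} \T_\gamma$ as a direct sum of right $\widetilde{\HH}$-submodules.

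The first ingredient I would set up is a matching eigenvalue identity on both sides. On the $\T$-side, for $k \in J_\gamma$ formula \eqref{Haction} yields $u_{\mi_\gamma} H_k = v^{-1} u_{\mi_\gamma}$; here I crucially use that $J_\gamma \subseteq \{1,\ldots,d\}$, which holds because $\mi_\gamma \in F$ forces $\langle \theta, \mi_\gamma \rangle \leq 0 < n$ and hence $\mi_\gamma s_0 \neq \mi_\gamma$, so $0 \notin J_\gamma$. On the Hecke algebra side, the quadratic relation gives $x_\gamma H_k = v^{-1} x_\gamma$ for $k \in J_\gamma$ via the rank-one verification $(v + H_k)H_k = v^{-1}(v + H_k)$. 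By induction on length, both identities extend to $u_{\mi_\gamma} H_w = v^{-\ell(w)} u_{\mi_\gamma}$ and $x_\gamma H_w = v^{-\ell(w)} x_\gamma$ for all $w \in W_\gamma$.

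I would then use these to exhibit compatible bases. From the standard coset decomposition $\widetilde{\HH} = \bigoplus_{d \in \D_\gamma} \HH_{J_\gamma} H_d$ (with $\HH_{J_\gamma}$ the parabolic subalgebra generated by $\{H_k\}_{k \in J_\gamma}$, and $H_{wd} = H_w H_d$ for $w \in W_\gamma$, $d \in \D_\gamma$), combined with $x_\gamma \HH_{J_\gamma} = \A\, x_\gamma$, I get the free $\A$-basis $\{x_\gamma H_d : d \in \D_\gamma\}$ of $x_\gamma \widetilde{\HH}$. Since $W_\gamma$ is the $\widetilde{W}$-stabilizer of $\mi_\gamma$ by definition of $J_\gamma$, the bijection $\gamma \leftrightarrow W_\gamma \backslash \widetilde{W} \leftrightarrow \D_\gamma$, $\mi_\gamma d \leftrightarrow d$, provides the $\A$-basis $\{u_{\mi_\gamma d} : d \in \D_\gamma\}$ of $\T_\gamma$. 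I then define $\phi \colon x_\gamma \widetilde{\HH} \to \T_\gamma$ by $\phi(x_\gamma h) = u_{\mi_\gamma} h$: expanding $h = \sum_d h_d H_d$ with $h_d \in \HH_{J_\gamma}$, the eigenvalue identity yields $x_\gamma h_d = c_d\, x_\gamma$ and $u_{\mi_\gamma} h_d = c_d\, u_{\mi_\gamma}$ with the same scalars $c_d \in \A$, so $x_\gamma h = 0$ forces every $c_d = 0$ and hence $u_{\mi_\gamma} h = 0$; well-definedness follows, and right $\widetilde{\HH}$-linearity is built into the formula.

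It remains to verify surjectivity (after which equal $\A$-rank $|\D_\gamma|$ on both sides upgrades $\phi$ to an isomorphism), i.e.\ that $\T_\gamma$ is cyclic under $u_{\mi_\gamma}$. For $\mi \in \gamma$, writing $\mi = \mi_\gamma(w t_\mj) = \mi_\gamma w - n \mj$ with $w \in W$, $\mj \in \check{X}$, we have $u_\mi = u_{\mi_\gamma w}\, \theta_\mj$ by \eqref{Taction}; iterating \eqref{Haction} along a reduced expression for the minimal length representative of $w W_\gamma$, starting from the anti-dominant $\mi_\gamma$ (so each step is of type $\mi_\gamma s_k = \mi_\gamma$ or $\mi_\gamma s_k \succ \mi_\gamma$), produces $u_{\mi_\gamma w}$ as a unitriangular $\A$-combination of $\{u_{\mi_\gamma w'}\}_{w' \in W}$, and every $u_{\mi_\gamma w}$ is recovered inductively. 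The main obstacle will be this last unitriangularity bookkeeping: tracking the three cases in \eqref{Haction} along a reduced expression, together with their interaction with the $\theta_\mj$-action via \eqref{Taction}.
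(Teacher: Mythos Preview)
Your approach and the paper's are closely aligned: both identify the cyclic generators $u_{\mi_\gamma}$ and $x_\gamma$, and both ultimately rest on comparing the bases $\{x_\gamma H_w\}_{w\in\D_\gamma}$ and $\{u_{\mi_\gamma w}\}_{w\in\D_\gamma}$. You are more explicit than the paper about well-definedness of $\phi(x_\gamma h)=u_{\mi_\gamma}h$ (your eigenvalue matching in step~3 is exactly what underlies the paper's tacit claim that the annihilator of $x_\gamma$ is contained in that of $u_{\mi_\gamma}$), and your observation that $0\notin J_\gamma$ is correct and worth recording.

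There is, however, a genuine gap. Your final step asserts that surjectivity together with ``equal $\A$-rank $|\D_\gamma|$ on both sides upgrades $\phi$ to an isomorphism.'' This inference is valid for free modules of \emph{finite} rank over a commutative ring, but in the affine setting $\D_\gamma$ is infinite, and a surjection between free $\A$-modules of equal countably infinite rank need not be injective (a shift map already fails). So injectivity must be argued directly. The paper does this with a short maximal-length argument: writing a nonzero $H=\sum_{w\in\D_\gamma}c_w\,x_\gamma H_w$ and choosing $w_1$ of maximal length with $c_{w_1}\neq 0$, one checks that the coefficient of $u_{\mi_\gamma w_1}$ in $u_{\mi_\gamma}H$ is nonzero. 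Unwinding, this is the triangularity
\[
u_{\mi_\gamma}H_w\in u_{\mi_\gamma w}+\sum_{\substack{w'\in\D_\gamma\\ \ell(w')<\ell(w)}}\A\,u_{\mi_\gamma w'}\qquad(w\in\D_\gamma),
\]
which shows $\{u_{\mi_\gamma}H_w\}_{w\in\D_\gamma}$ is linearly independent and hence that $\phi$ is bijective. Your step~4 establishes this triangularity only for $w$ in the finite Weyl group (via \eqref{Haction}), whereas what is needed covers all of $\D_\gamma$, including reduced expressions through $s_0$ and elements of $\Omega$. Once you supply that (or simply replace the rank appeal by the paper's maximal-length argument), the proof is complete; the surjectivity paragraph then becomes redundant.
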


\begin{proof}
As a right $\widetilde{\HH}$-module, $\T_{\gamma}$ (resp. $x_\gamma \widetilde{\HH}$) is generated by $u_{\mi_\gamma}$ (resp. $x_\gamma$). We need only show that no nonzero element of $x_\gamma \widetilde{\HH}$ annihilates the vector $u_{\mi_\gamma}$. Suppose
$$H=\sum_{w\in \D_\gamma}c_{w}x_\gamma H_w\neq0, \quad \mbox{($c_w\in \A$ are almost zero except finitely many $w$)}$$ kills $u_{\mi_\gamma}$.
Take $w_1 \in \D_{\gamma}$ such that $c_{w_1}\neq0$ and $\ell(w_1)$ is maximal. We see the coefficient of the term $u_{\mi_\gamma w_1}$ in
$u_{\mi_\gamma} H$ is nonzero, which is impossible.
\end{proof}

Thanks to the above lemma, we can identify
\begin{equation*}
\Sc_v=\End_{\widetilde{\HH}}(\oplus_{\gamma\in{\Lambda_{\ff}}}x_\gamma\widetilde{\HH})
=\bigoplus_{\gamma,\nu\in{\Lambda_{\ff}}}\mathrm{Hom}_{\widetilde{\HH}}(x_\nu\widetilde{\HH},x_\gamma\widetilde{\HH}).
\end{equation*}

Let
\[
\D_{\gamma\nu} :=\D_\gamma\cap {\D_\nu}^{-1}
\]
be the set of minimal length double coset representatives of $W_\gamma\setminus\widetilde{W}/W_\nu$.
For $\gamma,\nu\in\Lambda_{\ff}$ and $g\in\D_{\gamma\nu}$, we define an element
\begin{align*}
\phi^g_{\gamma\nu}\in\Sc_v&=\End_{\widetilde{\HH}}(\oplus_{\gamma\in{\Lambda_{\ff}}}x_\gamma\widetilde{\HH}),
\end{align*}
which is determined by
\begin{align}
 \label{eq:phig}
\phi^g_{\gamma\nu} (x_{\nu'} ) &= \delta_{\nu,\nu'} v^{\ell(w_\circ^{\nu})} H_{W_\gamma g W_\nu},\quad \forall \nu'\in\Lambda_{\ff},
\end{align}
where \begin{equation}\label{HY}
H_Y:=\sum_{w\in Y}v^{-\ell(w)}H_w\quad\mbox{for any finite subset $Y\subset \widetilde{W}$}.
\end{equation}

The following result is an affine counterpart of  a property in Dipper-James \cite{DJ86} for finite type A. As remarked by Green in \cite{Gr99}, the proof therein remains valid for infinite Coxeter groups with respect to finite parabolic subgroups; it carries over in the generality of our setting exactly as in \cite[Proposition 3.5]{LW22}.

\begin{lem} \label{lemma:basis}
The set
$\{\phi^{g}_{\gamma\nu}~|~\gamma,\nu\in\Lambda_{\ff},g\in\D_{\gamma\nu}\}$ is an $\A$-basis of $\Sc_v$.
\end{lem}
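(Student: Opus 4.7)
The plan is to reduce to a Hom-space computation for each pair $(\gamma,\nu)$, and then carry out the standard double-coset argument (as in Dipper--James, and adapted to our setting in \cite[Proposition 3.5]{LW17}) using only the fact that the parabolic subgroups $W_\gamma,W_\nu$ are finite, even though $\widetilde{W}$ is infinite. By Lemma~\ref{identification},
\[
\Sc_v \;=\; \bigoplus_{\gamma,\nu\in\Lambda_\ff}\Hom_{\widetilde{\HH}}(x_\nu\widetilde{\HH},\,x_\gamma\widetilde{\HH}),
\]
and the definition \eqref{eq:phig} shows that $\phi^g_{\gamma\nu}$ kills $x_{\nu'}\widetilde{\HH}$ for $\nu'\ne\nu$ and lands in $x_\gamma\widetilde{\HH}$. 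Thus it suffices to prove, for each fixed pair $(\gamma,\nu)$, that $\{\phi^g_{\gamma\nu}\mid g\in\D_{\gamma\nu}\}$ is an $\A$-basis of $\Hom_{\widetilde{\HH}}(x_\nu\widetilde{\HH},x_\gamma\widetilde{\HH})$.

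First I would record the symmetrizer identity $x_\nu H_w=v^{-\ell(w)}x_\nu$ for $w\in W_\nu$ (a short induction using the quadratic relation $(H_i-v^{-1})(H_i+v)=0$ and pairing $w$ with $ws_i$ inside $W_\nu$). This identifies, via $f\mapsto f(x_\nu)$,
\[
\Hom_{\widetilde{\HH}}(x_\nu\widetilde{\HH},\,x_\gamma\widetilde{\HH}) \;\xrightarrow{\sim}\;
\bigl\{m\in x_\gamma\widetilde{\HH} \;\big|\; m H_w=v^{-\ell(w)}m \text{ for all }w\in W_\nu\bigr\}.
\]
So the task becomes to show that the elements $v^{\ell(w_\circ^\nu)}H_{W_\gamma g W_\nu}$ (for $g\in\D_{\gamma\nu}$) form an $\A$-basis of this bi-invariant subspace.

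Next I would verify well-definedness: for $g\in\D_{\gamma\nu}$ and $s_i\in W_\nu$, one uses the standard fact that $\ell(wgs_i)=\ell(wg)+1$ for every $w\in W_\gamma$ with $wg\in \D_\gamma\cap W_\gamma g W_\nu$ (or the analogous negative statement after pairing), so that $W_\gamma g W_\nu$ decomposes into pairs $\{u,us_i\}$ with $\ell(us_i)=\ell(u)+1$; the same short Hecke computation as for $x_\nu H_i=v^{-1}x_\nu$ then gives $H_{W_\gamma g W_\nu}H_i=v^{-1}H_{W_\gamma g W_\nu}$, and the analogous argument on the left shows that $H_{W_\gamma g W_\nu}\in x_\gamma\widetilde{\HH}$. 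Linear independence is immediate: distinct $g\in\D_{\gamma\nu}$ yield sums $H_{W_\gamma g W_\nu}$ supported on disjoint double cosets in the $\A$-basis $\{H_w\}_{w\in\widetilde{W}}$.

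The spanning step is the main obstacle, and is where the combinatorics of double cosets enters. Given $m\in x_\gamma\widetilde{\HH}$ satisfying $mH_w=v^{-\ell(w)}m$ for all $w\in W_\nu$, expand $m$ in the $\A$-basis $\{H_{W_\gamma d}\mid d\in\D_\gamma\}$ of $x_\gamma\widetilde{\HH}$. Using the Kilmoyer-type bijection
\[
\D_\gamma \;\longleftrightarrow\; \bigsqcup_{g\in\D_{\gamma\nu}}\bigl\{gu\mid u\in\text{min.\ reps of }(g^{-1}W_\gamma g\cap W_\nu)\backslash W_\nu\bigr\},
\]
which applies verbatim because $W_\gamma,W_\nu$ are finite (this is precisely Green's remark quoted before the lemma), one sees that within each double coset $W_\gamma g W_\nu$ the right $W_\nu$-action twists the corresponding basis elements among themselves, and the right $H_w=v^{-\ell(w)}$-eigenspace on each such block is one-dimensional, spanned by $H_{W_\gamma g W_\nu}$. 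Summing over $g\in\D_{\gamma\nu}$ gives the required spanning statement, completing the proof. The bulk of the work, and the only place where anything beyond a direct Hecke computation is needed, is this last combinatorial identification of the bi-invariants, which is exactly why the proof of \cite[Proposition 3.5]{LW17} transports unchanged to our affine setting.
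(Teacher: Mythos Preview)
Your proposal is correct and follows exactly the approach the paper intends: the paper does not give an explicit proof but simply cites Green \cite{Gr99} and \cite[Proposition~3.5]{LW17} for the standard Dipper--James double-coset argument, and what you have written is precisely a sketch of that argument, valid because $W_\gamma$ and $W_\nu$ are finite parabolics even though $\widetilde{W}$ is infinite. One small wording issue: in your well-definedness step the phrase ``$wg\in\D_\gamma\cap W_\gamma g W_\nu$ for $w\in W_\gamma$'' is off (that forces $w=\id$); the pairing you want is simply over all $u\in W_\gamma g W_\nu$ with $us_i>u$, which is what your subsequent Hecke computation actually uses.
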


\subsection{$q$-Schur duality}

A $\widetilde{W}$-orbit is {\em regular} if the action of $\widetilde{W}$ on this orbit is transitive.
Assume that $\Lambda_{\ff}$ contains at least one regular $\widetilde{W}$-orbit, say, $\omega$. It is clear that $J_\omega=\emptyset$, $W_\omega=\{\id\}$ and $x_\omega=H_\id$. Moreover, $\D_{\omega\omega}=\D_\omega=\widetilde{W}$ and $\D_{\gamma\omega}=\D_{\gamma}$.
We calculate that
\begin{equation}
\label{eq:phiomega}
\phi_{\gamma\omega}^g(x_{\omega})=H_{W_\gamma g W_\omega}=v^{-\ell(g)-\ell(w_\circ^{\gamma})}x_\gamma H_g
\end{equation}
for any $\gamma\in\Lambda_{\ff}$ and $g\in\D_{\gamma\omega}=\D_{\gamma}$. Thus we can identify the element $v^{\ell(g)+\ell(w_\circ^{\gamma})}\phi_{\gamma\omega}^g\in\Sc_v$ with the coset $x_\gamma H_g\in x_\gamma \widetilde{\HH}\cong\T_\gamma$.

\begin{prop} [$q$-Schur duality]
Suppose that $\Lambda_{\ff}$ contains at least one regular $\widetilde{W}$-orbit, and denote the right action of $\widetilde{\HH}$ on $\T_{\ff}$ by $\Upsilon$.
Then the algebras $\Sc_v$ and $\widetilde{\HH}$ satisfy the following double centralizer property:
\begin{equation*}
\begin{array}{ll}
\Sc_v=&\End_{\widetilde{\HH}}(\T_{\ff}),\\
&\End_{\Sc_v}(\T_{\ff})=\Upsilon(\widetilde{\HH})\cong\widetilde{\HH}.
\end{array}
\end{equation*}
\end{prop}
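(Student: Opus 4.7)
The plan is to leverage the regular orbit $\omega$ to reduce the double centralizer assertion to the elementary fact that left and right multiplication on $\widetilde{\HH}$ mutually centralize. The inclusion $\Upsilon(\widetilde{\HH})\subseteq\End_{\Sc_v}(\T_{\ff})$ is tautological, since by construction elements of $\Sc_v=\End_{\widetilde{\HH}}(\T_{\ff})$ commute with the right $\widetilde{\HH}$-action. Faithfulness of $\Upsilon$ is immediate from Lemma \ref{identification}: as $W_\omega=\{\id\}$ and $x_\omega=H_{\id}=1$, the isomorphism $\T_\omega\cong\widetilde{\HH}$ sends $u_{\mi_\omega}$ to $1$, so $\Upsilon(h)=0$ would force $u_{\mi_\omega}\cdot h = 1\cdot h = h = 0$.

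The core content is the reverse inclusion $\End_{\Sc_v}(\T_{\ff})\subseteq\Upsilon(\widetilde{\HH})$. Given $\psi\in\End_{\Sc_v}(\T_{\ff})$, I first use the decomposition $\T_{\ff}=\bigoplus_{\gamma\in\Lambda_{\ff}}\T_\gamma$ from Lemma \ref{identification}: the projector $e_\omega=\phi^{\id}_{\omega\omega}$ onto $\T_\omega$ lies in $\Sc_v$, so $\psi$ preserves $\T_\omega$. Next, $\psi|_{\T_\omega}$ is $\A$-linear and commutes with the corner algebra $e_\omega\Sc_v e_\omega\cong\End_{\widetilde{\HH}}(\T_\omega)\cong\End_{\widetilde{\HH}}(\widetilde{\HH})$, which acts on $\widetilde{\HH}$ by left multiplication. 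Since the $\A$-linear centralizer of left multiplication on $\widetilde{\HH}$ is right multiplication, there exists a unique $h\in\widetilde{\HH}$ such that $\psi(u_{\mi_\omega})=u_{\mi_\omega}\cdot h$.

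To conclude $\psi=\Upsilon(h)$ on all of $\T_{\ff}$, I propagate the identity from $\T_\omega$ to every $\T_\gamma$ using the Schur algebra elements $\phi^g_{\gamma\omega}$. By \eqref{eq:phiomega}, the collection $\{\phi^g_{\gamma\omega}(u_{\mi_\omega}) : g\in\D_\gamma\}$ spans $\T_\gamma$ (up to invertible scalars $v^{-\ell(g)-\ell(w_\circ^\gamma)}$). Using $\Sc_v$-linearity of $\psi$ together with right $\widetilde{\HH}$-linearity of $\phi^g_{\gamma\omega}$, one computes
$$\psi\bigl(\phi^g_{\gamma\omega}(u_{\mi_\omega})\bigr)=\phi^g_{\gamma\omega}\bigl(\psi(u_{\mi_\omega})\bigr)=\phi^g_{\gamma\omega}(u_{\mi_\omega}\cdot h)=\phi^g_{\gamma\omega}(u_{\mi_\omega})\cdot h,$$
so $\psi$ and $\Upsilon(h)$ agree on a spanning set of $\T_{\ff}$. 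The step where the regular orbit assumption is indispensable is the corner-algebra identification $e_\omega\Sc_v e_\omega\cong\widetilde{\HH}$; without a regular orbit, $\T_\omega$ would fail to be free of rank one over $\widetilde{\HH}$ and the argument would collapse. Apart from this, the proof is a direct adaptation of the classical Schur--Weyl double centralizer strategy to the present affine $q$-Schur setting, and no further obstacle is expected.
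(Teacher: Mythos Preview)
Your proof is correct and follows essentially the same approach as the paper's. Both arguments use the idempotent $\phi^{\id}_{\omega\omega}$ to see that $\psi(u_{\mi_\omega})\in\T_\omega\cong\widetilde{\HH}$, and then use the elements $\phi^g_{\gamma\omega}$ together with \eqref{eq:phiomega} to propagate the identity $\psi=\Upsilon(h)$ from $\T_\omega$ to all of $\T_{\ff}$; your corner-algebra phrasing $e_\omega\Sc_v e_\omega\cong\End_{\widetilde{\HH}}(\widetilde{\HH})$ is just a repackaging of the paper's direct observation that $\psi(x_\omega)=\phi^{\id}_{\omega\omega}(\psi(x_\omega))\in x_\omega\widetilde{\HH}$.
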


\begin{proof}
The only nontrivial claim in the proposition is the inclusion $\End_{\Sc_v}(\T_{\ff})\subset \Upsilon(\widetilde{\HH})$.
The proof of this inclusion is similar to the proofs of \cite[Theorem 2.3.3]{Gr99} and \cite[Theorem 3.8]{LW22}, and we sketch below. Let $\psi\in\End_{\Sc_{v}}(\T_{\ff})$.
Using \eqref{eq:phiomega}, we have
$\psi(x_\gamma H_g) =v^{\ell(g)+\ell(w_\circ^{\gamma})} \psi \big(\phi^g_{\gamma \omega} (x_\omega) \big) =v^{\ell(g)+\ell(w_\circ^{\gamma})} \phi^g_{\gamma \omega} \big( \psi ( x_\omega) \big)$; that is, $\psi$ is completely determined by its value on $\psi(x_\omega)$.
 In particular, $\psi(x_\omega) = \phi^{\id}_{\omega \omega} \big( \psi ( x_\omega) \big) \in x_\omega \widetilde{\HH}.$
Therefore we have a natural identification
$\End_{\Sc_v}(\T_{\ff})\cong x_\omega\widetilde{\HH}, \psi\mapsto \psi(x_\omega).$ That is, endomorphisms in $\End_{\Sc_v}(\T_{\ff})$ are in one-to-one correspondence with elements of $\widetilde{\HH}$ acting by right multiplication.
\end{proof}

\subsection{The canonical basis for $\T_{\ff}$}

Recall $\Omega$ in \eqref{eq:Ome}. The bar involution on $\widetilde{\HH}$ is the $\mathbb{Z}$-algebra automorphism given by  (cf. \cite{Lu87b})
\[
\overline{H_k}=H_k^{-1}~ (0\leq k\leq d), \quad  \overline{g}=g ~ (g \in \Omega), \quad\mbox{and}\quad \overline{v}=v^{-1}.\]
Denote by ``$<$'' the Bruhat order on $\widetilde{W}$.

Take a $\widetilde{W}$-orbit $\gamma\in\Lambda_{\ff}$. For each $w\in \D_\gamma$, there exists  \cite{KL79,Deo87} (and see \cite{Lu87b} for passing to the extended affine Hecke algebras) a unique element $\mathcal{C}_w^\gamma\in x_\gamma\widetilde{\HH}$ such that
\begin{itemize}
\item[(1)]  $\overline{\mathcal{C}_w^\gamma}=\mathcal{C}_w^\gamma$,
\item[(2)] $\mathcal{C}_w^\gamma\in x_\gamma(H_w+\sum_{y\in \D_\gamma, y<w}v\mathbb{Z}[v]H_y)$.
\end{itemize}

The elements $\{\mathcal{C}_w^\gamma~|~w\in \D_\gamma\}$ forms an $\A$-basis of $x_\gamma\widetilde{\HH}$, which is called the canonical basis or parabolic KL basis. If $\gamma$ is regular (hence $x_\gamma=H_\id$), then we are back to the original KL basis of the affine Hecke algebra $\widetilde{\HH}$, which will be denoted by $\mathcal{C}_w$. We denote
\begin{align}
  \label{eq:KL}
 \mathcal{C}_w =\sum_{y\leq w} p_{y, w} (v) H_y,
 \end{align}
for $p_{y, w} (v)  \in v\N[v]$ (when $y< w$) and $p_{w,w} =1$.

The bar involution on $\T$ (or $\T_{\ff}$) can be characterized by
\begin{align*}
\overline{u_{\mi_\gamma}}=u_{\mi_\gamma}, &\quad\mbox{for $\gamma\in\Lambda_{\ff}$};\\
\overline{u_\mi h}=\overline{u_\mi}\overline{h},&\quad\mbox{for $\mi\in \check{X}$ (or $\check{X}_{\ff}$) and $h\in\widetilde{\HH}$}.
\end{align*}

By the identification in Lemma \ref{identification}
\begin{align*}
\Re:\T_{\ff}\cong\bigoplus_{\gamma\in\Lambda_{\ff}}x_\gamma\widetilde{\HH}, \quad u_{\mi_\gamma}\mapsto x_\gamma,
\end{align*}
the $\A$-module $\T_{\ff}$ admits a canonical basis
\begin{equation*}
\mathbf{B}(\T_{\ff})=\{\mathcal{C}_{\mi_\gamma w}:=\Re^{-1}(\mathcal{C}_w^{\gamma})~|~\gamma\in\Lambda_{\ff}, w\in \D_\gamma\}.
\end{equation*}
This basis can be characterized by the following two properties:
\begin{itemize}
\item[(1)] $\overline{\mathcal{C}_\mi}=\mathcal{C}_\mi$, for $\mi\in \check{X}_{\ff}$;
\item[(2)] $\mathcal{C}_{\mi_\gamma}\in u_{\mi_\gamma}+\sum_{y\in \D_\gamma, y<w} v\mathbb{Z}[v]u_{\mi_\gamma y}$, for $\gamma\in\Lambda_{\ff}, w\in \D_\gamma$.
\end{itemize}

\subsection{The canonical basis for $\Sc_v$}

Set
\begin{equation}
 \label{eq:Xi}
\Xi=\{ C =(\gamma,g,\nu)~|~\gamma,\nu\in\Lambda_{\ff}, g\in\D_{\gamma\nu}\}.
\end{equation}
We introduce a shorthand notation
\begin{align}  \label{eq:HC}
H_C =H_{ W_\gamma g  W_\nu},
\quad \text{ where $C =(\gamma,g,\nu)$ and $H_{ W_\gamma g  W_\nu}$ is defined in \eqref{HY}}.
\end{align}
For $C=(\gamma,g,\nu)\in\Xi$, let $g_{\gamma\nu}^+$ be the longest element in $ W_\gamma g W_\nu$. In particular, $\id_{\nu\nu}^+=w_\circ^\nu$ is the longest element in $ W_\nu$. (A variant of) the following can be found in \cite{Cur85}.

\begin{lem}\label{eq:xandC}
Let $C =(\gamma,g,\nu)\in\Xi$. Then we have
\begin{itemize}
\item[(1)] $ W_\gamma g  W_\nu = \{w \in W ~|~ g \leq w \leq g^+_{\gamma\nu}\}$;

\item[(2)] $H_C
= v^{-\ell(g^+_{\gamma\nu})} \mathcal{C}_{g^+_{\gamma\nu}} + \sum_{\substack{y\in \D_{\gamma\nu}\\
 y < g }} c^{(\gamma,\nu)}_{y,g} \mathcal{C}_{y^+_{\gamma\nu}}$,
for $c^{(\gamma,\nu)}_{y,g}\in\A$.
\end{itemize}
\end{lem}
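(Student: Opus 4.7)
Part (1) is a classical description of parabolic double cosets in a Coxeter group, going back to Curtis \cite{Cur85} (see also Geck--Pfeiffer); the argument uses only that $W_\gamma$ and $W_\nu$ are \emph{finite} parabolic subgroups of the Coxeter system underlying $\widetilde W$, which holds in our setting since $J_\gamma,J_\nu\subsetneq\{0,1,\ldots,d\}$. First, the minimal-length characterization of $g\in\D_{\gamma\nu}$ together with the subword property of Bruhat order give $g\le w$ for every $w\in W_\gamma g W_\nu$. The longest element $g^+_{\gamma\nu}$ exists by finiteness of the double coset. Finally the equality $W_\gamma g W_\nu=[g,g^+_{\gamma\nu}]$ follows by a standard strong-exchange/downward-induction argument, showing that the double coset is closed under taking Bruhat-smaller elements down to $g$.

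For Part (2) I would argue by induction on $g$ in the Bruhat order on $\D_{\gamma\nu}$. Combining Part (1) with the definition \eqref{HY},
\[
H_C \;=\; v^{-\ell(g^+_{\gamma\nu})}H_{g^+_{\gamma\nu}} \;+\; \sum_{\substack{w\in[g,\,g^+_{\gamma\nu}]\\ w<g^+_{\gamma\nu}}} v^{-\ell(w)}H_w,
\]
while \eqref{eq:KL} gives $v^{-\ell(g^+_{\gamma\nu})}\mathcal{C}_{g^+_{\gamma\nu}}=v^{-\ell(g^+_{\gamma\nu})}H_{g^+_{\gamma\nu}}+(\text{lower-length }H\text{-terms})$. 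Hence the leading terms cancel in
\[
D \;:=\; H_C - v^{-\ell(g^+_{\gamma\nu})}\mathcal{C}_{g^+_{\gamma\nu}},
\]
so $D$ lies in $\bigoplus_{w<g^+_{\gamma\nu}}\A H_w$.

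The crucial structural input needed to close the induction is that the canonical projection $\widetilde W\twoheadrightarrow W_\gamma\backslash\widetilde W/W_\nu$ is order-preserving: if $w\le g^+_{\gamma\nu}$ in $\widetilde W$, then the minimal representative $y$ of its $(W_\gamma,W_\nu)$-double coset lies in $\{y\in\D_{\gamma\nu}:y\le g\}$. (This is another standard consequence of the Curtis--Geck--Pfeiffer double coset combinatorics, companion to Part (1).) Combined with Part (1) it implies that the $H$-support of $D$ is contained in $\bigcup_{y\in\D_{\gamma\nu},\,y<g}[y,\,y^+_{\gamma\nu}]$, and that within each such interval $D$ may be matched, one double coset at a time in descending order of $\ell(y^+_{\gamma\nu})$, to an $\A$-multiple of $H_{(\gamma,y,\nu)}$ by peeling off its top-length element $y^+_{\gamma\nu}$. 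The inductive hypothesis applied to each $y<g$ then converts the resulting $\A$-combination of $H_{(\gamma,y,\nu)}$'s into the desired $\A$-combination of $\mathcal{C}_{y^+_{\gamma\nu}}$'s, determining the coefficients $c^{(\gamma,\nu)}_{y,g}$.

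The main obstacle is the order-preservation statement for the projection onto $(W_\gamma,W_\nu)$-double cosets; everything else amounts to leading-term comparison and triangular bookkeeping over $\A$. Passing from $\Waff$ to the extended affine Weyl group $\widetilde W$ causes no difficulty, since elements of the length-zero subgroup $\Omega$ preserve both length and Bruhat order.
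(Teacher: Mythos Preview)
Your treatment of Part~(1) is fine and matches the paper, which simply attributes the lemma to Curtis \cite{Cur85} without proof.

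For Part~(2) there is a genuine gap. After cancelling the top term you correctly observe that $D:=H_C-v^{-\ell(g^+_{\gamma\nu})}\mathcal{C}_{g^+_{\gamma\nu}}$ has $H$-support contained in $\{w:w<g^+_{\gamma\nu}\}$. Order-preservation of the projection to double cosets then only gives support in $\bigcup_{y\le g}[y,y^+_{\gamma\nu}]$, not $\bigcup_{y<g}[y,y^+_{\gamma\nu}]$ as you assert: the interval $[g,g^+_{\gamma\nu})$ is not excluded by that argument. More seriously, your ``peeling off'' step claims that on each double-coset interval the restriction of $D$ is an $\A$-multiple of $H_{(\gamma,y,\nu)}$. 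This is false for a general element whose $H$-support merely lies in the union of such intervals (e.g.\ take $W_\gamma=W_\nu=\{\id,s\}$ and $E=H_{\id}$: its support sits in $[\id,s]$ but $E$ is not a multiple of $H_{(\gamma,\id,\nu)}=H_{\id}+v^{-1}H_s$). So peeling by top-length elements cannot terminate in general.

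What is missing is precisely the substance of Curtis's result: for $w=g^+_{\gamma\nu}$ and any simple reflection $s\in J_\gamma$ (resp.\ $s\in J_\nu$) one has $sw<w$ (resp.\ $ws<w$), whence $H_s\mathcal{C}_w=v^{-1}\mathcal{C}_w$ (resp.\ $\mathcal{C}_wH_s=v^{-1}\mathcal{C}_w$). Iterating gives $p_{y,g^+_{\gamma\nu}}=v^{\ell(y^+)-\ell(y)}\,p_{y^+,g^+_{\gamma\nu}}$ for every $y$ in a double coset $[y_0,y_0^+]$; equivalently $\mathcal{C}_{g^+_{\gamma\nu}}\in x_\gamma\widetilde{\HH}\cap\widetilde{\HH}x_\nu$, the $\A$-span of the $H_{(\gamma,y,\nu)}$. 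With this in hand both of your unjustified claims become true and the triangular inversion is immediate. The paper's own proof is nothing more than the citation of \cite{Cur85}, so you should either invoke that result directly or insert the short $H_s\mathcal{C}_w=v^{-1}\mathcal{C}_w$ argument above before the peeling step.
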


We define a bar involution $\bar{\phantom{x}}$ on $\Sc_{v}$ as follows: for each $f \in \Hom_{\widetilde{\HH}}(x_\nu \widetilde{\HH}, x_\gamma \widetilde{\HH}) \subset \Sc_{v}$, let $\overline{f}\in\Hom_{\widetilde{\HH}}(x_\nu \widetilde{\HH}, x_\gamma \widetilde{\HH}) \subset \Sc_{v}$ be the
$\widetilde{\HH}$-linear map which sends $x_\nu=\mathcal{C}_{w_\circ^\nu}$ to $\overline{f(\mathcal{C}_{w_\circ^\nu})}$. That is, we have
\begin{equation}
    \label{eq:barS}
\overline{f}(x_{\nu'} h) = \delta_{\nu',\nu} \overline{f(x_\nu)} h,
\quad
\mbox{for $h \in \widetilde{\HH}$}.
\end{equation}
Hence it follows from Lemma \ref{eq:xandC} that
\begin{eqnarray}
\phi_{\gamma\nu}^g(\mathcal{C}_{w_\circ^\nu})
&=& v^{\ell(w_\circ^\nu)-\ell(g^+_{\gamma\nu})} \mathcal{C}_{g^+_{\gamma\nu}}
 + \sum_{\substack{y\in\D_{\gamma\nu}\\y < g}}
v^{\ell(w_\circ^\nu)} c_{y,g}^{(\gamma,\nu)} \mathcal{C}_{y^+_{\gamma\nu}},
    \label{eq:eA}
\\
\overline{\phi_{\gamma\nu}^g}(\mathcal{C}_{w_\circ^\nu})
&=& v^{\ell(g^+_{\gamma\nu})-\ell(w_\circ^\nu)} \mathcal{C}_{g^+_{\gamma\nu}}
+ \sum_{\substack{y\in\D_{\gamma\nu}\\y < g}}
v^{-\ell(w_\circ^\nu)}\overline{c_{y,g}^{(\gamma,\nu)}} \mathcal{C}_{y^+_{\gamma\nu}}.
    \label{eq:eAbar}
\end{eqnarray}

For any $(\gamma,g,\nu)\in\Xi$, we set
\begin{equation}\label{def:dA}
[\phi_{\gamma\nu}^g]=v^{\ell(g_{\gamma\nu}^+)-\ell(w_\circ^\nu)}\phi_{\gamma\nu}^g.
\end{equation}
Then $\{[\phi_{\gamma\nu}^g] \mid (\gamma,g,\nu)\in\Xi \}$ forms an $\A$-basis for $\Sc_v$, which is called a {\em standard basis}. Thanks to \eqref{eq:eA} and \eqref{eq:eAbar}, we have
\begin{equation*}
\overline{[\phi_{\gamma\nu}^g]}\in [\phi_{\gamma\nu}^g]+\sum_{g>y\in\D_{\gamma\nu}}\A[\phi_{\gamma\nu}^y].
\end{equation*}

Similar to \cite{Du92}, we define
\begin{equation*}
   \{\phi_{\gamma\nu}^g\} \in  \Hom_{\widetilde{\HH}}(x_\nu\widetilde{\HH}, x_\gamma\widetilde{\HH}), \text{  and hence } \{\phi_{\gamma\nu}^g\} \in \Sc_{v},
\end{equation*}
   by requiring
\begin{equation*}
  \{\phi_{\gamma\nu}^g\} (\mathcal{C}_{w^{\nu}_\circ}) = \mathcal{C}_{g^+_{\gamma\nu}}.
\end{equation*}
It follows by \eqref{eq:barS} that $\{\phi_{\gamma\nu}^g\}$ is bar invariant, i.e.,
\begin{equation}\label{barA}
     \overline{\{\phi_{\gamma\nu}^g\}} = \{\phi_{\gamma\nu}^g\}.
\end{equation}

Following \cite[(2.c), Lemma~3.8]{Du92}, we have
\begin{equation}   \label{eq:canonical}
\{\phi_{\gamma\nu}^g\} \in [\phi_{\gamma\nu}^g] + \sum_{y< g}v\Z[v]   \, [\phi_{\gamma\nu}^y].
\end{equation}
More precisely, we have (cf. \cite{Cur85, Du92})
\begin{equation}   \label{eq:CB1}
\{\phi_{\gamma\nu}^g\} =  [\phi_{\gamma\nu}^g] + \sum_{y< g} p_{y^+_{\gamma\nu},g^+_{\gamma\nu}} [\phi_{\gamma\nu}^y].
\end{equation}
where $p_{y^+_{\gamma\nu},g^+_{\gamma\nu}} \in v\Z[v]$ are Kazhdan-Lusztig polynomials. 

By Lemma~ \ref{lemma:basis}, \eqref{def:dA} and \eqref{eq:canonical}, the set
$$\B (\Sc_v) =\big\{\{\phi_{\gamma\nu}^g\}\ |\ (\gamma,g,\nu) \in \Xi \big\}$$ forms an $\A$-basis of $\Sc_v$, which is called {\it the canonical basis}.
We summarize this as follows.

\begin{prop}
There exists a canonical basis $\B (\Sc_v)=\big\{\{\phi_{\gamma\nu}^g\}\ |\ (\gamma,g,\nu) \in \Xi \big\}$ for $\Sc_v$,
which is characterized by the properties \eqref{barA}--\eqref{eq:canonical}.
\end{prop}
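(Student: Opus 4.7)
The plan is to verify that $\B(\Sc_v)$ is an $\A$-basis and that its elements are uniquely pinned down by the two listed properties. Since \eqref{barA} and \eqref{eq:canonical} have essentially been established just above the proposition, the remaining task is to assemble these into a clean basis-plus-characterization statement, and then handle uniqueness via a standard Kazhdan--Lusztig argument.

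First I would show that $\B(\Sc_v)$ is an $\A$-basis. By Lemma~\ref{lemma:basis}, the set $\{\phi_{\gamma\nu}^g \mid (\gamma,g,\nu) \in \Xi\}$ is an $\A$-basis of $\Sc_v$; multiplying each element by the invertible scalar $v^{\ell(g_{\gamma\nu}^+)-\ell(w_\circ^\nu)}$ as in \eqref{def:dA} shows that the standard basis $\{[\phi_{\gamma\nu}^g]\}$ is also an $\A$-basis. The relation \eqref{eq:canonical} expresses each $\{\phi_{\gamma\nu}^g\}$ as $[\phi_{\gamma\nu}^g]$ plus an $\A$-linear combination of $[\phi_{\gamma\nu}^y]$ with $y<g$, so the transition matrix between $\B(\Sc_v)$ and the standard basis is unitriangular with respect to the Bruhat order on $\D_{\gamma\nu}$ (for each fixed pair $(\gamma,\nu)$). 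Hence $\B(\Sc_v)$ is an $\A$-basis.

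Next I would collect the characterizing properties. Bar-invariance is exactly \eqref{barA}, which follows directly from the definition of $\{\phi_{\gamma\nu}^g\}$ together with the bar-invariance of $\mathcal{C}_{g^+_{\gamma\nu}}$ and $\mathcal{C}_{w^\nu_\circ}$ and the definition of $\bar{\phantom{x}}$ on $\Sc_v$ via \eqref{eq:barS}. The triangularity \eqref{eq:canonical} has already been derived from \eqref{eq:eA}, \eqref{eq:eAbar}, and the bar-invariance of $\{\phi_{\gamma\nu}^g\}$.

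The last and most delicate step is uniqueness. Suppose $\xi \in \Sc_v$ is bar-invariant and satisfies $\xi \in [\phi_{\gamma\nu}^g] + \sum_{y<g} v\Z[v]\,[\phi_{\gamma\nu}^y]$. The difference $\xi - \{\phi_{\gamma\nu}^g\}$ is bar-invariant and lies in $\sum_{y<g} v\Z[v]\,[\phi_{\gamma\nu}^y]$. Applying the classical Kazhdan--Lusztig lemma, which says that the only bar-invariant element of $\sum_{y<g} v\Z[v]\,[\phi_{\gamma\nu}^y]$ is zero (by induction on $g$ using that the transition matrix from $\{\phi_{\gamma\nu}^y\}$ to $[\phi_{\gamma\nu}^y]$ is unitriangular over $v\Z[v]$), forces $\xi = \{\phi_{\gamma\nu}^g\}$. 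The main (minor) obstacle is ensuring that this Kazhdan--Lusztig uniqueness argument applies in the present double-coset setting, but this is immediate once one notes that each pair $(\gamma,\nu)$ produces an independent block of the standard basis indexed by $\D_{\gamma\nu}$, and the Bruhat order on $\D_{\gamma\nu}$ is a well-founded partial order on a countable (but possibly infinite) set, so the standard induction still goes through.
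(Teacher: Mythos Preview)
Your proposal is correct and follows the same approach as the paper: the paper treats this proposition as a summary statement, noting just before it that ``By Lemma~\ref{lemma:basis}, \eqref{def:dA} and \eqref{eq:canonical}, the set \dots\ forms an $\A$-basis of $\Sc_v$,'' which is exactly your unitriangularity argument. Your added uniqueness step via the standard Kazhdan--Lusztig argument is correct and is implicit in the paper's use of the word ``characterized''; note that for each fixed $g$ the sum over $y<g$ is finite (intervals in Bruhat order are finite), so the maximal-element argument goes through without any subtlety.
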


\section{A geometric setting for affine $q$-Schur algebras}
\label{sec:geom}
\subsection{Affine flag varieties}

Let $\mathbb{F}_{q}$ be a finite field of ${q}$ elements of characteristic $>3$, where ${q}$ is a prime power. Let ${\mathbb K}=\mathbb{F}_{{q}}((\varepsilon))$ be the field of formal Laurent series over $\mathbb{F}_{{q}}$ and $\mathfrak{o}=\mathbb{F}_{{q}}[[\varepsilon]]$ the ring of formal power series. Let $\mathbf{G}({\mathbb K})$ (resp. $\mathbf{G}(\mathbb{F}_{q})$, $\mathbf{G}(\mathfrak{o})$) be a connected algebraic group of rank $d$ defined over ${\mathbb K}$ (resp. $\mathbb{F}_{{q}}$, $\mathfrak{o}$). Write $G=\mathbf{G}({\mathbb K})$. Fix a Borel subgroup $B\subset\mathbf{G}(\mathbb{F}_{{q}})$, and we have an Iwahori subgroup $\Iw\subset G$ corresponding to $B$. Associated to each proper subset $J\subset\{0,1,\ldots,d\}$, we have a standard parahoric subgroup $P_J$. In particular, $P_{\emptyset}=\Iw$ and $P_{\{1,\ldots,d\}}=\mathbf{G}(\mathfrak{o})$. Recall $\Lambda_{\ff}$ from \eqref{lambdaf} and $J_\gamma$ from \eqref{Jgamma}. For each $\gamma\in \Lambda_{\ff}$, we denote $P_\gamma=P_{J_\gamma}$.

Set
$$\mathscr{F}=\bigsqcup_{\gamma\in\Lambda_{\ff}}G/P_\gamma,\quad\qquad \mathscr{B}=G/\Iw.$$
(Here $\bigsqcup$ means disjoint union.)

There is a natural action of $G$ on $\mathscr{F}$ and on $\mathscr{B}$. So $G$ can act diagonally on $\mathscr{F}\times\mathscr{B}$, $\mathscr{F}\times\mathscr{B}$ and $\mathscr{F}\times\mathscr{B}$, respectively. Recall $\Xi$ from \eqref{eq:Xi}. The following lemma is standard, see \cite{IM65, BLM90, GL92}  and also see \cite[Lemma 4.1]{LW22}.

\begin{lem}
\label{lem:bijections}
There are natural bijections:
\begin{equation*}
G\setminus(\mathscr{F}\times\mathscr{F})\longleftrightarrow\Xi,\quad G\setminus(\mathscr{F}\times\mathscr{B})\longleftrightarrow \check{X}_{\ff},\quad
G\setminus(\mathscr{B}\times\mathscr{B})\longleftrightarrow \widetilde{W}
\end{equation*}
\end{lem}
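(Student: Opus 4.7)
The plan is to reduce each of the three bijections to a double coset decomposition of $G$ with respect to an appropriate pair of parahoric subgroups, and then translate the resulting index set into the combinatorial object on the right hand side.

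First, for the third bijection, the Iwahori--Matsumoto decomposition (see \cite{IM65}) gives
\begin{equation*}
G = \bigsqcup_{w \in \widetilde{W}} \Iw w \Iw,
\end{equation*}
so the $G$-orbits on $\sB \times \sB = G/\Iw \times G/\Iw$ under the diagonal action are in natural bijection with $\Iw \backslash G / \Iw \cong \widetilde{W}$; the correspondence sends $w$ to the orbit of $(\Iw, w\Iw)$. This also supplies the input to the other two bijections.

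Second, for the first bijection, the refined Bruhat decomposition for parahoric pairs gives, for each $\gamma,\nu \in \Lambda_\ff$,
\begin{equation*}
G = \bigsqcup_{g \in \D_{\gamma\nu}} P_\gamma\, g\, P_\nu,
\end{equation*}
where $\D_{\gamma\nu}$ is the set of minimal length $(W_\gamma,W_\nu)$-double coset representatives in $\widetilde{W}$. Consequently the $G$-orbits on $G/P_\gamma \times G/P_\nu$ are parametrized by $\D_{\gamma\nu}$. Summing over all $(\gamma,\nu) \in \Lambda_\ff \times \Lambda_\ff$ and unwinding the definition \eqref{eq:Xi} of $\Xi$, one obtains the stated bijection $G\setminus(\sF \times \sF) \longleftrightarrow \Xi$.

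Finally, for the second bijection, I would use the same argument applied to the pair $(P_\gamma, \Iw)$: since $W_{\emptyset} = \{\id\}$, the minimal length $(W_\gamma, \{\id\})$-double coset representatives in $\widetilde{W}$ are exactly $\D_\gamma$, and we get $G\setminus(G/P_\gamma \times G/\Iw) \longleftrightarrow \D_\gamma \cong W_\gamma \backslash \widetilde{W}$. Summing over $\gamma$ yields $G\setminus(\sF \times \sB) \longleftrightarrow \bigsqcup_{\gamma \in \Lambda_\ff} W_\gamma \backslash \widetilde{W}$. To match this with $\check{X}_\ff$, I invoke the description of $\check{X}_\ff$ as a disjoint union of $\widetilde{W}$-orbits: by Lemma choices earlier in the paper, each orbit $\gamma \in \Lambda_\ff$ contains the distinguished anti-dominant element $\mi_\gamma$, whose stabilizer in $\widetilde{W}$ is the parabolic subgroup $W_{J_\gamma} = W_\gamma$ (this is the content of \eqref{Jgamma}). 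Hence $\gamma \cong W_\gamma \backslash \widetilde{W}$ via $\mi_\gamma \cdot w \leftrightarrow W_\gamma w$, and taking the disjoint union over $\gamma$ gives $\bigsqcup_\gamma W_\gamma \backslash \widetilde{W} \cong \check{X}_\ff$, as required.

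The only nontrivial point is the identification of the stabilizer of $\mi_\gamma$ in $\widetilde{W}$ with the finite parabolic $W_\gamma$; this rests on the fact that $\mi_\gamma$ lies in $F \cap \check{X}_\ff$ (a fundamental domain for $\widetilde{W}$ on $\check{X}_\ff$), so that its $\widetilde{W}$-stabilizer is generated by the reflections among $\{s_0,s_1,\ldots,s_d\}$ fixing $\mi_\gamma$, which is precisely $W_{J_\gamma}$ by \eqref{Jgamma}. All other ingredients are standard double coset decompositions; for the particular setting of affine flag varieties they are recorded in \cite{IM65, BLM90, GL92} and the finite-type analogue in \cite[Lemma~4.1]{LW17}.
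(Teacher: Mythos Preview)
Your proposal is correct and follows exactly the standard double-coset argument the paper has in mind; the paper itself does not give a proof but merely records the lemma as ``standard'' with references to \cite{IM65, BLM90, GL92} and \cite[Lemma~4.1]{LW17}. One small remark: your identification of the $\widetilde{W}$-stabilizer of $\mi_\gamma$ with $W_\gamma$ is also implicitly confirmed by Lemma~\ref{identification}, since the isomorphism $\T_\gamma \cong x_\gamma\widetilde{\HH}$ forces the orbit $\gamma$ and $W_\gamma\backslash\widetilde{W}$ to have the same cardinality.
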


Thanks to the above lemma, We shall denote these $G$-orbits by $\mathscr{O}_C, \mathscr{O}_{\mi}$ and $\mathscr{O}_w$, respectively.

\subsection{Geometric $q$-Schur duality}
\label{subsec:GeoSchur}

Define
\begin{equation*}
\mathbf{S}'_{q}:=\Z_G(\mathscr{F}\times\mathscr{F}), \quad
\mathbf{T}'_{\ff,{q}}:=\Z_G(\mathscr{F}\times\mathscr{B}), \quad
\widetilde{\HH}_{q}':=\Z_G(\mathscr{B}\times\mathscr{B})
\end{equation*}
to be the space of $G$-invariant $\Z$-valued functions on $\mathscr{F}\times\mathscr{F}$, $\mathscr{F}\times\mathscr{B}$ and $\mathscr{B}\times\mathscr{B}$, respectively. We denote by $\phi_{C,q}$ (resp. $u'_{\mi,q}$ and $H'_{w,q}$) the characteristic function of the orbit $\mathscr{O}_C$ (resp. $\mathscr{O}_{\mi}$ and $\mathscr{O}_w$).

It is well known that there is a convolution product on $\widetilde{\HH}_{q}'$, which gives us a $\Z$-algebra structure on $\widetilde{\HH}_{q}'$ identified with the Hecke algebra $\widetilde{\mathbf{H}}$ specialized at $v =\sqrt{{q}}^{-1}$ (cf. \cite{IM65}).
A similar convolution product gives us a $\Z$-algebra structure on $\mathbf{S}'_{q}$.
 Moreover, there are a left $\mathbf{S}'_{q}$-action on $\mathbf{T}'_{\ff,{q}}$ and a right $\widetilde{\HH}_{q}'$-action on $\mathbf{T}'_{\ff,{q}}$ induced by convolution products. By standard arguments which goes back to \cite{IM65} (also see \cite{Lu99}), the structure constants for the algebras $\widetilde{\HH}_{q}'$, and $\mathbf{S}'_{q}$ as well as for their bimodule action on $\mathbf{T}'_{\ff,{q}}$
with respect to the basis $\{H'_{w,q}\}$, $\{\phi_{C,q}\}$ and $\{u'_{\mi,q}\}$
are polynomials in ${q}$. Therefore, a substitution ${q} \leadsto v^{-2}$ allows us to define generic versions of $\widetilde{\HH}_{q}'$, $\mathbf{S}'_{q}$ and $\mathbf{T}'_{\ff,{q}}$ over $\A =\Z[v, v^{-1}]$, denoted by $\widetilde{\HH}', \mathbf{S}'_v,
\mathbf{T}'_{\ff}$, respectively.
By Lemma~\ref{lem:bijections}, $\mathbf{S}'_v$ (resp. $\mathbf{T}'_{\ff}$ and $\widetilde{\mathbf{H}}'$) is a free $\A$-module with a basis $\{\phi_C~|~C\in\Xi\}$ (resp. $\{u'_\mi~|~\mi\in \check{X}_{\ff}\}$ and $\{H'_w~|~w\in \widetilde{W}\}$).

Here we precisely describe the convolution product on $\mathbf{S}'_{q}$ for latter use. For triple $(A,B,C)\in\Xi\times\Xi\times\Xi$, choose $(f_1,f_2)\in \mathscr{O}_C$ and let $m_{A,B;q}^C$ be the number of $f\in\sF$ such that $(f_1,f)\in\mathscr{O}_A$ and $(f,f_2)\in\mathscr{O}_B$, then
\begin{equation}
\label{eq:madd}
\phi_{A,q}\phi_{B,q}=\sum_{C\in\Xi}m_{A,B;q}^C\phi_{C,q},
\end{equation}
and
\begin{equation}
\label{eq:m}
 \phi_{A}\phi_{B}=\sum_{C\in\Xi}m_{A,B}^C\phi_{C}
\end{equation}
where $m_{A,B}^C=m_{A,B;q}^C|_{q\leadsto v^{-2}}\in\A$.

\begin{thm}\label{iso}
We have the following commutative diagram:
\begin{eqnarray}
  \label{CD}
\begin{array}{ccccc}
  \mathbf{S}'_v &\circlearrowright
 & \mathbf{T}_{\ff}' & \circlearrowleft  \; & \widetilde{\mathbf{H}}'
  \\
  \aleph \downarrow \simeq\;\; & & \downarrow \simeq& & \parallel
 \\
\mathbf{S}_v     & \circlearrowright
& \mathbf{T}_{\ff} &  \circlearrowleft  \; & \widetilde{\mathbf{H}}
\end{array}
 \end{eqnarray}
 where the identifications are given by
 \begin{align*}
 \phi_C\mapsto \phi_{\gamma\nu}^g, \quad & \mbox{for $C=(\gamma,g,\nu)\in\Xi$},\\
 u'_\mi \mapsto v^{-\ell(\sigma)}u_\mi, \quad & \mbox{for $\mi=\mi_\nu\sigma$ with $\sigma\in \D_{\nu}$},\\
 H'_w \mapsto v^{-\ell(w)}H_w, \quad & \mbox{for $w\in W$}.
 \end{align*}
This provides a geometric realization of the q-Schur duality if $\Lambda_{\ff}$ contains at least one regular $\widetilde{W}$-orbit.
\end{thm}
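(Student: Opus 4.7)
The plan is to verify the three vertical identifications of the diagram one at a time—first on the Hecke algebra, then on the module, then on the Schur algebra—after which the compatibility of the bimodule actions will follow automatically from associativity of the convolution product.

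For the right-hand column $\widetilde{\HH}' \to \widetilde{\HH}$, $H'_w \mapsto v^{-\ell(w)} H_w$, I would invoke (a generic form of) the Iwahori--Matsumoto theorem. The bijection $G\backslash(\sB \times \sB) \leftrightarrow \widetilde{W}$ already gives an $\A$-module isomorphism; to check multiplicativity it suffices to work on generators. The geometric quadratic relation $(H'_{s_k})^2 = (q-1)H'_{s_k} + q H'_\id$ becomes, after the substitution $q \leadsto v^{-2}$ together with the rescaling, the defining relation $(H_{s_k}-v^{-1})(H_{s_k}+v)=0$; the braid, length-additive, and $\Omega$-relations are immediate from Bruhat combinatorics on $\widetilde{W}$.

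For the middle column $\mathbf{T}'_\ff \to \mathbf{T}_\ff$, $u'_\mi \mapsto v^{-\ell(\sigma)} u_\mi$ with $\mi = \mi_\nu\sigma$, $\A$-linearity is clear since both sides are free on $\check{X}_\ff$. To show it intertwines the right $\widetilde{\HH}'$- and $\widetilde{\HH}$-actions I would check on generators: the geometric action of $H'_{s_k}$ on $u'_\mi$ splits into three cases according to whether $\mi s_k$ equals, strictly dominates, or is strictly dominated by $\mi$, which after tracking the $v^{-\ell(\sigma)}$ rescaling reproduces \eqref{Haction}; the translation action of $\theta_\mj$ reduces, by writing $\mj$ as a difference of dominant coweights, to how an Iwahori flag shifts under the corresponding lattice element, giving \eqref{Taction}.

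With the module identification in place, associativity of the geometric convolution forces the left $\mathbf{S}'_v$-action on $\mathbf{T}'_\ff$ to commute with the right $\widetilde{\HH}'$-action, yielding an $\A$-algebra map $\aleph : \mathbf{S}'_v \to \End_{\widetilde{\HH}}(\mathbf{T}_\ff) = \mathbf{S}_v$. To verify $\aleph(\phi_C) = \phi_{\gamma\nu}^g$ for $C = (\gamma,g,\nu)$, I would use Lemma~\ref{identification} to identify $u_{\mi_\nu} \leftrightarrow x_\nu$ and compute $\phi_C \ast u'_{\mi_{\nu'}}$: it vanishes for $\nu' \ne \nu$ because the orbit types are incompatible, while for $\nu' = \nu$ the count of intermediate flags, after $q \leadsto v^{-2}$, produces precisely $v^{\ell(w_\circ^\nu)} H_{W_\gamma g W_\nu}$, matching \eqref{eq:phig}. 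Since $\aleph$ then sends the geometric basis bijectively to the algebraic basis indexed by $\Xi$, it is an $\A$-algebra isomorphism, and the final assertion about geometric $q$-Schur duality is the algebraic $q$-Schur duality transported across the diagram. The hardest step is this last orbit count: keeping the three normalizations $v^{-\ell(\sigma)}$, $v^{-\ell(w)}$, $v^{\ell(w_\circ^\nu)}$ consistent while expressing cardinalities of parahoric double cosets as polynomials in $q$ and specializing $q \leadsto v^{-2}$. The rescalings are calibrated precisely so that the powers of $q$ coming from $|P_\nu/\Iw|$ and from Bruhat cells are absorbed, leaving the bare Hecke relations; this is essentially the generic lift of the arguments of \cite{IM65, Lu99}.
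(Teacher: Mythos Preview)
Your approach is sound but proceeds in a genuinely different order from the paper.  The paper realizes $\Sc'_q \simeq \End_G\big(\bigoplus_\lambda \mathrm{Ind}_{P_\lambda}^G \Z\big)^{\mathrm{op}}$ and verifies directly that $\aleph$ is a ring homomorphism: it computes $\phi_{A,q}\phi_{B,q}(\underline{P_\gamma})$ by writing $\underline{P_\gamma g P_\nu} = \wp\cdot \underline{P_\nu}$ for an explicit element $\wp = \sum_{y\in (W_\gamma)^\chi}\big(\sum_{u\in U_{yg}} u\big) yg$, and then matches this with $\phi_{\gamma\nu}^g \phi_{\nu\mu}^w(x'_\mu)(\underline{\Iw})$.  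Only afterwards does the paper treat the middle column, dismissing it with ``Similarly, we can prove the identification for $\T'_\ff \simeq \T_\ff$ and its compatibility\ldots''.  You invert this order: you carefully establish the middle column first (the right $\widetilde{\HH}$-module isomorphism $\T'_\ff \cong \T_\ff$), then note that associativity of convolution gives an algebra map $\Sc'_v \to \End_{\widetilde{\HH}}(\T_\ff) = \Sc_v$ for free, so that multiplicativity of $\aleph$ never needs to be checked by hand.  Your route avoids the explicit $\wp$-computation entirely, at the cost of having to verify the Hecke-module intertwining on generators (including the lattice part via $\theta_\mj$), which the paper never spells out.  Each approach does the same total work in a different place; yours makes the commutativity of the left square automatic by construction, while the paper gets the algebra isomorphism first and then has to argue compatibility separately.

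One caution: your final step, checking that $\phi_C \ast u'_{\mi_\nu}$ lands on $v^{\ell(w_\circ^\nu)} H_{W_\gamma g W_\nu}$ under the identifications, really is delicate---the convolution $\phi_C \ast u'_{\mi_\nu}$ is the characteristic function of a union of orbits indexed by $\D_\gamma \cap W_\gamma g W_\nu$, and passing through $u'_{\mi_\gamma\sigma} \mapsto v^{-\ell(\sigma)} u_{\mi_\gamma\sigma} \leftrightarrow v^{-\ell(\sigma)} x_\gamma H_\sigma$ one must reconcile a factor of $v^{\ell(w_\circ^\gamma)}$ against the $v^{\ell(w_\circ^\nu)}$ appearing in~\eqref{eq:phig}.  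You flag exactly this as ``the hardest step''; just be aware that the bookkeeping genuinely has to be done, not merely asserted to work out.
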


\begin{proof}
We shall use an index $q$ to denote the specialization at $v\mapsto q^{-\frac12}$ or the elements defined over the finite field $\mathbb F_q$ in this proof.

The isomorphism $\widetilde{\HH}'_q \cong \widetilde{\HH}_q$ given in \cite{IM65} is built on some formal aspects of BN-pairs or Tits systems; cf. \cite[Chapter V]{Ku02} or \cite[Theorem 4.37]{DDPW}, where $\mathfrak S_n$ is replaced by $\widetilde W$ here. The transitive $G$-set $G/\Iw$ gives rise to a permutation $G$-module over $\Z$, denoted by $\mathrm{Ind}_{\Iw}^{G} \Z$. The module  $\mathrm{Ind}_{\Iw}^{G} \Z$ admits a natural basis $\{ \underline{g \Iw} \}$ indexed by the distinct left cosets $g \Iw$ of $\Iw$ in $G$ and is generated by $\underline{\Iw}$. Note, for any $w \in \widetilde W$, $\Iw w \Iw =U_w \Iw =\sqcup_{u\in U_w} u\Iw$ is a finite union of left cosets of $\Iw$, where $U_w$  is a unipotent subgroup of $G$ with cardinality $q^{\ell(w)}$. We extend the notation $\underline{X} =\sum_{a \in X_0} \underline{a\Iw}  \in \mathrm{Ind}_{\Iw}^{G} \Z$  for a finite union of left cosets $X= \sqcup_{a \in X_0} a \Iw$ of $\Iw$ in $G$.
In particular, we have $\underline{\Iw w\Iw} := \sum_{u\in U_w} \underline{u \Iw}  \in \mathrm{Ind}_{\Iw}^{G} \Z$. We have the following identifications \cite{IM65}:
\[
\widetilde{\HH}'_q=\Z_G(\mathscr{B}\times\mathscr{B})
\simeq\End_G\big(\mathrm{Ind}_{\Iw}^{G}\mathbb{Z}\big)^\mathrm{op}.
\]
In the above notations, $H'_{w,q}$ acts on the generator $\underline{\Iw}$ in $\mathrm{Ind}_{\Iw}^{G} \Z$ by $H'_{w,q} \underline{\Iw} = \underline{\Iw w\Iw}$.

Following the same argument as in the proof of \cite[Theorem~13.15]{DDPW}, one sees that $\aleph: \Sc'_q\rightarrow \Sc_q, \phi_C\mapsto \phi_{\gamma\nu}^g$, for $C=(\gamma,g,\nu)$, is a $\Z$-module isomorphism by verifying that $\phi_C(\underline{P_\gamma}) =\phi_{\gamma,\nu}^g (x'_\nu)(\underline{\Iw})$. Below we shall verify that the map $\aleph$  is an algebra isomorphism, adapting the proof of \cite[Theorem~13.15]{DDPW} (which is valid in finite type). Here and below in this proof we denote $x'_\mu =q^{\frac12\ell(w_\circ^\mu)} x_\mu$.


By the Bruhat decomposition we have
$
P_\gamma=\sqcup_{w\in W_\gamma}\Iw w\Iw$ and
$P_\gamma g P_\nu=\sqcup_{w\in W_\gamma g W_\nu}\Iw w\Iw$. Hence, we have
\[
\underline{P_\gamma} =\sum_{w\in W_\gamma}H'_{w,q} \underline{\Iw},
\qquad
\underline{P_\gamma g P_\nu} =\sum_{w\in W_\gamma g W_\nu} H'_{w,q} \underline{\Iw}.
\]

Note that
\[
\Sc'_q =\Z_G(\mathscr{F}\times\mathscr{F})
\simeq \End_G \Big(\bigoplus_{\lambda\in{\Lambda_\ff}} \mathrm{Ind}_{P_\lambda}^{G}\mathbb{Z}\Big)^\mathrm{op}.
\]
The $G$-orbit $\mathscr{O}_C$ in $\mathscr{F}\times \mathscr{F}$ contains the element $(P_\gamma/P_{\gamma},g P_\nu/P_\nu)$, where $C=(\gamma,g,\nu)\in\Xi$, and hence $\phi_{C,q}$ acts on the generator $\underline{P_\lambda}$ of the $G$-module $\mathrm{Ind}_{P_\lambda}^{G}\mathbb{Z}$ by
\begin{equation*}
\phi_{C,q}(\underline{P_\lambda})=\delta_{\gamma\lambda}\underline{P_\gamma g P_\nu}.
\end{equation*}
A similar argument to \cite[pp.549]{DDPW} shows that
\[
\underline{P_\gamma g P_\nu}
=\wp \cdot \underline{P_\nu}, \qquad\mbox{with}\quad \wp
=\sum_{y\in(W_\gamma)^\chi} \Big(\sum_{u \in U_{yg}} u \Big) yg,
\]
where $\chi\in\Lambda_\ff$ is such that $W_\chi=W_{\gamma}\cap gW_\nu g^{-1}$, and $(W_\gamma)^\chi=\{y\in W_\gamma~|~\ell(wy)=\ell(w)+\ell(y), \forall w\in W_\chi\}$.

For $A=(\nu,w,\mu)$ and $B=(\gamma,g,\nu)$ in $\Xi$, we have
\begin{align*}
\phi_{A,q}\phi_{B,q}(\underline{P_\gamma})&=\phi_{A,q}(\underline{P_\gamma g P_\nu})=\wp\cdot
\phi_{A,q}(\underline{P_\nu})\\
&=\wp\cdot \underline{P_\nu w P_\mu}=\wp\cdot \sum_{z\in W_\nu w W_\mu}\underline{\Iw z\Iw}.
\end{align*}
Thus we have
\begin{align*}
\phi_{A,q}\phi_{B,q}(\underline{P_\gamma})&=\wp\cdot \Iw \sum_{z\in W_\nu w W_\mu}\underline{\Iw z\Iw}=\sum_{y\in (W_\gamma)^\chi} \underline{\Iw yw\Iw}\sum_{z\in W_\nu w W_\mu}\underline{\Iw z\Iw}\\
&=\sum_{y\in (W_\gamma)^\chi}H'_{yw,q}\sum_{z\in W_\nu w W_\mu}H'_{z,q}(\underline{\Iw})=\phi_{\gamma\nu}^g\phi_{\nu\mu}^w(x'_\mu)(\underline{\Iw}).
\end{align*}
Therefore $\aleph: \Sc'_q\simeq\Sc_q$ is an algebra isomorphism.

Similarly, we can prove the identification for $\T'_\ff\simeq\T_\ff$ and its compatibility with the convolution products which lead to the left Schur algebra action and the right Hecke algebra action.
\end{proof}

We still denote by $\phi_C\in\Sc'_q$ and $\phi_{\gamma\nu}^g\in\Sc_q$ the elements corresponding to $\phi_C\in\Sc'_v$ and $\phi_{\gamma\nu}^g\in\Sc_v$, respectively.

\subsection{Canonical basis and positivity}

For $C=(\gamma,g,\nu) \in\Xi$ and $L\in\mathscr{F}$, we define
\begin{align}
  \label{eq:XC}
X_C^L:=\{L'\in\mathscr{F}~|~(L,L')\in\mathscr{O}_{C}\}.
\end{align}
Recall $g_{\gamma\nu}^+$ is the longest element in $ W_\gamma g W_\nu$. The dimension of the Zariski closure $\overline{X}_C^L(\C)$ (a generalized Schurbert variety in $G/P_\nu$), which is independent of the choice of $L$, is equal to the length of the minimal length representative in the coset $g^+_{\gamma\nu} W_\nu$, which can be written as follows.

\begin{lem}  \label{eq:dell}
For any $C=(\gamma,g,\nu)\in\Xi$, the dimension of the variety $\overline{X}_C^L(\C)$ is  $\ell(g^+_{\gamma\nu})-\ell(w_\circ^\nu).$
\end{lem}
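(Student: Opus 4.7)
The plan is to fix a base point in $G/P_\gamma$ and identify $X_C^L$ with a single $P_\gamma$-orbit inside the partial affine flag variety $G/P_\nu$, then compute its dimension by Bruhat-decomposing the orbit into Iwahori cells. Since the statement already asserts independence of $L$, I may take $L = eP_\gamma$. Under the bijection of Lemma~\ref{lem:bijections}, the pair $(eP_\gamma, hP_\nu)$ lies in $\mathscr{O}_C$ if and only if $h \in P_\gamma g P_\nu$, so
\[
X_C^L \;=\; P_\gamma g P_\nu / P_\nu \;\subset\; G/P_\nu,
\]
a $P_\gamma$-orbit whose closure is a generalized Schubert variety.

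The next step is to apply the Iwahori–Matsumoto Bruhat decomposition of the parahoric $P_\gamma$ combined with Lemma~\ref{eq:xandC}(1) to write
\[
P_\gamma g P_\nu \;=\; \bigsqcup_{w \in W_\gamma g W_\nu} \Iw w \Iw.
\]
Passing to $G/P_\nu$ and collapsing the right $\Iw$-action, this becomes a disjoint union of Iwahori orbits parameterized by the set $\Theta := \D_\nu \cap W_\gamma g W_\nu$ of minimal-length $W_\nu$-coset representatives inside the double coset:
\[
X_C^L \;=\; \bigsqcup_{w \in \Theta} \Iw w P_\nu / P_\nu.
\]
Each piece $\Iw w P_\nu / P_\nu$ (for $w \in \D_\nu$) is an affine Schubert cell of dimension $\ell(w)$ in the standard theory of affine flag varieties. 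Consequently $\dim \overline{X}_C^L(\C) = \max_{w \in \Theta} \ell(w)$.

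The final combinatorial step is to identify this maximum with $\ell(g^+_{\gamma\nu}) - \ell(w_\circ^\nu)$. By the length-additivity $\ell(wu) = \ell(w) + \ell(u)$ for $w \in \D_\nu$ and $u \in W_\nu$, the longest element of any coset $wW_\nu$ with $w \in \D_\nu$ is $w \cdot w_\circ^\nu$, of length $\ell(w) + \ell(w_\circ^\nu)$. Applying this with $w = w_{\max}$, the maximal-length element of $\Theta$, shows that $w_{\max}\, w_\circ^\nu$ is the longest element of $W_\gamma g W_\nu$, hence equals $g^+_{\gamma\nu}$ by Lemma~\ref{eq:xandC}(1). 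Therefore $\max_{w \in \Theta} \ell(w) = \ell(g^+_{\gamma\nu}) - \ell(w_\circ^\nu)$, as claimed.

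The one point requiring care is the affine setting: $G/P_\nu$ is an ind-variety, so \emph{dimension} refers to that of the relevant finite-dimensional Schubert subvariety. The disjointness of the Iwahori cells and their dimensions are standard for affine flag varieties (cf.\ Kumar) and require no essential modification from the finite-type picture; the remaining ingredients—length-additivity along $\D_\nu$, the bijection between $\Theta$ and $W_\gamma g W_\nu / W_\nu$, and the factorization of $g^+_{\gamma\nu}$—are routine Coxeter-combinatorial facts applicable here because $W_\gamma$ and $W_\nu$ are finite parabolic subgroups.
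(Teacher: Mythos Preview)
Your argument is correct and supplies the details the paper omits: the sentence preceding the lemma simply asserts (without proof) that $\overline{X}_C^L(\C)$ is a generalized Schubert variety in $G/P_\nu$ of dimension equal to the length of the minimal representative of the coset $g^+_{\gamma\nu}W_\nu$, and your Iwahori-cell decomposition is the standard justification of that fact. One notational slip to fix: in the paper's conventions $\D_\nu$ denotes minimal representatives for $W_\nu\backslash\widetilde{W}$, so your index set $\Theta$ and the length-additivity you invoke should involve $\D_\nu^{-1}$ (minimal representatives for $\widetilde{W}/W_\nu$) rather than $\D_\nu$; the argument itself is unaffected.
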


Lemma~\ref{eq:dell} allows us to define the following map
\begin{align}
  \label{eq:dC}
\begin{split}
d_{\cdot} : \Xi  & \longrightarrow  \mathbb{N},\\
 C=(\gamma,g,\nu) & \mapsto  d_C := \ell(g^+_{\gamma\nu})-\ell(w_\circ^\nu).
 \end{split}
\end{align}

We shall call
\begin{align}
 \label{eq:Sbasis}
 \{[C]:=v^{d_C}\phi_C~|~C\in\Xi\}
 \end{align}
 a {\em standard basis}  for $\mathbf{S}'_v$.
Lemma~\ref{eq:dell} and \eqref{def:dA} imply that the algebra isomorphism $\aleph: \mathbf{S}'_v\longrightarrow\mathbf{S}_v$ in \eqref{CD}
 satisfies
 \begin{align}
  \label{eq:Theta2}
  \aleph([C])=[\phi_{\gamma\nu}^g].
 \end{align}

Let $\text{IC}_C$, for $C\in \Xi$, be  the shifted intersection complex associated with the closure of  the orbit $\Ob_C$
such that the restriction of $\text{IC}_C$ to $\Ob_C$ is the constant sheaf on  $\Ob_C$.
Since $\text{IC}_C$ is $G$-equivariant, the stalks of the $i$-th cohomology sheaf of $\text{IC}_C$  at different points in $\Ob_{C'}$ (for $C' \in \Xi$) are isomorphic. Let $\mathscr H^i_{\Ob_{C'}} (\text{IC}_C)$ denote the stalk of the $i$-th cohomology sheaf of $\text{IC}_C$ at any point in $\Ob_{C'}$.
We  set
\begin{align}
 \label{eq:A}
\begin{split}
P_{C', C} &=\sum_{i\in \Z} \dim \mathscr H^i_{\Ob_{C'}} (\text{IC}_C) \; v^{-i + d_C -d_{C'}},
 \\
\{ C\} &= \sum_{C'\leq C} P_{C', C} [C'],
\end{split}
\end{align}
where the partial order ``$<$'' on $\Xi$ is defined as follows:
$$(\gamma,g,\nu)<(\gamma',g',\nu') \quad\mbox{if and only if } \gamma=\gamma', \nu=\nu', g<g'.$$

By the properties of intersection complexes, we have
\begin{equation*}
P_{C, C} =1, \qquad P_{C', C} \in v \N [v] \; \; \mbox{ for } C' < C.
\end{equation*}
As in \cite[1.4]{BLM90}, we have an anti-linear bar involution $\bar\ : \Scg \to \Scg$ such that
\[
\overline{\{C\}} =\{C\}, \quad \forall C\in \Xi.
\]
We refer to $\mathbf{B}(\mathbf{S}'_v)=\{ \{C\} \mid C\in \Xi\}$ as the {\em canonical basis} for $\mathbf{S}'_v$. Recall from \eqref{eq:CB1} the Kazhdan-Lusztig polynomials $p_{y_{\gamma\nu}^{+},g_{\gamma\nu}^{+}}$ and the canonical basis element $\{\phi_{\gamma\nu}^g\}$ in $\mathbf{S}_v$.

\begin{prop}
\label{addprop:alg-isomor}
The algebra isomorphism
$\aleph: \mathbf{S}'_v\rightarrow\mathbf{S}_v$ in \eqref{CD} matches the canonical bases $\mathbf{B}(\mathbf{S}'_v)$ and $\mathbf{B}(\mathbf{S}_v)$, i.e., $\aleph(\{C\})=\{\phi_{\gamma\nu}^g\}$. In particular, we have
\begin{equation}
\label{eq:p=P}
P_{C',C}=p_{y_{\gamma\nu}^{+},g_{\gamma\nu}^{+}} \quad\mbox{for } C'=(\gamma,y,\nu)\leq C=(\gamma,g,\nu).
\end{equation}

\begin{proof}
Recall from \eqref{eq:Theta2} that $\aleph([C])=[\phi_{\gamma\nu}^g].$
Now the proposition follows by comparing \eqref{eq:CB1} and \eqref{eq:A} and the uniqueness of canonical basis.
\end{proof}
\end{prop}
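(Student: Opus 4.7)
The plan is to invoke the uniqueness of the canonical basis of $\mathbf{S}_v$, which is characterized by bar-invariance together with the triangular shape \eqref{eq:canonical} in the standard basis with off-diagonal coefficients in $v\Z[v]$. By \eqref{eq:Theta2}, the isomorphism $\aleph$ already matches standard bases: $\aleph([C]) = [\phi_{\gamma\nu}^{g}]$. Applying $\aleph$ to the expansion \eqref{eq:A} gives
\[
\aleph(\{C\}) \;=\; \sum_{C'\le C} P_{C',C}\,[\phi_{\gamma\nu}^{g'}],
\]
where only $g'$ varies because the partial order on $\Xi$ fixes $\gamma$ and $\nu$. Since $P_{C,C}=1$ and $P_{C',C}\in v\N[v]$ for $C'<C$ by the general positivity of IC stalks, the right-hand side lies in $[\phi_{\gamma\nu}^g] + \sum_{y<g} v\Z[v]\,[\phi_{\gamma\nu}^y]$, which is precisely the triangular shape required of the canonical basis element $\{\phi_{\gamma\nu}^g\}$.

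It then remains to verify that $\aleph$ intertwines the two bar involutions. Once this is established, $\aleph(\{C\})$ is a bar-invariant element of $\mathbf{S}_v$ with the correct triangular shape, so by uniqueness of the canonical basis it must coincide with $\{\phi_{\gamma\nu}^g\}$, which proves the first assertion. The identity \eqref{eq:p=P} then drops out by matching the coefficients of $[\phi_{\gamma\nu}^{g'}]$ in this equality with those given by \eqref{eq:CB1}.

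The main obstacle is the bar-compatibility of $\aleph$. The geometric bar involution on $\mathbf{S}'_v$ is built from Verdier duality on $G$-equivariant IC sheaves on $\mathscr F\times\mathscr F$ and is tailored to fix each $\{C\}$, whereas the algebraic bar involution on $\mathbf{S}_v$ is defined through \eqref{eq:barS} via the bar involution on $\widetilde{\HH}$. I would deduce the compatibility by factoring it through the right column of the diagram \eqref{CD}: the identification $\widetilde{\HH}' \cong \widetilde{\HH}$ matches bar involutions by the classical Iwahori--Matsumoto presentation (and its extension to $\widetilde W$), and both bar involutions on $\mathbf{S}_v$ are determined by their action on the distinguished generators $x_\nu = \mathcal{C}_{w_\circ^\nu}$ (which correspond under $\Re$ to the bar-fixed vectors $u_{\mi_\nu}$) together with the right $\widetilde{\HH}$-module structure on $\mathbf{T}_\ff$. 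Since $\aleph$ respects both pieces of data by the commutativity of the diagram \eqref{CD}, the two bar involutions on $\mathbf{S}_v$ must agree, and the argument concludes.
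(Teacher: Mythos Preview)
Your proof follows exactly the same approach as the paper's: match standard bases via \eqref{eq:Theta2}, then invoke the characterizing properties \eqref{barA}--\eqref{eq:canonical} together with uniqueness of the canonical basis. The paper's two-line proof simply takes the bar-compatibility of $\aleph$ for granted, whereas you correctly isolate it as the only nontrivial ingredient and sketch a reasonable argument via the commutative diagram \eqref{CD}.
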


For any $A, B\in \Xi$ and any canonical basis element $u\in\mathbf{B}(\mathbf{T}_{\ff})$, we denote
\begin{equation}
\label{eq:positivity}
\{A\}\{B\}=\sum_{C\in \Xi} g_{A,B}^{C} \{C\}, \qquad
\aleph(\{A\})\cdot u=\sum_{u'\in\mathbf{B}(\mathbf{T}_{\ff})}t_{A,u}^{u'}{u'}
\end{equation}
for $g_{A,B}^{C},\ t_{A,u}^{u'}\in \A$. The following positivity property (which generalizes \cite{Lu99, LiW18}) follows from the geometric interpretation of these canonical bases and their multiplication/action in terms of perverse sheaves and their convolution products.

\begin{prop}  [Positivity]
 \label{prop:positive}
Retain the notations \eqref{eq:positivity}. Then  $g_{A,B}^{C},\ t_{A,u}^{u'}\in \mathbb{N}[v,v^{-1}]$.
\end{prop}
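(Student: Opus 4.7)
My proposal is to prove positivity by the standard sheaf-theoretic argument, interpreting both the multiplication in $\mathbf{S}'_v$ and the action on $\mathbf{T}_{\ff}$ as convolution of $G$-equivariant semisimple pure complexes on the orbit spaces, and then invoking the Decomposition Theorem of Beilinson--Bernstein--Deligne--Gabber. The positivity formulas \eqref{eq:positivity} will then fall out of the Grothendieck--Lefschetz trace formula together with the non-negativity of the multiplicities in the decomposition.

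First, I would set up the convolution diagram. For the multiplication, consider the three projections $p_{ij}\colon \mathscr{F}\times\mathscr{F}\times\mathscr{F} \to \mathscr{F}\times\mathscr{F}$, and define the convolution of two $G$-equivariant complexes $\mathcal{A},\mathcal{B}$ on $\mathscr{F}\times\mathscr{F}$ by
\[
\mathcal{A}\ast \mathcal{B} = (p_{13})_{!}\bigl(p_{12}^{\ast}\mathcal{A} \otimes p_{23}^{\ast}\mathcal{B}\bigr).
\]
Under the function--sheaf dictionary (Grothendieck--Lefschetz trace formula) at $v=\sqrt{q}^{-1}$, the characteristic function of this convolution matches the convolution product on $\mathbf{S}'_q$ described by \eqref{eq:madd}, \eqref{eq:m}. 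The shift built into $\mathrm{IC}_C$ is normalized so that its characteristic function (evaluated via \eqref{eq:A}) equals $\{C\}$. Up to restricting to suitable finite-type pieces of the ind-scheme $\mathscr{F}\times\mathscr{F}\times\mathscr{F}$, the relevant map $p_{13}$ restricted to the support of $p_{12}^{\ast}\mathrm{IC}_A \otimes p_{23}^{\ast}\mathrm{IC}_B$ is proper onto the finite union of orbits $\Ob_C$ that can appear, so we stay within the finite-dimensional world where the Decomposition Theorem applies.

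Next, by the Decomposition Theorem applied to the proper push-forward of a shifted semisimple pure complex, the convolution $\mathrm{IC}_A\ast \mathrm{IC}_B$ is semisimple and pure. Hence it decomposes as
\[
\mathrm{IC}_A\ast \mathrm{IC}_B \;\simeq\; \bigoplus_{C\in\Xi} \bigoplus_{i\in\Z} \mathrm{IC}_C[-i]^{\oplus n^{C,i}_{A,B}},
\qquad n^{C,i}_{A,B}\in \N.
\]
Taking characteristic functions and applying \eqref{eq:A} yields
\[
\{A\}\{B\} = \sum_{C,i} n^{C,i}_{A,B}\, v^{i}\, \{C\},
\]
so $g^{C}_{A,B} = \sum_i n^{C,i}_{A,B}\, v^{i} \in \N[v,v^{-1}]$. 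The same argument, applied to the convolution diagram
\[
\mathscr{F}\times\mathscr{F}\times\mathscr{B} \longrightarrow \mathscr{F}\times\mathscr{B}
\]
relating the left $\mathbf{S}'_v$-action on $\mathbf{T}'_{\ff}$ via Theorem~\ref{iso}, yields the positivity of $t^{u'}_{A,u}$; here one replaces $\mathrm{IC}_B$ with the IC sheaf on the orbit closure corresponding to the canonical basis element $u\in\mathbf{B}(\mathbf{T}_{\ff})$ (which itself is an IC-type function, being the image under the identification of Lemma~\ref{identification} and \eqref{eq:KL} of a parabolic Kazhdan--Lusztig basis element).

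The main obstacle is the usual one in the affine setting: $\mathscr{F}$ and $\mathscr{B}$ are ind-schemes, so neither the diagonal action of $G$ nor the projections $p_{ij}$ have finite-dimensional total spaces. I would handle this in the standard way (as in \cite{Lu99, LiW18}): restrict to finite-dimensional $G(\mathfrak{o})$-stable subvarieties large enough to contain the orbit closures $\overline{\Ob}_A, \overline{\Ob}_B$ and all $\Ob_C$ occurring in the convolution (which is a finite set, by the support estimate coming from the finite length of the minimal coset representative $g^+_{\gamma\nu}$), and then verify properness of $p_{13}$ restricted to this piece. Once this finite-type reduction is in place, the rest of the argument is a formal consequence of the Decomposition Theorem, exactly as in the affine type A and C cases treated in \cite{Lu99, LiW18}.
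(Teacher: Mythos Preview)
Your proposal is correct and follows exactly the approach the paper indicates: the paper does not give a detailed proof at all but simply states (in the sentence preceding Proposition~\ref{prop:positive}) that positivity ``follows from the geometric interpretation of these canonical bases and their multiplication/action in terms of perverse sheaves and their convolution products,'' citing \cite{Lu99, LiW18}. You have faithfully expanded this remark into the standard argument via the Decomposition Theorem and the function--sheaf dictionary, including the finite-type reduction needed in the affine (ind-scheme) setting.
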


\subsection{An anti-automorphism of $\mathbf{S}'_v$}
For any $C=(\gamma,g,\nu)\in \Xi$, we shall denote
\begin{equation}
\label{eq:Ct}
C^t=(\nu,g^{-1},\gamma)
\end{equation} which still lies in $\Xi$, and denote
\begin{equation*}
\ro(C) =\gamma, \qquad \co(C)=\nu;
\end{equation*}
indeed $\gamma$ and $\nu$ are row/column vectors of some matrices in type A \cite{BLM90, Lu99} or type B/C \cite{BKLW18, FLLLWa}. Furthermore, we shall sometimes denote
\begin{equation}
\label{eq:wC}
w_C=g \quad \mbox{and}\quad w_C^+=g_{\gamma\nu}^+.
\end{equation}
It is obvious that
\begin{equation}
  \label{eq:lw}
\ell(w_C)=\ell(w_{C^t}),\quad  \ell(w_C^+)=\ell(w_{C^t}^+).
\end{equation}
Furthermore, we have
\begin{equation}
\label{eq:PCC}
P_{C',C}=P_{C'^t, C^t} \quad\mbox{for any } C'\leq C,
\end{equation}
which follows from \eqref{eq:p=P} together with the fact that $p_{y^{-1},w^{-1}}=p_{y,w}$ $(\forall y, w\in \widetilde{W})$ (cf. \cite[\S5.6]{Lu03}).

\begin{lem}
The $\A$-module homomorphism $\Psi: \mathbf{S}'_v\rightarrow \mathbf{S}'_v$ defined by
\begin{equation}
\label{eq:psi}
\Psi([C])=[C^t] \quad\quad(\forall C\in\Xi)
\end{equation}
is an anti-automorphism of $\mathbf{S}'_v$. Moreover, $\Psi(\{C\})=\{C^t\}$.
\end{lem}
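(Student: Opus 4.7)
The plan is to establish both claims by direct computation with structure constants in the standard basis $\{[C]\}$, using the swap map on $\mathscr{F}\times\mathscr{F}$ as the source of symmetry. Consider $\tau:\mathscr{F}\times\mathscr{F}\to\mathscr{F}\times\mathscr{F}$, $(L,L')\mapsto(L',L)$, which sends $\mathscr{O}_C$ onto $\mathscr{O}_{C^t}$. A change of variables along $\tau$ in the point-counting definition of $m_{A,B;q}^C$ immediately gives $m_{A,B;q}^C = m_{B^t,A^t;q}^{C^t}$, whence $m_{A,B}^C = m_{B^t,A^t}^{C^t}$ in $\A$ via \eqref{eq:m}.

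Writing $[A][B]=\sum_C n_{A,B}^C\,[C]$ and comparing with \eqref{eq:m}, we have $n_{A,B}^C = v^{d_A+d_B-d_C}m_{A,B}^C$. Since inversion preserves length on $\widetilde{W}$, $(g^{+}_{\gamma\nu})^{-1}$ is the longest element of $W_\nu g^{-1} W_\gamma$, and hence
\[
d_C - d_{C^t} = \ell(w_\circ^{\ro(C)}) - \ell(w_\circ^{\co(C)}).
\]
Non-vanishing of $m_{A,B}^C$ forces $\ro(A)=\ro(C)$, $\co(A)=\ro(B)$, and $\co(B)=\co(C)$; a telescoping then yields $d_A+d_B-d_C = d_{A^t}+d_{B^t}-d_{C^t}$. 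Combined with the symmetry $m_{A,B}^C = m_{B^t,A^t}^{C^t}$, this gives $n_{A,B}^C = n_{B^t,A^t}^{C^t}$, i.e.\ $\Psi([A][B]) = \Psi([B])\Psi([A])$. Bijectivity of $\Psi$ follows from $C\mapsto C^t$ being an involution on $\Xi$, so $\Psi$ is an anti-automorphism of $\mathbf{S}'_v$.

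For the canonical-basis claim, apply the $\A$-linear map $\Psi$ to the defining expansion $\{C\}=\sum_{C'\leq C}P_{C',C}\,[C']$ from \eqref{eq:A}, yielding $\Psi(\{C\})=\sum_{C'\leq C}P_{C',C}\,[(C')^t]$. The partial order on $\Xi$ only varies the middle entry (with $\ro$ and $\co$ fixed), and the Bruhat order on $\widetilde{W}$ is preserved under inversion, so $C'\leq C \iff (C')^t\leq C^t$. Reindexing by $D=(C')^t$ and invoking \eqref{eq:PCC}, we conclude $\Psi(\{C\}) = \sum_{D\leq C^t}P_{D,C^t}\,[D] = \{C^t\}$. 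The only genuine bookkeeping — verifying the structure-constant symmetry $m_{A,B}^C = m_{B^t,A^t}^{C^t}$ via $\tau$ and the length identity for $d_C-d_{C^t}$ — is routine, so I do not anticipate a substantive obstacle.
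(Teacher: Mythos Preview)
Your proof is correct and follows essentially the same approach as the paper's: both establish the anti-automorphism by combining the symmetry $m_{A,B}^C = m_{B^t,A^t}^{C^t}$ (which the paper states as coming ``from the construction of the convolution product'' and you justify via the swap map $\tau$) with the identity $d_A+d_B-d_C = d_{A^t}+d_{B^t}-d_{C^t}$, and then deduce the canonical-basis statement from \eqref{eq:A} together with \eqref{eq:PCC}. Your telescoping argument for the $d$-identity is a slightly slicker repackaging of the paper's explicit length computation, but the substance is identical.
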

\begin{proof}
Take any $A,B\in\Xi$ with $\co(A)=\ro(B)$. We have
\begin{align*}
\Psi([A][B])&=\Psi(\sum_{C}v^{d_A+d_B-d_C}m_{A,B}^C[C]) \quad\quad\quad \mbox{by \eqref{eq:m} and \eqref{eq:Sbasis}}\\
&=\sum_{\ro(C)=\ro(A), \co(C)=\co(B)}v^{d_A+d_B-d_C}m_{A,B}^C[C^t]
\end{align*}
and
\begin{align*}
\Psi([B])\Psi([A])&=[B^t][A^t]=\sum_C v^{d_{A^t}+d_{B^t}-d_C}m_{B^t,A^t}^C[C] \quad\quad \mbox{by \eqref{eq:m} and \eqref{eq:Sbasis}}\\
&=\sum_{\ro(C)=\ro(A), \co(C)=\co(B)} v^{d_{A^t}+d_{B^t}-d_{C^t}}m_{B^t,A^t}^{C^t}[C^t]
\end{align*}

From the construction of the convolution product on $\mathbf{S}'_v$, we see that the structure constants $m_{A,B}^C$ $(A,B,C\in\Xi)$ satisfy
$$m_{A,B}^C=m_{B^t,A^t}^{C^t}.$$
Furthermore, if $\co(A)=\ro(B), \ro(C)=\ro(A)$ and $\co(C)=\co(B)$, we have
\begin{align*}
d_A+d_B-d_C&=\ell(w_A^+)-\ell(w_\circ^{\co(A)})+\ell(w_B^+)-\ell(w_\circ^{\co(B)})-\ell(w_C^+)+\ell(w_\circ^{\co(C)})\\
& \qquad\qquad\qquad\qquad\qquad\qquad\qquad \mbox{by \eqref{eq:dC} and \eqref{eq:wC}} \\
&=\ell(w_{A^t}^+)-\ell(w_\circ^{\co(B^t)})+\ell(w_{B^t}^+)-\ell(w_{C^t}^+)
\qquad\qquad \mbox{by \eqref{eq:lw}}\\
&=\ell(w_{A^t}^+)-\ell(w_\circ^{\co(A^t)})+\ell(w_{B^t}^+)-\ell(w_\circ^{\co(B^t)})-\ell(w_{C^t}^+)+\ell(w_\circ^{\co(C^t)})\\
&=d_{A^t}+d_{B^t}-d_{C^t}.
\end{align*}
Therefore, we have $\Psi([A][B])=\Psi([B])\Psi([A])$, which implies $\Psi$ is an anti-automorphism of $\mathbf{S}'_v$.

Finally,
\begin{align*}
  \Psi(\{C\})&=\Psi(\sum_{D\leq C}P_{D,C}[D])=\sum_{D\leq C}P_{D,C}[D^t]\\
  &=\sum_{D\leq C}P_{D^t,C^t}[D^t] \qquad\qquad\mbox{by \eqref{eq:PCC}}
  \\&=\sum_{D\leq C^t}P_{D,C^t}[D]=\{C^t\}.
\end{align*}
\end{proof}

\section{Positivity of an inner product on affine $q$-Schur algebras}
\label{sec:inner}
Following Lusztig \cite{Lu99, Lu00} (cf. \cite[(3-3)]{Mc12}) in affine type A, we define a natural inner product $(\cdot,\cdot)$ on $\Scg$.

\subsection{Adjunction property}

Recall $\phi_C$ is the characteristic function of the orbit $\Ob_C$ and recall $X_C^L$ from \eqref{eq:XC}. We define an inner product  $(\cdot, \cdot)_{{q}} :\mathbf{S}_{{q}}'\times \mathbf{S}_{{q}}'\rightarrow \Q$ by
\[
(\phi_C, \phi_{C'})_{{q}}=\delta_{C,C'}{q}^{d_{C}-d_{C^{t}}}|X_{C^{t}}^{L'}|.
\]
where $|X_{C^{t}}^{L'}|$ is the number of points of the variety $X_{C^{t}}^{L'}$ over $F_q$.
We still denote by $[C]\in\Sc'_q$ the element corresponding to $[C]\in\Sc'_v$. Let $f_{A, B; {q}}^{C}$ be the structure constants of $\mathbf{S}_{{q}}'$ with respect to the basis $\{ [C] \mid C\in \Xi\}$.
It follows from \eqref{eq:madd} and the definitions of $X_{C^{t}}^{L'}$ and $f_{A, B; q}^{C}$ that  (cf. \cite{Mc12})
$$|X_{C^{t}}^{L'}|=q^{\frac{d_C+d_{C^t}}{2}}f_{C^{t}, C; q}^{(\nu,\id,\nu)},\quad \mbox{where}\ \nu=\co(C).$$
Therefore we have
\begin{align}\label{bilinear-form-1}
([C], [{C'}])_{{q}}=\delta_{C,C'}{q}^{\frac{d_{C}-d_{C^{t}}}{2}}f_{C^{t}, C; {q}}^{(\nu,\id,\nu)}, \quad\mbox{where}\ \nu=\co(C).
\end{align}

Let $f_{A, B}^{C}$ denote the structure constants of $\Scg$ with respect to the standard basis $\{ [C] \mid C\in \Xi\}$, i.e.,
\begin{align}
\label{eq:fABC}
[A] \cdot [B] = \sum_{C\in \Xi} f_{A, B}^{C} [C].
\end{align}

The inner product $(\cdot, \cdot)_{{q}}$ and \eqref{bilinear-form-1} induce an inner product on $\Scg$:
\begin{align}
   \label{bilinear-form-2}
   \begin{split}
 (\cdot, \cdot): & \; \Scg \times \Scg \longrightarrow \A
 \\
([C], [C']) & =\delta_{C,C'}v^{d_{C^{t}} -d_{C}}f_{C^{t}, C}^{(\nu,\id,\nu)},\quad\mbox{where } \nu=\co(C).
\end{split}
\end{align}

We define an $\A$-module homomorphism $\rho$ on $\Scg$ such that
\begin{align}
  \label{eq:rho1}
 \rho([C])=v^{d_{C^{t}} -d_{C}}[C^{t}].
\end{align}

\begin{lem}
\label{addlem:anti-automorphismrho}
For any $C \in \Xi$, we have
\begin{align}
  \label{eq:rho2}
\rho(\{C\})=v^{d_{C^{t}} -d_{C}}\{C^{t}\}.
\end{align}
\end{lem}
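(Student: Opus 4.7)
The plan is to expand $\{C\}$ in the standard basis, apply $\rho$ term-by-term using \eqref{eq:rho1}, and then recognize the resulting sum as $v^{d_{C^t}-d_C}\{C^t\}$ by combining the symmetry $P_{C',C}=P_{C'^t,C^t}$ recorded in \eqref{eq:PCC} with a short length identity for the exponent.

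First I would start from $\{C\}=\sum_{C'\leq C}P_{C',C}[C']$ (the defining relation in \eqref{eq:A}) and apply $\rho$ to obtain
\[
\rho(\{C\})=\sum_{C'\leq C}P_{C',C}\, v^{d_{C'^{t}}-d_{C'}}[C'^{t}].
\]
The crucial observation is that the exponent $d_{C'^t}-d_{C'}$ is constant on $\{C'\leq C\}$. Indeed, for $C=(\gamma,g,\nu)$ the unique longest element of $W_\nu g^{-1}W_\gamma$ is $(g^+_{\gamma\nu})^{-1}$, so $\ell(w_{C^t}^+)=\ell(w_C^+)$ by \eqref{eq:lw}, and hence
\[
d_{C^{t}}-d_{C}=\ell(w_C^+)-\ell(w_\circ^\gamma)-\ell(w_C^+)+\ell(w_\circ^\nu)=\ell(w_\circ^\nu)-\ell(w_\circ^\gamma).
\]
This depends only on $\ro(C)$ and $\co(C)$; since the partial order on $\Xi$ fixes both row and column labels, $d_{C'^t}-d_{C'}=d_{C^t}-d_C$ for every $C'\leq C$.

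Factoring this common power of $v$ out and using \eqref{eq:PCC}, we get
\[
\rho(\{C\})=v^{d_{C^{t}}-d_{C}}\sum_{C'\leq C}P_{C'^{t},C^{t}}[C'^{t}].
\]
The involution $C'\mapsto C'^{t}$ restricts to a bijection between $\{C'\in\Xi\mid C'\leq C\}$ and $\{D\in\Xi\mid D\leq C^{t}\}$, because Bruhat order on $\widetilde W$ is invariant under inversion and the row/column labels are swapped consistently. Reindexing by $D=C'^t$ therefore identifies the remaining sum with $\sum_{D\leq C^{t}}P_{D,C^{t}}[D]=\{C^{t}\}$, giving \eqref{eq:rho2}.

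The only substantive point is the length identity $\ell(w_{C^t}^+)=\ell(w_C^+)$ together with the observation that the partial order preserves $\ro$ and $\co$; once these are in hand the argument is purely formal. I do not expect any real obstacle, since both facts are consequences of standard properties of double cosets in Coxeter groups combined with \eqref{eq:PCC}.
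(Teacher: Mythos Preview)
Your proof is correct and follows essentially the same route as the paper: expand $\{C\}$ in the standard basis, observe that $d_{C'^t}-d_{C'}=\ell(w_\circ^\nu)-\ell(w_\circ^\gamma)$ is independent of $C'\leq C$, invoke \eqref{eq:PCC}, and reindex to recover $\{C^t\}$. The only difference is that you spell out in more detail why the exponent is constant and why $C'\mapsto C'^t$ is an order-preserving bijection, whereas the paper states these points more tersely.
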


\begin{proof}
By \eqref{eq:PCC} we have $P_{C',C}=P_{C'^t, C^t}$ for any $C'\leq C$. Furthermore, since $d_{C^{t}} -d_{C}= \ell(w_\circ^\nu)-\ell(w_\circ^\gamma)$ for $C=(\gamma,g,\nu)\in \Xi$, which does not depend on $g$, we have $d_{C^{t}} -d_{C}=d_{C'^{t}} -d_{C'}$ for any $C'\leq C$. Hence
\begin{align*}
\rho(\{C\})&=\rho(\sum_{C'\leq C}P_{C',C}[C'])=\sum_{C'\leq C}P_{C',C}v^{d_{C'^t}-d_{C'}}[C'^t]\\
&=v^{d_{C^t}-d_{C}}\sum_{C'^t\leq C^t}P_{C'^t,C^t}[C'^t]=v^{d_{C^t}-d_{C}}\{C^t\}.
\end{align*}
\end{proof}

\begin{lem}
\label{lem:d=d00-aa}
For any $A, B, C\in \Xi$, we have $(\{A\}\{B\}, \{C\})=(\{B\}, \rho(\{A\})\{C\})$.
\end{lem}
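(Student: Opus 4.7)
The plan is to realize the inner product in the form $(X,Y) = \tau(\rho(X)Y)$ for a suitable linear functional $\tau$, and then exploit the fact that $\rho$ is an anti-homomorphism of $\Scg$. By $\A$-bilinearity of $(\cdot,\cdot)$ and of multiplication together with $\A$-linearity of $\rho$, it suffices to prove $([A][B], [C]) = ([B], \rho([A])[C])$ for standard basis elements $A, B, C \in \Xi$; the canonical-basis identity then follows since $\{A\},\{B\},\{C\}$ are $\A$-linear combinations of standard basis elements.

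First I would introduce the $\A$-linear functional $\tau: \Scg \to \A$ defined on the standard basis by $\tau([D]) = 1$ if $D = (\nu, \id, \nu)$ for some $\nu \in \Lambda_\ff$ and $\tau([D]) = 0$ otherwise, and establish the trace identity $(X,Y) = \tau(\rho(X)Y)$. By \eqref{eq:rho1} and \eqref{eq:fABC} this amounts to showing
\[
\tau(\rho([E])[F]) \;=\; v^{d_{E^t}-d_E} \sum_{\nu} f^{(\nu,\id,\nu)}_{E^t, F} \;=\; ([E], [F]).
\]
The key geometric input is that $f^{(\nu,\id,\nu)}_{E^t, F}$ counts (at the finite-field level) the $L' \in \mathscr{F}$ with $(L,L') \in \Ob_{E^t}$ and $(L',L) \in \Ob_F$ for a fixed $L \in G/P_\nu$; since $(L',L) \in \Ob_F$ is equivalent to $(L,L') \in \Ob_{F^t}$ and distinct $G$-orbits are disjoint, this count vanishes unless $E^t = F^t$, i.e.\ $E = F$, in which case it recovers exactly the right-hand side of \eqref{bilinear-form-2}.

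Second I would verify that $\rho$ is an anti-homomorphism, $\rho(XY) = \rho(Y)\rho(X)$. Writing $\rho([C]) = v^{d_{C^t}-d_C}\Psi([C])$ and using the anti-automorphism property of $\Psi$ from the preceding lemma, this reduces to the exponent identity $d_{C^t}-d_C = (d_{A^t}-d_A) + (d_{B^t}-d_B)$ whenever $[C]$ appears in $[A][B]$. Using $d_{C^t}-d_C = \ell(w_\circ^{\co(C)}) - \ell(w_\circ^{\ro(C)})$ together with the matching conditions $\ro(C) = \ro(A)$, $\co(C) = \co(B)$, $\ro(B) = \co(A)$, this is a one-line telescoping.

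Assembling the two ingredients yields
\[
(\{A\}\{B\}, \{C\}) = \tau\bigl(\rho(\{A\}\{B\})\{C\}\bigr) = \tau\bigl(\rho(\{B\})\rho(\{A\})\{C\}\bigr) = (\{B\}, \rho(\{A\})\{C\}),
\]
as desired. The main obstacle is the trace identity $(X,Y) = \tau(\rho(X)Y)$, whose content is the off-diagonal vanishing of the structure constants $f^{(\nu,\id,\nu)}_{E^t, F}$; the remaining steps are bookkeeping with lengths of longest elements and the already-established anti-automorphism $\Psi$.
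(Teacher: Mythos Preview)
Your proof is correct and takes a somewhat different, more structural route than the paper's own argument.

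The paper works directly at the finite-field level: it reduces (by linearity) to showing the adjunction identity $([A]\phi_B,\phi_C)_q = q^{(d_A-d_{A^t})/2}(\phi_B,[A^t]\phi_C)_q$ for characteristic functions, observes that both sides vanish unless the row/column data match, and then defers the remaining orbit-count verification to \cite[Proposition~3.2]{Mc12}. Your approach instead packages the inner product as a trace form $(X,Y)=\tau(\rho(X)Y)$ and upgrades $\rho$ to an anti-homomorphism (via $\Psi$ and the telescoping identity $d_{C^t}-d_C=(d_{A^t}-d_A)+(d_{B^t}-d_B)$), after which the adjunction is a one-line formal consequence. The geometric content is the same in both proofs---your off-diagonal vanishing $f^{(\nu,\id,\nu)}_{E^t,F}=0$ for $E\neq F$ is exactly the disjointness-of-orbits step implicit in the paper's reference---but your version is self-contained and makes explicit two facts (the trace-form description of the inner product, and that $\rho$ is an anti-homomorphism) that the paper does not isolate. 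One minor remark: your opening reduction to standard basis elements is unnecessary once you have the trace identity and the anti-homomorphism property, since those already give the statement for arbitrary $X,Y,Z\in\Scg$.
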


\begin{proof}
It suffices to prove the same identity using the inner product in $\mathbf{S}_{{q}}'$. Since the characteristic functions form a basis of $\mathbf{S}_{{q}}'$, it suffices to prove that $$([A]\phi_B, \phi_C)_{{q}}={q}^{\frac{d_{A}-d_{A^{t}}}{2}}(\phi_B, [A^{t}]\phi_C)_{{q}}.$$ Moreover we may assume that $A=(\gamma, w_{A}, \mu),$ $B=(\mu, w_{B}, \nu)$ and $C=(\gamma, w_{C}, \nu),$ as otherwise both sides of the above identity are zero. The remaining part of the proof is standard (which can be found in \cite[Proposition 3.2]{Mc12}), and will be skipped.
\end{proof}

\subsection{Positivity of the inner product}

Note that $x_J$ in \eqref{eq:xj} is a canonical basis element, i.e., $x_J =\mathcal{C}_{w_J}$. For proper subsets $I,J \subset \{0,1,\ldots, d\}$, we define ${}^I\! \widetilde{\HH}^J :=x_I \widetilde{\HH} \, x_J$.  The $\A$-linear anti-involution $i: \widetilde{\HH} \rightarrow \widetilde{\HH}, H_x \mapsto H_{x^{-1}}$, induces an isomorphism of $\A$-modules
\[
i: {}^I\! \widetilde{\HH}^J \longrightarrow {}^J\! \widetilde{\HH}^I.
\]
We define the following bar invariant element in $\A$:
\begin{align}
  \label{eq:piJ}
\pi(J) =v^{\ell(w_\circ^J)} \sum_{w\in W_{J}}v^{-2\ell(w)}.
\end{align}
Following and {\em slightly normalizing} \cite[pp.4573, 4568]{Wi11}, we define an inner product
\begin{align}
 \label{eq:bform}
\langle \cdot, \cdot \rangle :    {}^I\! \widetilde{\HH}^J \times {}^I\! \widetilde{\HH}^J \longrightarrow \A,
 \quad
\langle h, g \rangle  = \text{coefficient of } H_\id \text{ in }
\frac1{v^{\ell(w_\circ^J)}\pi(J)} h \cdot  i(g).
\end{align}
We remark that the inner product defined in \cite[pp.4573, 4568]{Wi11} differs from ours by a factor $v^{\ell(w_\circ^J)}$.

In the following lemma we shall establish the precise relation between the two inner products \eqref{bilinear-form-2} and \eqref{eq:bform}. Recall the canonical basis element $\mathcal{C}_w\in \widetilde{\HH}$ in \eqref{eq:KL}.
\begin{lem}
\label{lem:coincidence of two bilinear forms}
For any $C, C'\in \Xi,$ we have $$(\{C\}, \{C'\})= \big\langle \mathcal{C}_{(w_{C}^{+})^{-1}}, \mathcal{C}_{(w_{C'}^{+})^{-1}} \big\rangle.$$
\end{lem}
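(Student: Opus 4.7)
\emph{Proof plan.}

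The plan is to reduce the identity to a uniform formula expressing the Schur inner product on each $(\gamma,\nu)$-block of $\Scg$ in terms of a trace form on the Hecke algebra, and then to apply the defining property of the canonical basis, namely that the element $\{C\}$ acts on $x_\nu=\mathcal{C}_{w_\circ^\nu}$ as $\mathcal{C}_{w_C^+}$.

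First, both sides vanish unless $C$ and $C'$ share the same row type $\gamma$ and column type $\nu$: on the left this follows from the diagonal formula \eqref{bilinear-form-2} after expanding $\{C\}=\sum_{D\le C}P_{D,C}[D]$ and similarly for $\{C'\}$, since only $D=E$ survive and such $D$ exist only if $C,C'$ share shapes; on the right it follows because $\mathcal{C}_{(w_C^+)^{-1}}\in x_\nu\widetilde{\HH}x_\gamma$, so $\langle\cdot,\cdot\rangle$ requires matching parabolic types. Assume henceforth $C=(\gamma,w_C,\nu)$ and $C'=(\gamma,w_{C'},\nu)$.

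Next, I would establish the key intermediate identity: for any $f,g$ lying in the $(\gamma,\nu)$-block $\Hom_{\widetilde{\HH}}(x_\nu\widetilde{\HH},x_\gamma\widetilde{\HH})\subset\Sc_v$,
\begin{equation*}
(f,g)=\frac{1}{v^{\ell(w_\circ^\gamma)}\pi(\gamma)}\,\tau\bigl(i(f(x_\nu))\cdot g(x_\nu)\bigr),
\end{equation*}
where $\tau(h)$ denotes the coefficient of $H_\id$ in $h$. To verify this on the standard basis $f=[D],g=[E]$, I would use $[D](x_\nu)=v^{d_D+\ell(w_\circ^\nu)}H_{W_\gamma w_D W_\nu}$, apply $i(H_w)=H_{w^{-1}}$, and invoke the trace identity $\tau(H_aH_b)=\delta_{ab,\id}$; the resulting sum collapses to zero unless $W_\gamma w_D W_\nu=W_\gamma w_E W_\nu$, i.e.\ $D=E$, and then yields $v^{2d_D+2\ell(w_\circ^\nu)}\sum_{w\in W_\gamma w_DW_\nu}v^{-2\ell(w)}$. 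Matching against \eqref{bilinear-form-2} then reduces the intermediate identity to the explicit evaluation
\begin{equation*}
f^{(\nu,\id,\nu)}_{D^t,D}=\frac{v^{2\ell(w_D^+)-\ell(w_\circ^\nu)}}{\pi(\gamma)}\sum_{w\in W_\gamma w_DW_\nu}v^{-2\ell(w)},
\end{equation*}
which I would verify by directly unfolding the composition $\phi_{\nu\gamma}^{w_D^{-1}}\circ\phi_{\gamma\nu}^{w_D}$ at $x_\nu$ using the evaluation formulas from \eqref{eq:phig}; the factor $1/\pi(\gamma)$ in the denominator arises from the quadratic relation $x_\gamma^2=\pi(\gamma)x_\gamma$ appearing in the middle of the composition.

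To conclude, apply the intermediate identity to $f=\{C\}$ and $g=\{C'\}$. By the defining characterization of the canonical basis of $\Sc_v$, $\{\phi_{\gamma\nu}^{w_C}\}(x_\nu)=\mathcal{C}_{w_C^+}$ and $\{\phi_{\gamma\nu}^{w_{C'}}\}(x_\nu)=\mathcal{C}_{w_{C'}^+}$. Since $i$ preserves the Kazhdan--Lusztig basis via $\mathcal{C}_w\mapsto\mathcal{C}_{w^{-1}}$, we get $i(\mathcal{C}_{w_C^+})=\mathcal{C}_{(w_C^+)^{-1}}$, and hence
\begin{equation*}
(\{C\},\{C'\})=\frac{\tau\bigl(\mathcal{C}_{(w_C^+)^{-1}}\cdot\mathcal{C}_{w_{C'}^+}\bigr)}{v^{\ell(w_\circ^\gamma)}\pi(\gamma)}=\langle\mathcal{C}_{(w_C^+)^{-1}},\mathcal{C}_{(w_{C'}^+)^{-1}}\rangle
\end{equation*}
by recognizing $\mathcal{C}_{w_{C'}^+}=i(\mathcal{C}_{(w_{C'}^+)^{-1}})$. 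The main technical hurdle lies in the explicit computation of $f^{(\nu,\id,\nu)}_{D^t,D}$, which requires careful tracking of the $v$-powers from \eqref{eq:dC} and of the idempotent normalization $x_\gamma^2=\pi(\gamma)x_\gamma$; once this is settled, the rest of the argument is a formal bilinear comparison block by block.
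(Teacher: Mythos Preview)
Your proposal is correct and follows essentially the same route as the paper: both verify the identity on the standard basis $[D]$ by reducing the Schur inner product $([D],[D])$ and the Williamson inner product $\langle {}^{J_\nu}\!H_{D^t}^{J_\gamma}, {}^{J_\nu}\!H_{D^t}^{J_\gamma}\rangle$ to the same Hecke-algebra trace computation (the paper packages this through auxiliary coefficients $e_{A,B}^{C}$, you through $\tau$ directly), and then extend bilinearly to the canonical basis. Your final step---applying the intermediate identity $(f,g)=\langle i(f(x_\nu)),i(g(x_\nu))\rangle$ directly to $f=\{C\}$, $g=\{C'\}$ via $\{C\}(x_\nu)=\mathcal{C}_{w_C^+}$ and $i(\mathcal{C}_w)=\mathcal{C}_{w^{-1}}$---is a slightly cleaner repackaging of the paper's use of the matching transition matrices in \eqref{eq:A} and \eqref{eq:CBstd}, but the content is the same.
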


\begin{proof}
Recall the notation $H_C$ from \eqref{eq:HC}.

Similar to the proof of \cite[Proposition 9.7]{DDPW} (which actually makes sense for any type), for $A=(\gamma,w_{A},\mu)$ and $B=(\mu,w_{B},\nu)$, we have
\begin{align}\label{multiplication-form-1}
H_A H_B = v^{\ell(w_\circ^{\mu})}\pi(J_\mu) \sum\limits_{C} e_{A,B}^{C}H_C,
\end{align}
where $C=(\gamma,z,\nu)$ runs over $z\in\D_{\gamma\nu}$,
and $e_{A,B}^{C}\in \A.$ Moreover, we can write $H_B =x_{\mu}h$ for some $h\in \widetilde{\HH}.$ Thanks to $H_A H_B =\pi(J_\mu) H_A h$ and \eqref{eq:phig}, we compute
\begin{align*}
\phi_{\lambda\mu}^{w_{A}}\phi_{\mu\nu}^{w_{B}}(x_{\nu})=&
\phi_{\lambda\mu}^{w_{A}}(v^{\ell(w_\circ^\nu)}H_B)=v^{\ell(w_\circ^\nu)}\phi_{\lambda\mu}^{w_{A}}(x_{\mu})h
=v^{\ell(w_\circ^\nu)+\ell(w_\circ^\mu)}H_A h \\
=& v^{\ell(w_\circ^\nu)+2\ell(w_\circ^\mu)}\sum\limits_{C}e_{A,B}^{C}H_C
= v^{2\ell(w_\circ^\mu)}\sum_{C=(\lambda,z,\nu)}e_{A,B}^{C}\phi_{\lambda\nu}^{z}(x_{\nu}).
\end{align*}
Hence,  by \eqref{eq:Sbasis} and \eqref{eq:fABC} we have
\begin{align}
 \label{eq:fe}
 f_{A, B}^{C}=v^{2\ell(w_\circ^\mu)}v^{d_{A}+d_{B}-d_{C}}e_{A,B}^{C}.
\end{align}
Hence, for $C=(\gamma, w_{C}, \nu)\in \Xi,$ we have by \eqref{bilinear-form-2} and \eqref{eq:fe} that
\begin{align}\label{bilinear-form-add4}
([C], [C])=v^{2\ell(w_\circ^\gamma)}v^{-d_{C}+d_{C^{t}}}v^{d_{C}+d_{C^{t}}}e_{C^{t}, C}^{(\nu,\id,\nu)}
=v^{2\ell(w_\circ^\gamma) + 2d_{C^{t}}}e_{C^{t}, C}^{(\nu,\id,\nu)}.
\end{align}

On the other hand, using the notation in \cite[pp.4569]{Wi11}, we set
\begin{align*}
{}^{J_\gamma}\!H_{C}^{J_\nu} :=v^{\ell(w_{C}^{+})}H_C.
\end{align*}
By the definition \eqref{eq:bform} of the bilinear form $\langle \cdot, \cdot\rangle$ and \eqref{multiplication-form-1}, we have
\begin{align}
\big\langle{}^{J_\gamma}\!H_{C}^{J_\nu}, {}^{J_\gamma}\!H_{C}^{J_\nu} \big\rangle
=v^{2\ell(w_{C}^{+})}e_{C, C^{t}}^{(\gamma,\id,\gamma)},
\qquad
\big \langle{}^{J_\nu}\!H_{C^t}^{J_\gamma}, {}^{J_\nu}\!H_{C^t}^{J_\gamma} \big \rangle
=v^{2\ell(w_{C}^{+})}e_{C^{t}, C}^{(\nu,\id,\nu)}.
\label{bilinear-form-add5}
\end{align}
Note that $d_{C^{t}}=\ell(w_{C}^{+})-\ell(w_\circ^\gamma)$ by \eqref{eq:dC}. Therefore, by \eqref{bilinear-form-add4}--\eqref{bilinear-form-add5} we have
\begin{align}\label{bilinear-form-add6}
([C], [C])= \big\langle{}^{J_\nu}\!H_{C^t}^{J_\gamma}, {}^{J_\nu}\!H_{C^t}^{J_\gamma} \big\rangle.
\end{align}
Moreover, we have (see \cite[Lemma 2.13]{Wi11})
\begin{align}
  \label{eq:CC1}
([C], [C'])=0 =\big\langle{}^{J_\nu}\!H_{C^t}^{J_\gamma}, {}^{J_\nu}\!H_{(C')^t}^{J_\gamma} \big\rangle,
\qquad \text{ for }C\neq C'.
\end{align}

By \cite{Cur85} (also cf. \cite[Lemma~1.5]{Du92}) we have
\begin{align}
  \label{eq:CBstd}
\mathcal{C}_{(w_{C}^{+})^{-1}}={}^{J_\nu}\!H_{C^t}^{J_\gamma}+\sum\limits_{\tilde{C}< C^{t}}P_{\widetilde{C}, C^{t}}{}^{J_{\nu}}\!H_{\widetilde{C}}^{J_{\gamma}}.
\end{align}
The lemma now follows by \eqref{eq:A} and \eqref{bilinear-form-add6}--\eqref{eq:CBstd}.
\end{proof}

\begin{thm}
\label{thm:inner}
For any $C, C'\in \Xi,$ we have $(\{C\}, \{C'\})\in \delta_{C, C'}+v\mathbb{N}[v].$
\end{thm}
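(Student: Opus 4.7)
The plan is to use Lemma~\ref{lem:coincidence of two bilinear forms} to translate the claim into a positivity statement for the Williamson inner product $\langle\cdot,\cdot\rangle$ on the parabolic Hecke bimodule ${}^{J_\nu}\widetilde\HH^{J_\gamma}$, and then combine the positivity of parabolic Kazhdan--Lusztig polynomials (cf.~\eqref{eq:A}) with a geometric positivity of the standard-basis diagonal norms.

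By Lemma~\ref{lem:coincidence of two bilinear forms} it suffices to show $\langle\mathcal{C}_{(w_C^+)^{-1}},\mathcal{C}_{(w_{C'}^+)^{-1}}\rangle\in\delta_{C,C'}+v\N[v]$. If $\ro(C)\neq\ro(C')$ or $\co(C)\neq\co(C')$, the canonical basis elements lie in distinct bimodules and both sides vanish (consistent with $([D],[E])=0$ for $D\neq E$ in \eqref{bilinear-form-2}), so I may assume $C=(\gamma,g,\nu)$ and $C'=(\gamma,g',\nu)$. Inserting the expansion \eqref{eq:CBstd} for both factors (with $P_{D,D}=1$ and $P_{\widetilde C, D}\in v\N[v]$ for $\widetilde C<D$) and applying the orthogonality \eqref{eq:CC1} of the standard basis yields
\[
\langle\mathcal{C}_{(w_C^+)^{-1}},\mathcal{C}_{(w_{C'}^+)^{-1}}\rangle=\sum_{\widetilde C}P_{\widetilde C,C^t}\,P_{\widetilde C,(C')^t}\,N_{\widetilde C},
\]
where $N_{\widetilde C}:=\langle{}^{J_\nu}\!H_{\widetilde C}^{J_\gamma},{}^{J_\nu}\!H_{\widetilde C}^{J_\gamma}\rangle=([\widetilde C^t],[\widetilde C^t])$ by \eqref{bilinear-form-add6}.

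The heart of the proof is to establish $N_{\widetilde C}\in 1+v\N[v]$. Combining \eqref{bilinear-form-2} with the identity $f^E_{A,B}=v^{d_A+d_B-d_E}m^E_{A,B}$, which follows from $[A]=v^{d_A}\phi_A$ and \eqref{eq:m}, one rewrites $N_{\widetilde C}=v^{2d_{\widetilde C}}m^{(\nu,\id,\nu)}_{\widetilde C,\widetilde C^t}$. Geometrically, for $\widetilde C=(\nu,h,\gamma)$, the structure constant $m^{(\nu,\id,\nu)}_{\widetilde C,\widetilde C^t;q}$ counts the $\mathbb F_q$-points of $X_{\widetilde C}^L=P_\nu hP_\gamma/P_\gamma$, and the Iwahori--Bruhat decomposition endows this variety with an affine cell decomposition indexed by the minimal length right coset representatives in $W_\nu h W_\gamma/W_\gamma$. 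Hence $|X_{\widetilde C}^L|\in\N[q]$ is a polynomial of degree $d_{\widetilde C}$ (Lemma~\ref{eq:dell}) with leading coefficient $1$, and the substitution $q\leadsto v^{-2}$ places $N_{\widetilde C}\in 1+v^2\N[v^2]\subset 1+v\N[v]$.

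Assembling: if $C=C'$, the summand $\widetilde C=C^t$ equals $N_{C^t}\in 1+v\N[v]$ while every other summand lies in $v^2\N[v]\cdot(1+v\N[v])\subset v\N[v]$; if $C\neq C'$, every nonzero summand involves at least one factor $P_{\widetilde C,\cdot}\in v\N[v]$ (either $\widetilde C=C^t<(C')^t$, or $\widetilde C=(C')^t<C^t$, or $\widetilde C$ strictly below both), and therefore lies in $v\N[v]$. The total belongs to $\delta_{C,C'}+v\N[v]$. The main obstacle is the norm positivity $N_{\widetilde C}\in 1+v\N[v]$, which is essentially Williamson's positivity result for Schur algebroids \cite{Wi11}, realized here concretely via the affine cell decomposition of the generalized Schubert variety $P_\nu hP_\gamma/P_\gamma$; the remainder is a formal consequence of this input together with the parabolic Kazhdan--Lusztig positivity \eqref{eq:A}.
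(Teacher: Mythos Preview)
Your proof is correct and takes a genuinely different, more elementary route than the paper. The paper simply invokes Williamson's positivity for singular Soergel bimodules \cite{Wi08,Wi11} as a black box to obtain $\langle \mathcal{C}_{(w_C^+)^{-1}}, \mathcal{C}_{(w_{C'}^+)^{-1}}\rangle \in \delta_{C,C'}+v\N[v]$, then applies Lemma~\ref{lem:coincidence of two bilinear forms}. You instead expand both canonical elements in the standard basis via \eqref{eq:CBstd}, use the orthogonality \eqref{eq:CC1}, and reduce the problem to the two separate positivity inputs $P_{\widetilde C,D}\in v\N[v]$ (already available from \eqref{eq:A}) and $N_{\widetilde C}\in 1+v\N[v]$. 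The latter you establish directly by identifying $N_{\widetilde C}$ with $v^{2d_{\widetilde C}}$ times the point count $|X_{\widetilde C}^L|$ and reading off the Schubert-cell decomposition of $P_\nu h P_\gamma/P_\gamma$; this is an elementary Bruhat-decomposition argument and does not require Soergel bimodules or hypercohomology at all. (Your closing remark that this step ``is essentially Williamson's positivity result'' undersells your own argument: Williamson's theorem concerns the canonical basis, whereas the standard-basis norm positivity you actually need is the much softer statement that a $P_\nu$-orbit in $G/P_\gamma$ is paved by affine cells with a unique top-dimensional one.) What your approach buys is a self-contained proof using only the classical inputs already set up in \S\ref{sec:geom}; what the paper's approach buys is brevity, at the cost of importing heavier machinery.
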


\begin{proof}
According to \cite[pp.9]{Wi08} and \cite[Theorem 5.4.1]{Wi08}, the fully-faithfulness of hypercohomology implies that the geometric form (using perverse sheaves) and the algebraic form (using Soergel bimodules) agree. (We remark that the shift $[-\ell(w_J)]$ in \cite[Theorem 5.4.1]{Wi08} disappears when using our normalized inner product \eqref{eq:bform}.) Therefore, we have $\big\langle \mathcal{C}_{(w_{C}^{+})^{-1}}, \mathcal{C}_{(w_{C'}^{+})^{-1}} \big\rangle \in \delta_{C, C'}+v\mathbb{N}[v].$ The lemma now follows from this and the identification of inner products in Lemma~ \ref{lem:coincidence of two bilinear forms}.
\end{proof}

\section{Cells in affine $q$-Schur algebras}
\label{sec:cells}

We shall identify the affine $q$-Schur algebras $\mathbf{S}'_v \equiv \mathbf{S}_v$ and their canonical bases via the isomorphism $\aleph$ in Proposition~\ref{addprop:alg-isomor}, and use only the notation $\mathbf{S}_v$ in this section. We shall provide a classification of left, right and two-sided cells in $\Sc_v$ and study their properties, which is inspired by \cite{Mc03}. The positivity of the canonical basis for $\mathbf{S}_v$ will be used crucially which originates from the geometric construction.

\subsection{The $\mathfrak{a}$-function}

Let $h_{x,y}^{z}$ denote the structure constants of $\widetilde{\HH}$ with respect to the KL canonical basis $\{\mathcal{C}_{w}\:|\:w\in \widetilde{W}\}$, that is,
\[
\mathcal{C}_{x}\cdot\mathcal{C}_{y}=\sum_{z\in \widetilde{W}}h_{x,y}^{z}\mathcal{C}_{z}.
\]
Recall from \eqref{eq:positivity} that $g_{A, B}^{C}$ are the structure constants of $\Sc_v$ with respect to the canonical basis $\B (\Sc_v)$.
Recall $J_\gamma$ from \eqref{Jgamma}, and $\pi(J)$ from \eqref{eq:piJ}. The following lemma establishes a relation between the above two structure constants, whose proof is similar to the one for finite type A (cf. \cite[Proposition~ 3.4]{Du92}).

\begin{lem}
\label{lem:d=d1=aaabc}
For any $A=(\gamma, w_A, \mu), B=(\mu, w_B, \nu), C=(\gamma, w_C, \nu)\in \Xi$, we have
\[
\pi(J_{\mu}) g_{A, B}^{C}=h_{w_{A}^{+}, w_{B}^{+}}^{w_{C}^{+}}.
\]
\end{lem}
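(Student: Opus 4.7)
The strategy is to evaluate the product $\{A\}\{B\}\in\Sc_v$ on the standard generator $x_\nu=\mathcal{C}_{w_\circ^\nu}\in x_\nu\widetilde\HH$ in two ways and match the resulting expansions in the KL basis of $\widetilde\HH$. On one hand, the expansion \eqref{eq:positivity} gives $\{A\}\{B\}(x_\nu)=\sum_{D\in\Xi} g_{A,B}^D\{D\}(x_\nu)$; since by construction $\{D\}(x_{\nu'})=\delta_{\nu,\nu'}\mathcal{C}_{w_D^+}$ and the image lies in $x_\gamma\widetilde\HH$, only triples $D=(\gamma,w_D,\nu)$ contribute, so
\[
\{A\}\{B\}(x_\nu)=\sum_{D=(\gamma,w_D,\nu)} g_{A,B}^D\,\mathcal{C}_{w_D^+}.
\]

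On the other hand, $\{B\}(x_\nu)=\mathcal{C}_{w_B^+}$ by the defining property of the canonical basis, so we must compute $\{A\}(\mathcal{C}_{w_B^+})$. The key point is that $w_B^+$ is the longest element of $W_\mu w_B W_\nu$, hence $sw_B^+<w_B^+$ for every simple reflection $s\in W_\mu$. Standard KL calculus then gives $H_s\mathcal{C}_{w_B^+}=v^{-1}\mathcal{C}_{w_B^+}$; expanding $x_\mu=\sum_{w\in W_\mu} v^{\ell(w_\circ^\mu)-\ell(w)}H_w$ and applying $H_w\mathcal{C}_{w_B^+}=v^{-\ell(w)}\mathcal{C}_{w_B^+}$ inductively yields the idempotent-type identity
\[
x_\mu\cdot\mathcal{C}_{w_B^+}=\pi(J_\mu)\,\mathcal{C}_{w_B^+}.
\]
Combined with the $\widetilde\HH$-linearity of $\{A\}$ and the definition $\{A\}(x_\mu)=\mathcal{C}_{w_A^+}$, this yields
\[
\pi(J_\mu)\,\{A\}(\mathcal{C}_{w_B^+})=\{A\}(x_\mu\cdot\mathcal{C}_{w_B^+})=\{A\}(x_\mu)\cdot\mathcal{C}_{w_B^+}=\mathcal{C}_{w_A^+}\cdot\mathcal{C}_{w_B^+}.
\]
Expanding the right-hand side in the KL basis, $\mathcal{C}_{w_A^+}\mathcal{C}_{w_B^+}=\sum_z h_{w_A^+,w_B^+}^z\mathcal{C}_z$, and matching against the first computation gives the desired identity provided every $z$ with $h_{w_A^+,w_B^+}^z\neq 0$ has the form $z=w_D^+$ for some $D=(\gamma,\cdot,\nu)\in\Xi$.

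This last vanishing is the main technical point. The product $X=\mathcal{C}_{w_A^+}\mathcal{C}_{w_B^+}$ inherits left $W_\gamma$-invariance from $\mathcal{C}_{w_A^+}$ (since $\mathcal{C}_s\mathcal{C}_{w_A^+}=(v+v^{-1})\mathcal{C}_{w_A^+}$ whenever $s\in W_\gamma$) and right $W_\nu$-invariance from $\mathcal{C}_{w_B^+}$, so $\mathcal{C}_sX=(v+v^{-1})X$ for $s$ simple in $W_\gamma$ and $X\mathcal{C}_s=(v+v^{-1})X$ for $s$ simple in $W_\nu$. Using the KL multiplication formula for $\mathcal{C}_s\mathcal{C}_z$ when $sz>z$ and running a maximality argument on the expansion $X=\sum h^z\mathcal{C}_z$, one concludes that any such simultaneously invariant element lies in the $\A$-span of those $\mathcal{C}_z$ for which $z$ is the longest element of its double coset $W_\gamma z W_\nu$, i.e., $z=w_D^+$. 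Comparing coefficients of $\mathcal{C}_{w_C^+}$ on the two sides then yields $\pi(J_\mu)\, g_{A,B}^C=h_{w_A^+,w_B^+}^{w_C^+}$, exactly as claimed. The argument is the affine analogue of Du's proof in finite type A and goes through verbatim since $W_\gamma,W_\mu,W_\nu$ remain finite parabolic subgroups even though $\widetilde W$ is infinite.
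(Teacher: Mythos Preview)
Your proof is correct and follows essentially the same route as the paper's: evaluate $\{A\}\{B\}$ on $x_\nu=\mathcal{C}_{w_\circ^\nu}$, use the symmetrizer identity $x_\mu\,\mathcal{C}_{w_B^+}=\pi(J_\mu)\,\mathcal{C}_{w_B^+}$ together with right $\widetilde{\HH}$-linearity to obtain $\pi(J_\mu)\sum_D g_{A,B}^D\,\mathcal{C}_{w_D^+}=\mathcal{C}_{w_A^+}\mathcal{C}_{w_B^+}$, and read off the coefficient of $\mathcal{C}_{w_C^+}$.

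One remark: your final paragraph proving that every $z$ with $h_{w_A^+,w_B^+}^z\neq 0$ is of the form $w_D^+$ is true but unnecessary. Since the $\mathcal{C}_z$ form a basis of $\widetilde{\HH}$ and the map $D=(\gamma,w_D,\nu)\mapsto w_D^+$ is injective, you can simply compare the coefficient of the single basis element $\mathcal{C}_{w_C^+}$ on both sides; whether or not other $\mathcal{C}_z$ occur on the right is irrelevant to extracting $\pi(J_\mu)g_{A,B}^C=h_{w_A^+,w_B^+}^{w_C^+}$. The paper's proof omits this step for exactly that reason.
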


\begin{proof}
Since $\mathcal{C}_{w_\circ^\mu}$ is a $q$-symmetrizer, we have $\mathcal{C}_{w_\circ^\mu}\mathcal{C}_{w_{B}^{+}}=\pi(J_{\mu})\mathcal{C}_{w_{B}^{+}}$.
We then compute
\begin{align*}
\sum_{C\in \Xi}\pi(J_{\mu})g_{A, B}^{C}\mathcal{C}_{w_{C}^{+}}
&=\{A\}\{B\}(\pi(J_{\mu})\mathcal{C}_{w_\circ^\nu})
=\{A\}(\pi(J_{\mu})\mathcal{C}_{w_{B}^{+}})\\
&=\{A\}(\mathcal{C}_{w_\circ^\mu}\mathcal{C}_{w_{B}^{+}})
=\{A\}(\mathcal{C}_{w_\circ^\mu})\mathcal{C}_{w_{B}^{+}}\\
&=\mathcal{C}_{w_{A}^{+}}\mathcal{C}_{w_{B}^{+}}
=\sum_{z\in W}h_{w_{A}^{+},w_{B}^{+}}^{z}\mathcal{C}_{z}.
\end{align*}
A comparison of the coefficients of $\mathcal{C}_{w_{C}^{+}}$ of both sides gives us the desired equality.
\end{proof}

The $\mathbf{a}$-function on $\widetilde{W}$ introduced by Lusztig (cf. \cite[\S13.6]{Lu03}) is defined as follows:
\begin{equation*}
\mathbf{a}: \widetilde{W}\longrightarrow \mathbb{N}, \quad \mathbf{a}(z)=\min\{k\in\mathbb{N}~|~v^kh_{x,y}^z\in\mathbb{Z}[v]\ \mbox{for all $x,y\in\widetilde{W}$}\}.
\end{equation*}
For any $x,y,z\in\widetilde{W}$, denote by $\gamma_{x,y}^{z^{-1}} \in \N$ the unique integer such that
$$h_{x,y}^{z} -\gamma_{x,y}^{z^{-1}}v^{-\mathbf{a}(z)} \in v^{-\mathbf{a}(z)+1}\mathbb{Z}[v].$$
It is clear by definition of the $\mathbf{a}$-function that for each $z\in \widetilde{W}$ there exist $x,y\in\widetilde{W}$ satisfying $\gamma_{x,y}^{z^{-1}}\neq0$.

For $A,B,C \in \Xi$, let $n_{A, B}^{C}$ be the minimal nonnegative integer $n$ such that $v^{n}g_{A, B}^{C}\in \mathbb{Z}[v]$.
Note that $\{n_{A, B}^{C}~|~A,B,C\in\Xi\}$ is a finite set because of Lemma \ref{lem:d=d1=aaabc} and the fact (cf. \cite[Theorem~7.2]{Lu85b}) that
 $v^{\ell(w_0)}h_{x,y}^z\in\mathbb{Z}[v]$ for any $x,y,z\in\widetilde{W}$, where $w_0$ is the longest element in the finite Weyl group $W$.
We define the \emph{$\mathfrak{a}$-function} $$\mathfrak{a}: \Xi \rightarrow \mathbb{Z},$$ which can be viewed as a $q$-Schur algebra analog of the $\mathbf{a}$-function, as follows:
\begin{equation}
\label{def:afunction}
\mathfrak{a}(C)=\max\left\{n_{A, B}^{C}+\ell(w_\circ^\mu)-\ell(w_\circ^\nu)\middle|
\begin{array}{c}
A, B\in\Xi, \mu\in\Lambda_{\ff},\\
\ro(B)=\mu, \co(B)=\nu=\co(C)
\end{array}
\right\}.
\end{equation}
We remark that the $\mathfrak{a}$-function is well defined since for each $C\in\Xi$ the set on the RHS of \eqref{def:afunction} is finite, thanks to the finiteness of $\Lambda_{\ff}$ and $\{n_{A, B}^{C}~|~A,B,C\in\Xi\}$.


It follows by definition of the $\mathfrak{a}$-function that $v^{\mathfrak{a}(C)+\ell(w_\circ^\nu)-\ell(w_\circ^\mu)} g_{A, B}^{C} \in \Z[v]$, for $B=(\mu, w_{B}, \nu)$. Recall $C^t$ from \eqref{eq:Ct}. We set
\[
\gamma_{A, B}^{C^{t}} \in \Z \text{ such that } v^{\mathfrak{a}(C)+\ell(w_\circ^\nu)-\ell(w_\circ^\mu)} g_{A, B}^{C} -\gamma_{A, B}^{C^{t}} \in v\Z[v].
\]
By definition of the $\mathfrak{a}$-function, for each $C\in\Xi$, there exist $A,B\in\Xi$ such that $\gamma_{A, B}^{C^{t}} \neq 0$.

\subsection{Distinguished elements in $\Sc_v$}


We define $\Delta(z) \in \N$ and $n_z \in \Z\setminus\{0\}$ to be the unique integers (cf. \cite[\S14.1]{Lu03}) such that
\begin{align*}
p_{\id,z} - n_z  v^{\Delta(z)}  \in v^{\Delta(z) +1} \Z[v].
\end{align*}
An element $z \in \widetilde{W}$ is called {\em distinguished} if $\mathbf{a}(z)=\Delta(z)$.

Note that $\{(\nu,\id,\nu) \} =[(\nu,\id,\nu)]$, and that, by Theorem~\ref{thm:inner}, $(\{C\}, [(\nu,\id,\nu)]) \in \N[v]$, for $\nu\in \Lambda_{\ff}$.
For $C \in \Xi$ with $\ro(C)=\co(C)$, we define $\Delta(C) \in \N$ and $n_C \in \Z_{>0}$ to be the unique integers such that
\begin{align}
 \label{eq:nC}
(\{C\}, [(\ro(C),\id,\ro(C))]) - n_C\, v^{\Delta(C)}  \in v^{\Delta(C) +1} \Z[v].
\end{align}

\begin{lem}
\label{lem:d=d1-ad-add1}
For any $C \in \Xi$ with $\ro(C)=\co(C)$, we have $\mathfrak{a}(C)\leq\Delta(C).$
\end{lem}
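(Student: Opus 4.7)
The plan is to relate $\mathfrak{a}(C)$ and $\Delta(C)$ by evaluating the inner product $(\{A\}\{B\}, \{(\nu,\id,\nu)\})$ in two different ways for a suitable choice of $A, B \in \Xi$, where $\nu := \ro(C) = \co(C)$. The positivity supplied by Proposition~\ref{prop:positive} and Theorem~\ref{thm:inner} will be essential to prevent cancellations when tracking the lowest power of $v$.

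First I would note that $(\nu,\id,\nu)$ is the unique minimal element of its fiber under the partial order on $\Xi$, hence $\{(\nu,\id,\nu)\} = [(\nu,\id,\nu)] = \phi^\id_{\nu\nu}$ is an idempotent. Expanding $\{D\} = \sum_{E\leq D} P_{E,D}[E]$ and using the diagonality of the standard-basis pairing \eqref{bilinear-form-2} together with $[(\nu,\id,\nu)]^2 = [(\nu,\id,\nu)]$, one obtains
\[
(\{D\}, [(\nu,\id,\nu)]) = P_{(\nu,\id,\nu), D}
\]
whenever $\ro(D) = \co(D) = \nu$. In particular, by \eqref{eq:nC}, the lowest nonzero power of $v$ in $(\{C\}, [(\nu,\id,\nu)])$ equals $v^{\Delta(C)}$ with positive coefficient $n_C$.

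Next I would choose $A, B \in \Xi$ realizing the maximum in the definition \eqref{def:afunction} of $\mathfrak{a}(C)$ and satisfying $\gamma_{A,B}^{C^t} \neq 0$ (which exists by the remark following that definition). This choice forces $\co(A) = \ro(B) =: \mu$, $\ro(A) = \co(B) = \nu$, and $n_{A,B}^C = \mathfrak{a}(C) + \ell(w_\circ^\nu) - \ell(w_\circ^\mu)$, while Proposition~\ref{prop:positive} makes $\gamma_{A,B}^{C^t}$ a positive integer. Expanding $\{A\}\{B\} = \sum_D g_{A,B}^D \{D\}$ and applying the previous step yields
\[
(\{A\}\{B\}, \{(\nu,\id,\nu)\}) = \sum_D g_{A,B}^D\, P_{(\nu,\id,\nu), D}.
\]
Positivity of the structure constants $g_{A,B}^D$ (Proposition~\ref{prop:positive}) together with the positivity of the Kazhdan--Lusztig polynomials $P_{(\nu,\id,\nu), D}$ guarantees that no cancellations occur in this sum, so its lowest power of $v$ is at most that of the $D = C$ summand, which is $v^{-n_{A,B}^C + \Delta(C)}$ with strictly positive coefficient $\gamma_{A,B}^{C^t}\,n_C$.

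On the other hand, the adjunction of Lemma~\ref{lem:d=d00-aa} combined with Lemma~\ref{addlem:anti-automorphismrho} and the identity $\{A^t\}\cdot [(\nu,\id,\nu)] = \{A^t\}$ (which holds because $\co(A^t) = \nu$) gives
\[
(\{A\}\{B\}, \{(\nu,\id,\nu)\}) = v^{d_{A^t}-d_A}\,(\{B\}, \{A^t\}).
\]
By Theorem~\ref{thm:inner} the right-hand side lies in $v^{d_{A^t}-d_A}\,\mathbb{N}[v]$, and a direct computation using \eqref{eq:dC} and \eqref{eq:lw} gives $d_{A^t} - d_A = \ell(w_\circ^\mu) - \ell(w_\circ^\nu)$. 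Comparing the two expressions produces $\ell(w_\circ^\mu) - \ell(w_\circ^\nu) \leq -n_{A,B}^C + \Delta(C)$, which after substituting $n_{A,B}^C = \mathfrak{a}(C) + \ell(w_\circ^\nu) - \ell(w_\circ^\mu)$ collapses to $\mathfrak{a}(C) \leq \Delta(C)$. The main obstacle is the no-cancellation step, which is precisely what the two positivity results deliver.
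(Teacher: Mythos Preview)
Your proof is correct and follows essentially the same approach as the paper's: both compute $(\{A\}\{B\}, [(\nu,\id,\nu)])$ in two ways for $A,B$ with $\gamma_{A,B}^{C^t}\neq 0$, use adjunction (Lemmas~\ref{addlem:anti-automorphismrho}--\ref{lem:d=d00-aa}) and Theorem~\ref{thm:inner} to bound the lowest power of $v$ from below by $d_{A^t}-d_A$, and use positivity (Proposition~\ref{prop:positive} and Theorem~\ref{thm:inner}) to ensure the $D=C$ summand is not cancelled. Your version spells out a couple of facts the paper leaves implicit (e.g.\ $(\{D\},[(\nu,\id,\nu)])=P_{(\nu,\id,\nu),D}$ and $\{A^t\}\cdot[(\nu,\id,\nu)]=\{A^t\}$), but the argument is the same.
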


\begin{proof}
Let $C=(\nu, w_{C}, \nu)$.
Choose $A=(\nu, w_{A},\mu)$ and $B=(\mu, w_{B},\nu)$, for $\mu\in \Lambda_{\ff}$, such that
\begin{align}
 \label{eq:g1}
g_{A, B}^{C}=\gamma_{A, B}^{C^{t}}v^{-\mathfrak{a}(C) -\ell(w_\circ^\nu) +\ell(w_\circ^\mu)}+\mathrm{higher~degree~terms}, \text{ and } \gamma_{A, B}^{C^{t}}\neq 0.
\end{align}
We have the following inner product
\begin{align}\label{inner-product-1}
(\{A\}\{B\}, [(\nu,\id,\nu)])=\sum\limits_{E\in \Xi}g_{A, B}^{E}(\{E\}, [(\nu,\id,\nu)]).
\end{align}
By Lemmas \ref{addlem:anti-automorphismrho}--\ref{lem:d=d00-aa} and Theorem~\ref{thm:inner}, we have
\[
(\{A\}\{B\}, [(\nu,\id,\nu)]) =v^{d_{A^{t}} -d_A}(\{B\}, \{A^{t}\}) \in v^{d_{A^{t}} -d_A}\mathbb{N}[v].
\]
By Proposition \ref{prop:positive} and Theorem~\ref{thm:inner} again, all the summands on the RHS of \eqref{inner-product-1} have nonnegative integer coefficients. In particular, we must have
\begin{align}
  \label{eq:ginner1}
g_{A, B}^{C}(\{C\}, [(\nu,\id,\nu)])\in v^{d_{A^{t}} -d_A}\mathbb{N}[v].
\end{align}
On the other hand, since $d_{A^t}-d_{A}= \ell(w_\circ^\mu) -\ell(w_\circ^\nu)$ by \eqref{eq:dC}, it follows by \eqref{eq:nC}--\eqref{eq:g1} that
\begin{align}
 \label{eq:ginner2}
g_{A, B}^{C}(\{C\}, [(\nu,\id,\nu)]) =n_{C}\gamma_{A, B}^{C^{t}}v^{\Delta(C) -\mathfrak{a}(C)+d_{A^t}-d_{A}} +\mathrm{higher~terms},
\end{align}
where $n_{C} \gamma_{A, B}^{C^{t}}\neq 0.$
The desired inequality follows now by comparing \eqref{eq:ginner1}--\eqref{eq:ginner2}.
\end{proof}

\begin{definition}
An element $D\in\Xi$ is called {\em distinguished} if $\ro(D) =\co(D)$ and $\mathfrak{a}(D)=\Delta(D).$ Let $\mathcal{D}$ be the set of distinguished elements in $\Xi$.
\end{definition}


The following lemma gives a characterization of distinguished elements in $\Sc_v$.
\begin{lem}
\label{lem:d=d1}
An element $D\in\Xi$ is distinguished
if and only if $\ro(D)=\co(D)$ and $w_{D}^{+}\in\widetilde{W}$ is distinguished.
\end{lem}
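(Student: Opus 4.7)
My strategy is to translate the affine $q$-Schur invariants $\Delta(D)$ and $\mathfrak{a}(D)$ into Hecke-algebra invariants $\Delta(w_D^+)$ and $\mathbf{a}(w_D^+)$, and then match up the inequalities.

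For $\Delta$, set $\nu=\ro(D)=\co(D)$. Since no element of $\Xi$ lies strictly below $(\nu,\id,\nu)$, formula \eqref{eq:CB1} gives $\{(\nu,\id,\nu)\}=[(\nu,\id,\nu)]$. Lemma~\ref{lem:coincidence of two bilinear forms} then reads
\[(\{D\},[(\nu,\id,\nu)])=\langle\mathcal{C}_{(w_D^+)^{-1}},\mathcal{C}_{w_\circ^\nu}\rangle.\]
Unpacking the inner product \eqref{eq:bform} and using $i(x_\nu)=x_\nu$ together with the identity $\mathcal{C}_{(w_D^+)^{-1}}\cdot x_\nu=\pi(J_\nu)\,\mathcal{C}_{(w_D^+)^{-1}}$ (which holds because $J_\nu$ is contained in the right descent set of $(w_D^+)^{-1}$, since $w_D^+$ is longest in $W_\nu w_D W_\nu$), plus the standard symmetry $p_{y,w}=p_{y^{-1},w^{-1}}$, yields
\[(\{D\},[(\nu,\id,\nu)])=v^{-\ell(w_\circ^\nu)}\,p_{\id,w_D^+}(v).\]
Matching lowest-degree terms with the definition \eqref{eq:nC} of $\Delta(D)$ gives $\Delta(D)=\Delta(w_D^+)-\ell(w_\circ^\nu)$ (and $n_D=n_{w_D^+}$).

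For $\mathfrak{a}$, Lemma~\ref{lem:d=d1=aaabc} reads $\pi(J_\mu)\,g_{A,B}^C=h_{w_A^+,w_B^+}^{w_C^+}$. Since $\pi(J_\mu)=\sum_{w\in W_\mu}v^{\ell(w_\circ^\mu)-2\ell(w)}$ starts at degree $-\ell(w_\circ^\mu)$ with leading coefficient $1$, comparing lowest degrees gives $n_{A,B}^C+\ell(w_\circ^\mu)\le\mathbf{a}(w_C^+)$, with equality iff $\gamma_{w_A^+,w_B^+}^{(w_C^+)^{-1}}\neq 0$. Taking the maximum over $A,B,\mu$ in \eqref{def:afunction} yields $\mathfrak{a}(D)\le\mathbf{a}(w_D^+)-\ell(w_\circ^\nu)$. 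Combining with Lemma~\ref{lem:d=d1-ad-add1} and Lusztig's Property (P1), $\mathbf{a}(z)\le\Delta(z)$ (valid for our equal-parameter affine Hecke algebra $\widetilde{\mathbf{H}}$ by \cite{Lu03}), we obtain
\[\mathfrak{a}(D)\le\mathbf{a}(w_D^+)-\ell(w_\circ^\nu)\le\Delta(w_D^+)-\ell(w_\circ^\nu)=\Delta(D).\]

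The ``$\Rightarrow$'' direction is immediate: $\mathfrak{a}(D)=\Delta(D)$ forces all three inequalities above to be equalities, so in particular $\mathbf{a}(w_D^+)=\Delta(w_D^+)$, i.e., $w_D^+$ is distinguished. For ``$\Leftarrow$'', with $w_D^+$ distinguished the middle inequality collapses, and one must also force the first, i.e., exhibit $A,B,\mu$ with $\gamma_{w_A^+,w_B^+}^{(w_D^+)^{-1}}\neq 0$. My plan is to take $\mu=\nu$, $A=D^t$, $B=D$, giving $w_A^+=(w_D^+)^{-1}$ and $w_B^+=w_D^+$, and to invoke the nonvanishing $\gamma_{(w_D^+)^{-1},w_D^+}^{(w_D^+)^{-1}}=n_{w_D^+}>0$, which holds in the equal-parameter setting via Lusztig's cell theory of distinguished elements. \emph{The main obstacle} is this final nonvanishing: in Lusztig's framework every distinguished element in the equal-parameter case is a distinguished involution, and the required $\gamma$-coefficient coincides with $n_{w_D^+}$ by the standard identities relating $n$-values to Kazhdan--Lusztig $\gamma$-coefficients.
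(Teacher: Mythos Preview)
Your argument is correct and follows essentially the same route as the paper: relate $\Delta(D)$ to $\Delta(w_D^+)$ via the identity $(\{D\},[(\nu,\id,\nu)])=v^{-\ell(w_\circ^\nu)}p_{\id,w_D^+}$, bound $\mathfrak{a}(D)+\ell(w_\circ^\nu)\le\mathbf{a}(w_D^+)$ via Lemma~\ref{lem:d=d1=aaabc}, and then squeeze. The only cosmetic differences are that the paper derives $(\{D\},[(\nu,\id,\nu)])=v^{-\ell(w_\circ^\nu)}p_{\id,w_D^+}$ directly from \eqref{eq:p=P} and \cite[Theorem~6.6(c)]{Lu03} rather than via Lemma~\ref{lem:coincidence of two bilinear forms}, and for the ``$\Leftarrow$'' nonvanishing it simply cites \cite[Conj.~14.2(P13), (P7) and \S15.6]{Lu03} to get $\gamma_{(w_D^+)^{-1},w_D^+}^{(w_D^+)^{-1}}\neq 0$, without needing the stronger identification with $n_{w_D^+}$ that you invoke via (P5)--(P6).
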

\begin{proof}
We can write $D=(\nu,w_{D},\nu)$, for some $\nu\in \Lambda_{\ff}$.
By definition of the inner product and \eqref{eq:A}, we have $(\{D\}, [(\nu,\id,\nu)])=P_{(\nu,\id,\nu), D}$. We see that $P_{(\nu,\id,\nu), D}=p_{w_\circ^\nu, w_{D}^{+}}$ by \eqref{eq:p=P}.
By \cite[Theorem 6.6(c)]{Lu03}, we have $p_{w_\circ^\nu, w_{D}^{+}}=v^{-\ell(w_\circ^\nu)}p_{\id, w_{D}^{+}}.$ 
Hence, we have $(\{D\}, [(\nu,\id,\nu)])= v^{-\ell(w_\circ^\nu)}p_{\id, w_{D}^{+}}$, and by definitions of $\Delta$'s, we see
\begin{align}  \label{eq:DD}
\Delta(w_{D}^{+})=\Delta(D)+\ell(w_\circ^\nu).
\end{align}

$(\Longrightarrow)$ Suppose that $D$ is distinguished, i.e., $\mathfrak{a}(D)=\Delta(D)$.
By Lemma \ref{lem:d=d1=aaabc}, we have $\mathfrak{a}(D)+\ell(w_\circ^\nu)\leq \mathbf{a}(w_{D}^{+})$. By \cite[\S15.2]{Lu03}, we have $\mathbf{a}(w_{D}^{+})\leq \Delta(w_{D}^{+})$.
Then $\mathbf{a}(w_{D}^{+})\leq \Delta(w_{D}^{+}) =\Delta(D) +\ell(w_\circ^\nu) =\mathfrak{a}(D)+\ell(w_\circ^\nu) \leq \mathbf{a}(w_{D}^{+})$.
Hence $\mathbf{a}(w_{D}^{+})=\Delta(w_{D}^{+}),$ i.e., $w_{D}^{+}$ is distinguished.

$(\Longleftarrow)$  Suppose that $w_{D}^{+}$ is distinguished, i.e., $\mathbf{a}(w_{D}^{+})=\Delta(w_{D}^{+}).$ By Lemma \ref{lem:d=d1=aaabc}, we have $\mathfrak{a}(D)+\ell(w_\circ^\nu)\leq \mathbf{a}(w_{D}^{+}).$ By \cite[Conj.~ 14.2(P13) and \S15.6]{Lu03} we have $\gamma_{(w_{D}^{+})^{-1}, w_{D}^{+}}^{(w_{D}^{+})^{-1}}\neq 0,$ that is, the structure constant $h_{(w_{D}^{+})^{-1}, w_{D}^{+}}^{w_{D}^{+}}$ has $-\mathbf{a}(w_{D}^{+})$ as the lowest power of $v$, and so $\mathbf{a}(w_{D}^{+})\leq \mathfrak{a}(D)+\ell(w_\circ^\nu)$ by Lemma \ref{lem:d=d1=aaabc} again. Hence
\begin{align}  \label{eq:aa}
 \mathbf{a}(w_{D}^{+}) =\mathfrak{a}(D)+\ell(w_\circ^\nu).
\end{align}
By assumption $\mathbf{a}(w_{D}^{+})=\Delta(w_{D}^{+})$ and \eqref{eq:DD}--\eqref{eq:aa}, we conclude that $\mathfrak{a}(D)=\Delta(D)$, i.e., $D$ is distinguished.
\end{proof}

\begin{cor}
\label{cor:DDt}
If $D\in\Xi$ is distinguished then so is $D^t$.
\end{cor}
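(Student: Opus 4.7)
The plan is to reduce the statement to the analogous symmetry property for distinguished elements in the extended affine Weyl group $\widetilde{W}$ via the characterization provided in Lemma~\ref{lem:d=d1}.

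First I would verify the ``type'' condition: since $\ro(D^t) = \co(D)$ and $\co(D^t) = \ro(D)$ by \eqref{eq:Ct}, the hypothesis $\ro(D) = \co(D) = \nu$ immediately gives $\ro(D^t) = \co(D^t) = \nu$. Next, unpacking the definitions of $w_C$ and $w_C^+$ in \eqref{eq:wC}, if $D = (\nu, g, \nu)$, then $D^t = (\nu, g^{-1}, \nu)$ and $w_{D^t}^+$ is the longest element of $W_\nu g^{-1} W_\nu$. Since inversion is a length-preserving bijection of $\widetilde{W}$ carrying $W_\nu g W_\nu$ onto $W_\nu g^{-1} W_\nu$, we obtain $w_{D^t}^+ = (w_D^+)^{-1}$.

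The heart of the proof is then the claim that if $z \in \widetilde{W}$ is distinguished in Lusztig's sense, so is $z^{-1}$. This follows from two symmetries of the affine Hecke algebra that are already in use in the paper: (i) the Kazhdan--Lusztig polynomial identity $p_{y,w} = p_{y^{-1},w^{-1}}$ (cited just after \eqref{eq:PCC}), which immediately gives $\Delta(z) = \Delta(z^{-1})$ and $n_z = n_{z^{-1}}$; and (ii) the equality $\mathbf{a}(z) = \mathbf{a}(z^{-1})$, which is a consequence of the anti-involution $H_x \mapsto H_{x^{-1}}$ of $\widetilde{\HH}$ intertwining the KL basis as $\mathcal{C}_x \mapsto \mathcal{C}_{x^{-1}}$ and hence sending structure constants $h^z_{x,y}$ to $h^{z^{-1}}_{y^{-1},x^{-1}}$ (this is property (P5)/standard material in \cite[\S13]{Lu03}). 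Combining (i) and (ii) yields $\mathbf{a}(z^{-1}) = \mathbf{a}(z) = \Delta(z) = \Delta(z^{-1})$, so $z^{-1}$ is distinguished.

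Applying this to $z = w_D^+$, we conclude that $w_{D^t}^+ = (w_D^+)^{-1}$ is distinguished in $\widetilde{W}$. Together with the already verified condition $\ro(D^t) = \co(D^t)$, the reverse direction of Lemma~\ref{lem:d=d1} then gives $D^t \in \mathcal{D}$. The only step that requires any vigilance is invoking $\mathbf{a}(z) = \mathbf{a}(z^{-1})$, but since this is standard for equal-parameter affine Hecke algebras and the paper is working throughout in the equal-parameter setting, no additional work is needed here; the rest is bookkeeping with $(-)^t$.
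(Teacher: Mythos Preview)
Your proposal is correct and follows essentially the same approach as the paper: reduce to Lemma~\ref{lem:d=d1} and the fact that $z$ distinguished in $\widetilde{W}$ implies $z^{-1}$ distinguished. The paper's own proof is a one-liner that simply cites \cite[\S14.1]{Lu03} for the latter fact, whereas you spell out why $w_{D^t}^+=(w_D^+)^{-1}$ and supply the standard argument ($p_{\id,z}=p_{\id,z^{-1}}$ and $\mathbf a(z)=\mathbf a(z^{-1})$) for the Hecke-algebra symmetry; no substantive difference.
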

\begin{proof}
It follows from Lemma~\ref{lem:d=d1} and the fact (cf. \cite[\S14.1]{Lu03}) that if $z\in W$ is distinguished then so is $z^{-1}$.
\end{proof}

\subsection{Cells in $\Sc_v$}

For $C, C'\in \Xi$, we write $C\preceq_{L}C'$ (resp. $C\preceq_{R}C'$) if the canonical basis element $\{C\}$ appears with nonzero coefficient in the canonical basis expansion of $h\{C'\}$ (resp. $\{C'\}h$), for some $h \in \B (\Sc_v)$. Write $C\preceq_{LR}C'$ if $\{C\}$ appears with nonzero coefficient in the canonical basis expansion of $h\{C'\}h'$, for some $h, h' \in \B (\Sc_v)$. These relations are pre-orders, and they induce the corresponding equivalence relations $\sim_{L}$, $\sim_{R}$ and $\sim_{LR}.$ We call the resulting equivalence classes the left, right and two-sided cells of $\Xi$, respectively.  Alternatively, a cell can be understood as consisting of the canonical basis elements $\{C\}$ in $\Sc_v$ parameterized by elements $C$ in a cell of $\Xi$.

The same notions in the same notations above were defined for the extended affine Weyl group $\widetilde{W}$ or $\widetilde{\HH}$ with respect to the KL canonical basis $\{\mathcal{C}_{w}\:|\:w\in \widetilde{W}\}$, cf. \cite{KL79, Lu03}.

The following proposition provides a classification of left, right and two-sided cells in $\Sc_v.$
\begin{prop}
\label{prop:d=d2}
Let $C, C'\in \Xi$. The following hold:
\begin{enumerate}
\item
$C\preceq_{L}C'$ if and only if $\co(C)=\co(C')$ and $w_{C}^{+}\preceq_{L} w_{C'}^{+}$. Similarly, $C\preceq_{R}C'$ if and only if $\ro(C)=\ro(C')$ and $w_{C}^{+}\preceq_{R} w_{C'}^{+}$.
\item
$C\sim_{L}C'$ if and only if $\co(C)=\co(C')$ and $w_{C}^{+}\sim_{L} w_{C'}^{+}$. Similarly, $C\sim_{R}C'$ if and only if $\ro(C)=\ro(C')$ and $w_{C}^{+}\sim_{R} w_{C'}^{+}$.
\item
$C\sim_{LR}C'$ if and only if $w_{C}^{+}\sim_{LR} w_{C'}^{+}$.
\item
$\mathfrak{a}(C)=\mathbf{a}(w_{C}^{+})-\ell(w_\circ^\nu)$ where $\nu=\co(C)$.
\end{enumerate}
\end{prop}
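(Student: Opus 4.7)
The engine of the proof is Lemma~\ref{lem:d=d1=aaabc}, which recasts the Schur algebra structure constants as Hecke algebra structure constants up to the positive factor $\pi(J_\mu)$: $\pi(J_\mu)g_{A,B}^{C}=h_{w_A^+,w_B^+}^{w_C^+}$ for $A=(\gamma,w_A,\mu)$, $B=(\mu,w_B,\nu)$, $C=(\gamma,w_C,\nu)$. Since $\pi(J_\mu)\in\A$ is nonzero with lowest term $v^{-\ell(w_\circ^\mu)}$ of coefficient $1$, one has $g_{A,B}^{C}\neq 0$ if and only if $h_{w_A^+,w_B^+}^{w_C^+}\neq 0$. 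This identity transports cell-theoretic questions for $\Sc_v$ to $\widetilde{\HH}$, where Lusztig's machinery applies.

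For the forward implication of (1), if $C\preceq_L C'$, then $\{C\}$ occurs in $\{A\}\{C'\}$ for some $A\in\Xi$. Matching row/column indices forces $\co(A)=\ro(C')$, $\co(C)=\co(C')$, and $\ro(A)=\ro(C)$, while $g_{A,C'}^{C}\neq 0$ gives $h_{w_A^+,w_{C'}^+}^{w_C^+}\neq 0$, i.e.\ $w_C^+\preceq_L w_{C'}^+$. For the converse, set $\gamma:=\ro(C),\,\gamma':=\ro(C'),\,\nu:=\co(C)=\co(C')$ and pick $x\in\widetilde{W}$ with $h_{x,w_{C'}^+}^{w_C^+}\neq 0$. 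By the descent structure of the factors, the expansion
\begin{equation*}
\mathcal{C}_{w_\circ^{\gamma}}\mathcal{C}_x\mathcal{C}_{w_\circ^{\gamma'}}=\sum_{A=(\gamma,w_A,\gamma')\in\Xi}c_A\,\mathcal{C}_{w_A^+},\qquad c_A\in\N[v,v^{-1}]
\end{equation*}
involves only the parabolic longest elements $\mathcal{C}_{w_A^+}$. Multiplying on the right by $\mathcal{C}_{w_{C'}^+}$ and invoking $\mathcal{C}_{w_\circ^{\gamma'}}\mathcal{C}_{w_{C'}^+}=\pi(J_{\gamma'})\mathcal{C}_{w_{C'}^+}$ together with $\mathcal{C}_{w_\circ^{\gamma}}\mathcal{C}_{w_C^+}=\pi(J_{\gamma})\mathcal{C}_{w_C^+}$ (both using $W_{\gamma'}\subset L(w_{C'}^+)$ and $W_\gamma\subset L(w_C^+)$ from the parabolic longest structure), the coefficient of $\mathcal{C}_{w_C^+}$ on the LHS contains $\pi(J_\gamma)\pi(J_{\gamma'})h_{x,w_{C'}^+}^{w_C^+}\neq 0$ as a summand in $\N[v,v^{-1}]$, hence is nonzero. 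The RHS coefficient $\sum_A c_A h_{w_A^+,w_{C'}^+}^{w_C^+}$ is a sum in $\N[v,v^{-1}]$, so KL positivity precludes cancellation and some term $c_A h_{w_A^+,w_{C'}^+}^{w_C^+}\neq 0$, yielding the required $A$. The right-cell statement is handled symmetrically (or via the anti-automorphism $\Psi$ of \eqref{eq:psi}, using $p_{y^{-1},w^{-1}}=p_{y,w}$), and (2), (3) follow by combining left and right.

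For (4), let $k_{A,B}$ denote the negative of the lowest exponent of $v$ in $h_{w_A^+,w_B^+}^{w_C^+}$ (and $-\infty$ if this vanishes). Lemma~\ref{lem:d=d1=aaabc} gives $n_{A,B}^C+\ell(w_\circ^\mu)=\max(\ell(w_\circ^\mu),k_{A,B})$. The upper bound $\mathfrak{a}(C)+\ell(w_\circ^\nu)\leq\mathbf{a}(w_C^+)$ follows from $k_{A,B}\leq\mathbf{a}(w_C^+)$ (definition of $\mathbf{a}$) combined with $\ell(w_\circ^\mu)\leq\mathbf{a}(w_B^+)\leq\mathbf{a}(w_C^+)$ whenever $g_{A,B}^C\neq 0$ (using $W_\mu\subset L(w_B^+)$, part (1), and monotonicity of $\mathbf{a}$ under $\preceq_L$, i.e.\ Lusztig's (P4)). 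For the lower bound, I would choose $x,y\in\widetilde{W}$ attaining $\mathbf{a}(w_C^+)$ in $h_{x,y}^{w_C^+}$; Lusztig's (P7)--(P13) on the three-cycle structure of $\gamma_{x,y,z^{-1}}$ ensure $y\sim_L w_C^+$ (so $R(y)\supseteq W_\nu$) and analogously for $x$. Averaging with $\mathcal{C}_{w_\circ^\gamma}$ on the left of $\mathcal{C}_x$ and $\mathcal{C}_{w_\circ^\nu}$ on the right of $\mathcal{C}_y$, and inserting $\mathcal{C}_{w_\circ^\mu}$ in the middle for a suitable $\mu\in\Lambda_\ff$, produces via the positivity argument of (1) a pair $(w_A^+,w_B^+)$ with $k_{A,B}=\mathbf{a}(w_C^+)$, closing the lower bound.

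The principal obstacle lies in the lower bound of (4): the averaging must produce $A,B\in\Xi$ with matching middle index $\mu\in\Lambda_\ff$, i.e.\ descent sets of $w_A^+$ and $w_B^+$ meeting in a common $W_\mu$ corresponding to an admissible orbit. This requires delicate bookkeeping of left/right descent sets using Lusztig's cell-theoretic (P)-properties in conjunction with positivity of both the KL basis in $\widetilde{\HH}$ and the geometric canonical basis in $\Sc_v$ from Proposition~\ref{prop:positive}.
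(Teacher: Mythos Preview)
Your argument for (1) and the forward direction of (3) matches the paper's almost exactly. Two points deserve comment.

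\textbf{Part (3), ``if'' direction.} Saying that (3) ``follows by combining left and right'' elides a genuine step. From $w_C^+\sim_{LR}w_{C'}^+$ one does not immediately get a chain through elements of the form $w_E^+$ for $E\in\Xi$. The paper invokes a result of Lusztig \cite[\S3.1(k)(l)]{Lu87c}: there exists a single $x\in\widetilde W$ with $w_C^+\sim_L x\sim_R w_{C'}^+$. Then \cite[Proposition~2.4]{KL79} gives $R(x)=R(w_C^+)\supseteq J_{\co(C)}$ and $L(x)=L(w_{C'}^+)\supseteq J_{\ro(C')}$, so $x=w_D^+$ for $D=(\ro(C'),\, \cdot\,,\co(C))\in\Xi$, and (2) finishes the job. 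Your averaging trick from (1) could also be pushed through, but you would still need some input of this type to land inside $\Xi$.

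\textbf{Part (4), lower bound.} The obstacle you flag is real: starting from arbitrary $x,y$ attaining $\mathbf a(w_C^+)$ and trying to manufacture a common middle index $\mu\in\Lambda_\ff$ by sandwiching is delicate, precisely because there is no assumption that $\Lambda_\ff$ contains an orbit with $J_\mu$ inside a prescribed descent set. The paper avoids this entirely with a much cleaner choice. By (P13) there is a distinguished $d$ in the left cell of $w_C^+$ with $\gamma_{(w_C^+)^{-1},w_C^+}^{d}\neq 0$; by (P6) one has $d=d^{-1}$, so \cite[Proposition~2.4]{KL79} forces both $L(d)$ and $R(d)$ to contain $J_\nu$ (where $\nu=\co(C)$). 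Hence $d=w_D^+$ for some $D=(\nu,w_D,\nu)\in\Xi$. Taking $(A,B)=(C,D)$ gives $\mu=\nu$ for free, and Lemma~\ref{lem:d=d1=aaabc} together with (P7) yields that $g_{C,D}^{C}$ has lowest term $v^{-\mathbf a(w_C^+)+\ell(w_\circ^\nu)}$, i.e.\ $n_{C,D}^C=\mathbf a(w_C^+)-\ell(w_\circ^\nu)$, which is exactly the lower bound. No averaging or positivity is needed here at all.

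Your upper bound argument for (4) is correct and in fact slightly more explicit than the paper's one-line appeal to Lemma~\ref{lem:d=d1=aaabc}; the observation that $\ell(w_\circ^\mu)\le\mathbf a(w_B^+)\le\mathbf a(w_C^+)$ via $h_{w_\circ^\mu,w_B^+}^{w_B^+}=\pi(J_\mu)$ and (P4) is the right way to handle the case $g_{A,B}^C\neq 0$.
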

\begin{proof}
(1) The proof is similar to that of \cite[Lemma 2.2]{Du96}. The ``only if'' part follows from Lemma \ref{lem:d=d1=aaabc}.

Conversely, suppose that $\co(C) =\co(C') =\gamma$ and $w_{C}^{+}\preceq_{L} w_{C'}^{+}$. Then $\mathcal{C}_{w_{C}^{+}}$ appears with nonzero coefficient in the product $\mathcal{C}_{w}\mathcal{C}_{w_{C'}^{+}}$ for some $w$. Denote $C=(\lambda,w_{C},\gamma)$ and $C'=(\mu,w_{C'},\gamma)$, for $\lambda,\mu\in \Lambda_{\ff}$. By positivity of the structure constants with respect to the Kazhdan-Lusztig basis and \cite[Theorem 6.6(b)]{Lu03}, $\mathcal{C}_{w_{C}^{+}}$ appears with nonzero coefficient in $\mathcal{C}_{w_\circ^\lambda}\mathcal{C}_{w}\mathcal{C}_{w_\circ^\mu}\mathcal{C}_{w_{C'}^{+}}$. By \cite[(1.9)-(1.10)]{Cur85} $\mathcal{C}_{w_\circ^\lambda}\mathcal{C}_{w}\mathcal{C}_{w_\circ^\mu}$ is a linear combination of the elements $\mathcal{C}_{w_{D}^{+}}$ ($D=(\lambda,w_D,\mu)\in\Xi$). Thus, $\mathcal{C}_{w_{C}^{+}}$ appears with nonzero coefficient in some product $\mathcal{C}_{w_{D}^{+}}\mathcal{C}_{w_{C'}^{+}}.$ It follows by Lemma \ref{lem:d=d1=aaabc} and the hypothesis that $\{C\}$ appears with nonzero coefficient in $\{D\}\{C'\}$, hence $C\preceq_{L}C'$.

The proof for the second claim on $\preceq_{R}$ is entirely similar.

(2) follows from (1).

(3) The ``only if'' part follows from Lemma \ref{lem:d=d1=aaabc}.

Conversely, suppose that $w_{C}^{+}\sim_{LR} w_{C'}^{+}$. By \cite[\S3.1(k)(l)]{Lu87c} there exists $x\in \widetilde{W}$ such that $w_{C}^{+}\sim_{L} x \sim_{R}w_{C'}^{+}$. Denote $\co(C)=\gamma$ and $\ro(C') =\mu$.
Then by \cite[Proposition 2.4]{KL79} we have $x=w_{D}^{+},$ for $D=(\mu,w_D,\gamma)\in\Xi$. Therefore, by (2) we have $C\sim_{L}D\sim_{R}C'$, hence $C\sim_{LR}C'$.

(4)  By \cite[Conj.~ 14.2(P13) and \S15.6]{Lu03} there exists a unique distinguished element $d$ in the left cell containing $w_{C}^{+}$ such that $\gamma_{(w_{C}^{+})^{-1},w_{C}^{+}}^{d}\neq 0$. By \cite[Proposition~ 2.4]{KL79} and \cite[Conj.~ 14.2(P6) and \S15.6]{Lu03}, this element $d$ is the longest one in a double coset of $W_{\nu}\backslash \widetilde{W}/W_{\nu}.$ By Lemma \ref{lem:d=d1=aaabc}, we have $\pi(J_{\nu}) g_{C, D}^{C}=h_{w_{C}^{+},d}^{w_{C}^{+}}$, for $D =(\nu,w_D,\nu)$ with $w_{D}^{+}=d$. By \cite[Conj.~ 14.2(P7) and \S15.6]{Lu03} we have $\gamma_{w_{C}^{+},d}^{(w_{C}^{+})^{-1}}=\gamma_{(w_{C}^{+})^{-1},w_{C}^{+}}^{d} \neq 0$. Hence $\mathbf{a}(w_{C}^{+})\leq \mathfrak{a}(C)+\ell(w_\circ^\nu).$

On the other hand, by Lemma~\ref{lem:d=d1=aaabc} we have $\mathfrak{a}(C)+\ell(w_\circ^\nu)\leq \mathbf{a}(w_{C}^{+})$. Thus (4) is proved.
\end{proof}

\begin{cor}
\label{lem:d=d3}
\begin{enumerate}
\item
There are finitely many two-sided cells in $\Sc_v$.
\item
Suppose $\Lambda_{\ff}$ contains at least one regular $\widetilde{W}$-orbit. Then there is a one-to-one correspondence between the two-sided cells in $\Sc_v$ and the two-sided cells in  $\widetilde{\HH}$.
\end{enumerate}
\end{cor}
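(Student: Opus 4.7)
The plan is to reduce both parts of the corollary to Proposition~\ref{prop:d=d2}(3), which precisely says that two elements $C, C' \in \Xi$ lie in the same two-sided cell of $\Sc_v$ if and only if their ``longest representatives'' $w_C^+, w_{C'}^+ \in \widetilde{W}$ lie in the same two-sided cell of $\widetilde{W}$ (with respect to the Kazhdan--Lusztig basis of $\widetilde{\HH}$). Thus the map
\[
\Theta: \Xi \longrightarrow \widetilde{W}, \qquad C \mapsto w_C^+
\]
descends to a well-defined injection
\[
\overline{\Theta}: \{\text{two-sided cells in } \Sc_v\} \longrightarrow \{\text{two-sided cells in } \widetilde{\HH}\}.
\]

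\textbf{Part (1).} I would invoke the known finiteness of two-sided cells in the extended affine Hecke algebra $\widetilde{\HH}$, which is due to Lusztig (via finiteness of the values of the $\mathbf{a}$-function on $\widetilde{W}$, cf.\ \cite{Lu87b, Lu03}). Since $\overline{\Theta}$ is injective by Proposition~\ref{prop:d=d2}(3), the number of two-sided cells in $\Sc_v$ is bounded by the number of two-sided cells in $\widetilde{\HH}$, and hence is finite.

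\textbf{Part (2).} Under the hypothesis that $\Lambda_{\ff}$ contains a regular orbit $\omega$, I need only verify that $\overline{\Theta}$ is surjective; injectivity is already provided by Proposition~\ref{prop:d=d2}(3). The key observation is that regularity of $\omega$ means $J_\omega = \emptyset$, so $W_\omega = \{\id\}$ and hence $\D_{\omega\omega} = \widetilde{W}$ (as noted in the text preceding the $q$-Schur duality). Therefore, for every $w \in \widetilde{W}$, the triple $C_w := (\omega, w, \omega)$ lies in $\Xi$, and because the double coset $W_\omega w W_\omega = \{w\}$ is a singleton, its longest element is $w^+_{\omega\omega} = w$. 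Hence every element of $\widetilde{W}$ arises as $w_C^+$ for some $C \in \Xi$, so every two-sided cell of $\widetilde{W}$ is hit by $\overline{\Theta}$. Combined with the injectivity, this proves the bijection.

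The argument is essentially a bookkeeping consequence of Proposition~\ref{prop:d=d2}(3), so there is no real obstacle once that proposition is in hand; the only subtlety worth flagging is part (1), where one must cite the finiteness of two-sided cells in the extended (rather than only the non-extended) affine Hecke algebra, but this is standard since $\widetilde{W} = \Omega \ltimes \Waff$ with $\Omega$ finite and two-sided cells behave well under the $\Omega$-action.
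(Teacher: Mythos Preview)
Your proof is correct and follows essentially the same approach as the paper: both reduce to Proposition~\ref{prop:d=d2}(3) together with Lusztig's finiteness of two-sided cells in $\widetilde{\HH}$ (the paper cites \cite[Theorem~2.2(a)]{Lu87a}), with your version making explicit the surjectivity argument for part~(2) via the regular orbit $\omega$ that the paper leaves implicit.
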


\begin{proof}
It has been known (cf. \cite[Theorem~2.2(a)]{Lu87a}) that there are finitely many two-sided cells in the affine Hecke algebra $\widetilde{\HH}$. Hence the corollary follows from this and Proposition \ref{prop:d=d2}(3).
\end{proof}

\subsection{Properties of cells in $\Sc_v$}

The following theorem is a $q$-Schur algebra analog of properties for cells in Hecke algebras (cf. \cite[Conj.~ 14.2(P1)--(P15)]{Lu03}, which holds for finite/affine Hecke algebras of equal parameters  \cite[\S15]{Lu03}). (Note that a counterpart is not formulated here for (P12) of \cite[Conj.~ 14.2]{Lu03} which compares the $\mathbf{a}$-functions for a Coxeter group and its parabolic subgroup.)

\begin{thm}
  \label{thm:cell structures}
The following properties hold.

\begin{itemize}
\item[(P1)] For any $C \in \Xi$ with $\ro(C)=\co(C)$, we have $\mathfrak{a}(C)\leq\Delta(C)$.
\item[(P2)] If $D\in \mathcal{D}$ and $\gamma_{A, B}^{D}\neq 0$, for $A, B \in \Xi$, then $B=A^{t}$.
\item[(P3)] For each $C\in \Xi$, there is a unique $D\in \mathcal{D}$ such that $\gamma_{C, C^{t}}^{D}\neq 0$.
\item[(P4)] If $C\preceq_{LR}C'$ and $\co(C) =\co(C')$,
then $\mathfrak{a}(C)\geq \mathfrak{a}(C')$. Hence, if $C\sim_{LR}C'$ and  $\co(C) =\co(C')$,
then $\mathfrak{a}(C)=\mathfrak{a}(C')$.
\item[(P5)] If $D\in \mathcal{D}$ and $\gamma_{A^{t}, A}^{D}\neq 0$ for $A\in \Xi$, then $\gamma_{A^{t}, A}^{D}=n_{D}=1$.
\item[(P6)] If $D\in \mathcal{D}$, then $D=D^{t}$.
\item[(P7)] We have $\gamma_{A, B}^{C}=\gamma_{C, A}^{B}=\gamma_{B, C}^{A}$, for any $A, B, C \in \Xi$.
\item[(P8)] Suppose that $\gamma_{A, B}^{C}\neq 0$, for $A, B, C \in \Xi$. Then $A\sim_{L}B^{t}$, $B\sim_{L}C^{t}$ and $C\sim_{L}A^{t}$.
\item[(P9)] If $C\preceq_{L}C'$ and $\mathfrak{a}(C)=\mathfrak{a}(C')$, then $C\sim_{L}C'$.
\item[(P10)] If $C\preceq_{R}C'$ and $\mathfrak{a}(C^{t})=\mathfrak{a}(C'^{t})$, then $C\sim_{R}C'$.
\item[(P11)] If $C\preceq_{LR}C'$, $\co(C) =\co(C')$ and $\mathfrak{a}(C)=\mathfrak{a}(C'),$ then $C\sim_{LR}C'$.
%
\item[(P13)] Each left cell $\Gamma$ in $\Sc_v$ contains a unique distinguished element $D$. Moreover, we have $\gamma_{C^{t}, C}^{D}\neq 0$, for all $C\in \Gamma$.
\item[(P14)] We have $C\sim_{LR}C^{t}$, for any $C\in \Xi$.
\item[(P15)] Let $v'$ be a second indeterminate and let $\hat{g}_{A, B}^{C}\in \mathbb{Z}[v', v'^{-1}]$ be obtained from $g_{A, B}^{C}$ by the substitution $v\mapsto v'$. If $C, C', A, B\in \Xi$ with $A, B$ belonging to the same two-sided cell (saying $\mathbf{c}$), then
\[
\sum\limits_{A'\in \mathbf{c}}\hat{g}_{B, C'}^{A'}g_{C, A'}^{A}=\sum\limits_{A'\in \mathbf{c}}g_{C, B}^{A'}\hat{g}_{A', C'}^{A}.
\]
\end{itemize}
\end{thm}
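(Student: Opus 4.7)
The plan is to deduce each property by reducing it to its counterpart for the extended affine Hecke algebra $\widetilde{\HH}$, where (P1)--(P15) are known by Lusztig \cite[\S15]{Lu03}. Two bridges make this transparent: Lemma~\ref{lem:d=d1=aaabc}, which gives $\pi(J_\mu) g_{A,B}^C = h_{w_A^+,w_B^+}^{w_C^+}$, and Proposition~\ref{prop:d=d2}, which translates left/right/two-sided cells and the $\mathfrak{a}$-function on $\Xi$ into the corresponding cell structure and the $\mathbf{a}$-function on $\widetilde{W}$.

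The first step is to establish the calibration
$$\gamma_{A,B}^{C^t}=\gamma_{w_A^+,w_B^+}^{(w_C^+)^{-1}},$$
by observing that $\pi(J_\mu)$ has lowest-degree term $v^{-\ell(w_\circ^\mu)}$ with coefficient $1$, and using $\mathfrak{a}(C)+\ell(w_\circ^{\co(C)})=\mathbf{a}(w_C^+)$ from Proposition~\ref{prop:d=d2}(4). With this identity in hand, (P2), (P3), (P5) and (P7) follow immediately from their Hecke-algebra analogues once one verifies $\ro$/$\co$ compatibilities via Proposition~\ref{prop:d=d2}(1)--(2). Property (P1) is already Lemma~\ref{lem:d=d1-ad-add1}. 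For (P6), Lemma~\ref{lem:d=d1} shows that $w_D^+$ is an involution for $D\in\mathcal{D}$; hence the double coset $W_\nu w_D W_\nu$ is stable under inversion, forcing its minimal representative $w_D$ to satisfy $w_D=w_D^{-1}$, i.e.\ $D=D^t$. Properties (P4), (P9), (P10), (P11) follow at once from the Hecke-algebra versions through Proposition~\ref{prop:d=d2}(2)--(4) (the $\co(C)=\co(C')$ hypothesis in (P4) ensures that the shift $\ell(w_\circ^\nu)$ in Proposition~\ref{prop:d=d2}(4) cancels). Property (P8) is deduced from (P2) and (P7) by the standard argument in \cite[\S14.12]{Lu03}, and (P14) combines (P6) with (P13).

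For (P13), given a left cell $\Gamma$ of $\Xi$, Proposition~\ref{prop:d=d2}(2) identifies $\Gamma$ with a left cell $\Gamma_{\widetilde{W}}\subset\widetilde{W}$ restricted to elements of the form $w_C^+$ with $\co(C)=\nu$ fixed. The Hecke-algebra (P13) supplies a unique distinguished $d\in\Gamma_{\widetilde{W}}$; by \cite[Conj.~14.2(P6) and \S15.6]{Lu03}, $d$ is the longest element in some double coset of $W_\nu\backslash\widetilde{W}/W_\nu$, hence $d=w_D^+$ for a unique $D=(\nu,w_D,\nu)\in\Xi$. Lemma~\ref{lem:d=d1} then yields $D\in\mathcal{D}$, and the nonvanishing $\gamma_{C^t,C}^D\neq 0$ for all $C\in\Gamma$ transports from its Hecke-algebra counterpart through the calibration displayed above. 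Corollary~\ref{cor:DDt} and (P6) assure that any candidate in $\Gamma$ must equal $D$.

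The main obstacle is (P15). The Hecke-algebra version rests on associativity together with a parameter-doubling device, and on the prior availability of (P4), (P8), (P11) to justify restricting sums to a fixed two-sided cell $\mathbf{c}$. I would mirror \cite[\S18]{Lu03} verbatim: write the associativity $(\{C\}\{B\})\{C'\}=\{C\}(\{B\}\{C'\})$ in $\Sc_v$, substitute $v\mapsto v'$ in one factor to produce $\hat g$'s, isolate the leading coefficient at the $\mathbf{c}$-level using (P4) and (P11), and collect terms. Each occurrence of $\mathbf{a}(z)$ is replaced by $\mathfrak{a}(C)+\ell(w_\circ^{\co(C)})$ and each $h_{x,y}^z$ by $\pi(J_\mu) g_{A,B}^C$; the argument then goes through formally. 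The care needed here is logical: the properties must be proved in the order (P1)$\to$(P2,P3,P5,P6,P7)$\to$(P8)$\to$(P4,P9,P10,P11,P13,P14)$\to$(P15), so that each step invokes only what has already been established.
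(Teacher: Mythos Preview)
Your proposal is correct, and the calibration
$\gamma_{A,B}^{C^{t}}=\gamma_{w_A^{+},w_B^{+}}^{(w_C^{+})^{-1}}$
is a clean unifying device that the paper does \emph{not} isolate explicitly (it appears only implicitly, e.g.\ inside the proofs of (P7) and (P13)).  The main methodological difference is in (P2), (P3), (P5) and (P6): the paper proves these \emph{intrinsically} in $\Sc_v$ using the inner product and positivity (Theorem~\ref{thm:inner}, Lemmas~\ref{addlem:anti-automorphismrho}--\ref{lem:d=d00-aa}) together with the anti-automorphism $\Psi$, without appealing to the Hecke-algebra (P2), (P3), (P5), (P6) at all, whereas you transport them wholesale from $\widetilde{\HH}$ via the calibration and Lemma~\ref{lem:d=d1}.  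Your route is shorter and more uniform; the paper's route has the virtue of showing that these properties are native to the Schur-algebra inner-product structure and not merely inherited.  For (P15) the situation is reversed: you propose to rerun Lusztig's \cite[\S18]{Lu03} argument inside $\Sc_v$, but the paper simply divides by $\pi(J_\nu)\hat{\pi}(J_\gamma)$ via Lemma~\ref{lem:d=d1=aaabc} and invokes Hecke (P15) directly through Proposition~\ref{prop:d=d2}(3), which is considerably less work; this is available to you as well and would spare you the delicate bookkeeping you flag.  Two minor points: (P14) follows in one line from Proposition~\ref{prop:d=d2}(3) and Hecke (P14) (your ``(P6)+(P13)'' route does work but needs (P3), (P7), (P8) as well to close the loop $C\sim_{R}D\sim_{L}C^{t}$); and for (P5) you should record $n_D=n_{w_D^{+}}$, which is implicit in the proof of Lemma~\ref{lem:d=d1} via $(\{D\},[(\nu,\id,\nu)])=v^{-\ell(w_\circ^\nu)}p_{\id,w_D^{+}}$.
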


\begin{proof}
(P1). It is Lemma \ref{lem:d=d1-ad-add1}.
\vspace{1mm}

(P2).
Let $\co(D)=\nu$. Recalling \eqref{eq:positivity}, we have
\begin{align}\label{inner-product-1-add}
(\{A\}\{B\}, [(\nu,\id,\nu)])=\sum\limits_{E\in \Xi} g_{A, B}^{E}(\{E\}, [(\nu,\id,\nu)]).
\end{align}
By Corollary~\ref{cor:DDt}, we have $D^{t}\in \mathcal{D}$ and thus
$\Delta(D^{t}) =\mathfrak{a}(D^{t}).$
The identity \eqref{eq:ginner2} in our setting reads
\begin{align}
 \label{eq:ginner2-add}
g_{A, B}^{D^{t}}(\{D^{t}\}, [(\nu,\id,\nu)]) =n_{D^{t}}\gamma_{A, B}^{D}v^{d_{A^t}-d_{A}} +\mathrm{higher~terms},
\end{align}
where $n_{D^{t}} \gamma_{A, B}^{D}\neq 0$.

By Proposition \ref{prop:positive} and Theorem~\ref{thm:inner}, all the summands on the RHS of \eqref{inner-product-1-add} have nonnegative integer coefficients. Hence by \eqref{eq:ginner2-add} the coefficient of $v^{d_{A^t}-d_{A}}$ on the RHS of \eqref{inner-product-1-add} is positive.
By Lemmas \ref{addlem:anti-automorphismrho}--\ref{lem:d=d00-aa}, we have that
  the LHS of \eqref{inner-product-1-add} equals to $v^{d_{A^{t}} -d_A}(\{B\}, \{A^{t}\})$. By Theorem~\ref{thm:inner}, we must have $B=A^{t}$.
\vspace{1mm}

(P3).
Let $\ro(C)=\nu$. We have by \eqref{eq:positivity} that
\begin{align}\label{eq:BF1}
(\{C\}\{C^{t}\}, [(\nu,\id,\nu)])=\sum\limits_{E\in \Xi}g_{C, C^{t}}^{E}(\{E\}, [(\nu,\id,\nu)]).
\end{align}
All the summands on the RHS of \eqref{eq:BF1} have nonnegative integer coefficients, by Proposition ~\ref{prop:positive} and Theorem~\ref{thm:inner}. The identity \eqref{eq:ginner2} in our setting reads
\begin{align}
 \label{eq:ginner2-add-Newadd}
g_{C, C^{t}}^{E}(\{E\}, [(\nu,\id,\nu)]) =n_{E}\gamma_{C, C^{t}}^{E^{t}} v^{\Delta(E) -\mathfrak{a}(E)+d_{C^t}-d_{C}} +\mathrm{higher~terms},
\end{align}
with $n_{E}> 0$ and $\Delta(E) -\mathfrak{a}(E) \ge 0$.

On the other hand, by Lemmas \ref{addlem:anti-automorphismrho}--\ref{lem:d=d00-aa} and Theorem~\ref{thm:inner}, we have
\begin{align}
  \label{eq:inner1}
(\{C\}\{C^{t}\}, [(\nu,\id,\nu)]) =v^{d_{C^{t}} -d_C}(\{C^{t}\}, \{C^{t}\})\in v^{d_{C^{t}} -d_C} (1+v\mathbb{N}[v]).
\end{align}

A comparison of \eqref{eq:BF1}, \eqref{eq:ginner2-add-Newadd} and \eqref{eq:inner1} implies that there is a unique element $D\in \Xi$ such that
\begin{equation}
\label{eq:P3}
n_{D^t} \gamma_{C, C^{t}}^{D} =1,
\end{equation}
and $\Delta(D^{t})-\mathfrak{a}(D^{t})=0$, i.e., $D^{t}\in \mathcal{D}$.
In particular, $\gamma_{C, C^{t}}^{D}\neq 0$. Thanks to Corollary~\ref{cor:DDt}, we have $D\in \mathcal{D}$.
\vspace{1mm}

(P4).
By Lemma \ref{lem:d=d1=aaabc}, we see that $C\preceq_{LR}C'$ implies $w_{C}^{+}\preceq_{LR} w_{C'}^{+}$. Hence $\mathbf{a}(w_{C}^{+})\geq \mathbf{a}(w_{C'}^{+})$ by \cite[Conj.~ 14.2(P4) and \S15.6]{Lu03}. Therefore $\mathfrak{a}(C)\geq \mathfrak{a}(C')$ by Proposition \ref{prop:d=d2}(4). Then the second statement follows.
\vspace{1mm}

(P6).
(We prove (P6) before (P5).)
We have $\gamma_{A, A^t}^{D}\neq 0$, for some $A\in \Xi$, by (P2). Since the product $\{A\}\{A^{t}\}$ is preserved by the anti-automorphism $\Psi$ in \eqref{eq:psi}, we have $0\neq g_{A, A^{t}}^{D}=g_{A, A^{t}}^{D^{t}}$. By Proposition \ref{prop:d=d2}(4) and \cite[Proposition 13.9(a)]{Lu03}, we have $\mathfrak{a}(D)=\mathfrak{a}(D^{t}).$ Hence $0\neq \gamma_{A, A^{t}}^{D^{t}}=\gamma_{A, A^{t}}^{D}$. Since $D$ and $D^{t}$ are both distinguished, we have $D=D^{t}$ by (P3).
\vspace{1mm}

(P5). 
By \eqref{eq:P3} and (P6), we have
$n_{D}\gamma_{A^{t}, A}^{D}=n_{D^{t}}\gamma_{A^{t}, A}^{D}=1$.
Note that $n_{D}>0$ and $\gamma_{A^{t}, A}^{D}$ are both integers, which forces $n_{D}=\gamma_{A^{t}, A}^{D}=1$.
\vspace{1mm}

(P7).
We can assume that one of $\gamma_{A, B}^{C}, \gamma_{C, A}^{B},\gamma_{B, C}^{A}$ is nonzero, saying $\gamma_{A, B}^{C}\neq 0$. Then $g_{A, B}^{C^{t}}\neq 0.$ Hence we can write $A=(\lambda, w_{A}, \gamma),$ $B=(\gamma, w_{B}, \nu)$ and $C^{t}=(\lambda, (w_{C})^{-1}, \nu)$, for $\lambda, \gamma, \nu\in\Lambda_{\ff}.$ By Lemma \ref{lem:d=d1=aaabc} and Proposition \ref{prop:d=d2}(4), we have
\begin{align*}
\gamma_{A, B}^{C}=\gamma_{w_{A}^{+}, w_{B}^{+}}^{w_{C}^{+}},\quad \gamma_{C, A}^{B}=\gamma_{w_{C}^{+}, w_{A}^{+}}^{w_{B}^{+}} \quad\text{and}\quad \gamma_{B, C}^{A}=\gamma_{w_{B}^{+}, w_{C}^{+}}^{w_{A}^{+}}.
\end{align*}
By \cite[Conj.~ 14.2(P7) and \S15.6]{Lu03} we know $\gamma_{w_{A}^{+}, w_{B}^{+}}^{w_{C}^{+}}=\gamma_{w_{C}^{+}, w_{A}^{+}}^{w_{B}^{+}}=\gamma_{w_{B}^{+}, w_{C}^{+}}^{w_{A}^{+}}$. Hence $\gamma_{A, B}^{C}=\gamma_{C, A}^{B}=\gamma_{B, C}^{A}$.
\vspace{1mm}

(P8).
Since $\gamma_{A, B}^{C}\neq 0,$ from the proof of (P7) we have $0\neq\gamma_{A, B}^{C}=\gamma_{w_{A}^{+}, w_{B}^{+}}^{w_{C}^{+}}$. By \cite[Conj.~14.2(P8) and \S15.6]{Lu03} we have $w_{A}^{+}\sim_{L} (w_{B}^{+})^{-1}$, $w_{B}^{+}\sim_{L} (w_{C}^{+})^{-1}$ and $w_{C}^{+}\sim_{L} (w_{A}^{+})^{-1}$. Thus by Proposition~\ref{prop:d=d2}(2) we must have $A\sim_{L}B^{t}$, $B\sim_{L}C^{t}$ and $C\sim_{L}A^{t}$.
\vspace{1mm}

(P9).
Since $C\preceq_{L}C'$, we have $\co(C)=\co(C')$ and $w_{C}^{+}\preceq_{L} w_{C'}^{+}$ by Proposition~ \ref{prop:d=d2}(1). Since $\mathfrak{a}(C)=\mathfrak{a}(C'),$ by Proposition \ref{prop:d=d2}(4) we have $\mathbf{a}(w_{C}^{+})=\mathbf{a}(w_{C'}^{+})$, which implies that $w_{C}^{+}\sim_{L} w_{C'}^{+}$ by \cite[Conj.~ 14.2(P9) and \S15.6]{Lu03}. Hence we obtain $C\sim_{L}C'$ by Proposition~ \ref{prop:d=d2}(2).
\vspace{1mm}

(P10).
If $C\preceq_{R}C'$, we have $C^{t}\preceq_{L}C'^{t}$ through the anti-automorphism $\Psi$ in \eqref{eq:psi}. By (P9) we have $C^{t}\sim_{L}C'^{t}$, and hence $C\sim_{R}C'$.
\vspace{1mm}

(P11).
By Lemma~\ref{lem:d=d1=aaabc}, we see that $C\preceq_{LR}C'$ implies $w_{C}^{+}\preceq_{LR} w_{C'}^{+}$. Since $\co(C) =\co(C')$ and $\mathfrak{a}(C)=\mathfrak{a}(C')$, we have $\mathbf{a}(w_{C}^{+})=\mathbf{a}(w_{C'}^{+})$ by Proposition~\ref{prop:d=d2}(4). By \cite[Conj.~ 14.2(P11) and \S15.6]{Lu03} we have $w_{C}^{+}\sim_{LR} w_{C'}^{+}$. Hence we have $C\sim_{LR}C'$ by Proposition~\ref{prop:d=d2}(3).
\vspace{1mm}

(P13).
Take any $A\in \Gamma$ and let $\co(A) =\nu$.
By \cite[Conj.~ 14.2(P13) and \S15.6]{Lu03} there exists a unique distinguished element $d\in \widetilde{W}$ contained in the left cell containing $w_{A}^{+}$. Set $D =(\nu,w_D,\nu)$ with $w_{D}^{+}=d$, which is distinguished by Lemma~\ref{lem:d=d1}. Thanks to Proposition~\ref{prop:d=d2}(2), we have $D\sim_{L}A$ and hence $D\in \Gamma$. The uniqueness of $D$ is clear.

For each $C\in \Gamma$, we must have $\co(C)=\nu$. By Lemma~\ref{lem:d=d1=aaabc} and Proposition~\ref{prop:d=d2}(4), we have $\gamma_{C^{t}, C}^{D}=\gamma_{(w_{C}^{+})^{-1}, w_{C}^{+}}^{d}$. By Proposition~\ref{prop:d=d2}(2) we have $d\sim_{L} w_{C}^{+}$, and hence $\gamma_{(w_{C}^{+})^{-1}, w_{C}^{+}}^{d}\neq 0$ by \cite[Conj.~ 14.2(P13) and \S15.6]{Lu03}. Thus $\gamma_{C^{t}, C}^{D}\neq 0.$
\vspace{1mm}

(P14).
Note $w_{C^t}^{+} =(w_{C}^{+})^{-1}$.
By \cite[Conj.~ 14.2(P13) and \S15.6]{Lu03}, we have $w_{C}^{+}\sim_{LR}(w_{C}^{+})^{-1}$. Hence $C\sim_{LR}C^{t}$ by Proposition~\ref{prop:d=d2}(3).
\vspace{1mm}

(P15).
By Lemma \ref{lem:d=d1=aaabc}, we have either $g_{C, B}^{A'}=0$ or $g_{C, B}^{A'}=\pi(J_{\nu})^{-1}h_{w_{C}^{+}, w_{B}^{+}}^{w_{A'}^{+}}$, where $\co(C)=\nu$; we have $\hat{g}_{A', C'}^{A}=0$ or $\hat{g}_{A', C'}^{A}=\hat{\pi}(J_{\gamma})^{-1}\hat{h}_{w_{A'}^{+}, w_{C'}^{+}}^{w_{A}^{+}}$, where $\ro(C')=\gamma$, $\hat{\pi}(J_{\gamma})$ is obtained from $\pi(J_{\gamma})$ by the substitution $v\mapsto v'$, and $\hat{h}_{w_{A'}^{+}, w_{C'}^{+}}^{w_{A}^{+}}$ is obtained from $h_{w_{A'}^{+}, w_{C'}^{+}}^{w_{A}^{+}}$ by the substitution $v\mapsto v'$. Hence
\begin{align*}
\sum\limits_{A'\in \mathbf{c}}g_{C, B}^{A'}\hat{g}_{A', C'}^{A}=\sum\limits_{A'\in \mathbf{c}}\pi(J_{\nu})^{-1}\hat{\pi}(J_{\gamma})^{-1}h_{w_{C}^{+}, w_{B}^{+}}^{w_{A'}^{+}}\hat{h}_{w_{A'}^{+}, w_{C'}^{+}}^{w_{A}^{+}}.
\end{align*}
Similarly, we have
$$\sum\limits_{A'\in \mathbf{c}}\hat{g}_{B, C'}^{A'}g_{C, A'}^{A}=\sum\limits_{A'\in \mathbf{c}}\pi(J_{\nu})^{-1}\hat{\pi}(J_{\gamma})^{-1}\hat{h}_{w_{B}^{+}, w_{C'}^{+}}^{w_{A'}^{+}}h_{w_{C}^{+}, w_{A'}^{+}}^{w_{A}^{+}}.
$$
By Proposition~ \ref{prop:d=d2}(3) and \cite[Conj.~ 14.2(P15) and \S15.6]{Lu03} we have
\[
\sum\limits_{A'\in \mathbf{c}}h_{w_{C}^{+}, w_{B}^{+}}^{w_{A'}^{+}}\hat{h}_{w_{A'}^{+}, w_{C'}^{+}}^{w_{A}^{+}}=\sum\limits_{A'\in \mathbf{c}}\hat{h}_{w_{B}^{+}, w_{C'}^{+}}^{w_{A'}^{+}}h_{w_{C}^{+}, w_{A'}^{+}}^{w_{A}^{+}}.
\]
Hence (P15) follows from these identities.
\end{proof}

\begin{cor}\label{preceq-sim-siml}
Let $C, C'\in \Xi$. If $C\preceq_{L} C'$ and $C\sim_{LR}C'$, then $C\sim_{L}C'$.
\end{cor}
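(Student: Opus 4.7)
The plan is to derive this as an immediate consequence of Properties (P4) and (P9) from Theorem~\ref{thm:cell structures}, together with the characterization of $\preceq_L$ given in Proposition~\ref{prop:d=d2}(1).

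First, I would observe that by Proposition~\ref{prop:d=d2}(1), the hypothesis $C \preceq_L C'$ forces $\co(C) = \co(C')$. Combined with the hypothesis $C \sim_{LR} C'$, this allows Property (P4) of Theorem~\ref{thm:cell structures} to apply, giving us $\mathfrak{a}(C) = \mathfrak{a}(C')$. Now we are exactly in the situation of (P9): we have $C \preceq_L C'$ and $\mathfrak{a}(C) = \mathfrak{a}(C')$, so (P9) yields $C \sim_L C'$, as desired.

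There is no real obstacle here — the corollary is a direct three-step syllogism chaining Proposition~\ref{prop:d=d2}(1), (P4), and (P9). The only point worth double-checking is that (P4) as stated requires the column equality hypothesis $\co(C)=\co(C')$, which is indeed supplied by the first step; without that equality (P4) would not give the crucial $\mathfrak{a}$-function equality needed to invoke (P9).
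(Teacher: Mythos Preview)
Your proof is correct and follows essentially the same argument as the paper: the paper also deduces $\co(C)=\co(C')$ from $C\preceq_L C'$, then applies (P4) to obtain $\mathfrak{a}(C)=\mathfrak{a}(C')$, and finishes with (P9). The only cosmetic difference is that the paper does not explicitly cite Proposition~\ref{prop:d=d2}(1) for the column equality, taking it as immediate from the definition of $\preceq_L$.
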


\begin{proof}
Since $C\preceq_{L}C'$, we have $\co(C) =\co(C')$. Since $C\sim_{LR}C'$, we have $\mathfrak{a}(C)=\mathfrak{a}(C')$ by Theorem \ref{thm:cell structures}(P4). Hence $C\sim_{L}C'$ by Theorem \ref{thm:cell structures}(P9).
\end{proof}

\section{Asymptotic Schur algebras}
\label{sec:asymp}
\subsection{The asymptotic Schur algebra $\mathbf{J}^{\mathbf S}$}

For a given two-sided cell $\mathfrak{c}$ in $\widetilde{W}$, let $\widetilde{\HH}_{\mathfrak{c}}$ be the $\mathbb{Z}[v]$-submodule of $\widetilde{\HH}$ generated by Kazhdan-Lusztig elements $\mathcal{C}_{y}$ ($y\in \mathfrak{c}$). For $w\in \mathfrak{c}$, let $\overline{\mathbf{a}}(w)$ denote the smallest nonnegative integer $n$ such that $v^{n}\mathcal{C}_{w}\subset \widetilde{\HH}_{\mathfrak{c}}$. It has been shown in \cite[\S2]{Lu95} that
\begin{equation}
\label{eq:abar=aH}
\mathbf{a}(w)=\overline{\mathbf{a}}(w)\quad \mbox{for any $w\in \widetilde{W}$}.
\end{equation}

Fix a two-sided cell $\mathbf{c}$ in $\Sc_v$. Let ${\mathbf S}_{v;\mathbf{c}}$ be the $\mathbb{Z}[v]$-submodule of $\Sc_v$ generated by the elements $\{\{C\}~|~C \in \mathbf{c}\}$.
For any $C\in \mathbf{c}$, if there exists $n\in \Z_{\geq0}$ such that $v^{n}\{C\}{\mathbf S}_{v;\mathbf{c}}\subset {\mathbf S}_{v;\mathbf{c}}$, then we set $\overline{\mathfrak{a}}(C)$ to be the smallest such $n$; otherwise, we set $\overline{\mathfrak{a}}(C)=\infty$.

The following lemma shows that the functions $\mathfrak{a}$ and $\overline{\mathfrak{a}}$ are the same, which will be used in the proof of Lemma~\ref{lem:P1}.
\begin{lem}
\label{lem:aC=}
For any $C\in\Xi$, we have $\overline{\mathfrak{a}}(C)=\overline{\mathbf{a}}(w_{C}^{+})-\ell(w_\circ^\nu)$, where $\nu=\co(C)$. Hence the function $\mathfrak{a}$ and $\overline{\mathfrak{a}}$ coincide, i.e.,
\begin{equation}
\label{eq:a=aS}
\mathfrak{a} (C) =\overline{\mathfrak{a}} (C), \quad(\forall C\in \Xi).
\end{equation}
\end{lem}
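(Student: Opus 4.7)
The strategy is to first establish $\overline{\mathfrak{a}}(C)=\overline{\mathbf{a}}(w_C^+)-\ell(w_\circ^\nu)$ with $\nu=\co(C)$; the equality $\mathfrak{a}(C)=\overline{\mathfrak{a}}(C)$ will then follow immediately by combining it with \eqref{eq:abar=aH} (giving $\overline{\mathbf{a}}(w_C^+)=\mathbf{a}(w_C^+)$) and Proposition~\ref{prop:d=d2}(4) (giving $\mathfrak{a}(C)=\mathbf{a}(w_C^+)-\ell(w_\circ^\nu)$). The principal tool is Lemma~\ref{lem:d=d1=aaabc}, which yields the identity $\pi(J_\nu)\,g_{C,B}^E=h_{w_C^+,w_B^+}^{w_E^+}$ whenever the row/column data of $B,E$ match those required by the products.

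Observe that $\pi(J_\nu)=v^{\ell(w_\circ^\nu)}\sum_{w\in W_\nu}v^{-2\ell(w)}$ has its lowest-degree term $v^{-\ell(w_\circ^\nu)}$ (arising from $w=w_\circ^\nu$). Reading the equality $h_{w_C^+,w_B^+}^{w_E^+}=\pi(J_\nu)\,g_{C,B}^E$ inside $\Z[v,v^{-1}]$, the minimal $v$-degrees of the two sides differ by exactly $\ell(w_\circ^\nu)$, whence
\begin{equation*}
v^n\,g_{C,B}^E\in\Z[v]\iff v^{n+\ell(w_\circ^\nu)}\,h_{w_C^+,w_B^+}^{w_E^+}\in\Z[v].
\end{equation*}
Unpacking the defining condition $v^n\{C\}\mathbf{S}_{v;\mathbf{c}}\subset\mathbf{S}_{v;\mathbf{c}}$ via the canonical-basis expansion $\{C\}\{B\}=\sum_E g_{C,B}^E\{E\}$, and using Proposition~\ref{prop:d=d2}(3) to match the Schur-algebra cell $\mathbf{c}$ with the Hecke-algebra two-sided cell $\mathfrak{c}$ containing $w_C^+$, one finds that $\overline{\mathfrak{a}}(C)+\ell(w_\circ^\nu)$ is precisely the smallest $m$ such that $v^m\,h_{w_C^+,y}^z\in\Z[v]$ as $(y,z)$ runs over longest-representative pairs $(w_B^+,w_E^+)$ with $B,E\in\mathbf{c}$ and $\ro(B)=\nu$.

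It then remains to show that this restricted family faithfully detects $\overline{\mathbf{a}}(w_C^+)$. The idea is to exploit the $q$-symmetrizer identity $\mathcal{C}_{w_C^+}\mathcal{C}_{w_\circ^\nu}=\pi(J_\nu)\,\mathcal{C}_{w_C^+}$ (valid since $\mathcal{C}_{w_C^+}\in x_\gamma\widetilde{\HH}\,x_\nu$), together with Lemmas~\ref{identification} and~\ref{eq:xandC}, which identify $x_\nu\widetilde{\HH}=\bigoplus_B\A\,\mathcal{C}_{w_B^+}$ where $B$ ranges over triples with $\ro(B)=\nu$. For any $y'\in\widetilde{W}$, one has $\pi(J_\nu)\,\mathcal{C}_{w_C^+}\mathcal{C}_{y'}=\mathcal{C}_{w_C^+}\mathcal{C}_{w_\circ^\nu}\mathcal{C}_{y'}$, and expanding $\mathcal{C}_{w_\circ^\nu}\mathcal{C}_{y'}$ in the canonical basis $\{\mathcal{C}_{w_B^+}\}$ converts an arbitrary product $\mathcal{C}_{w_C^+}\mathcal{C}_{y'}$ into a $\Z[v,v^{-1}]$-linear combination of products $\mathcal{C}_{w_C^+}\mathcal{C}_{w_B^+}$, with the minimal $v$-degrees shifted only by that of $\pi(J_\nu)^{-1}$, already accounted for. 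Hence the $\overline{\mathbf{a}}$-condition for $w_C^+$ reduces to the restricted one, giving $\overline{\mathfrak{a}}(C)+\ell(w_\circ^\nu)=\overline{\mathbf{a}}(w_C^+)$. The main obstacle will be this last step: carefully verifying that the $q$-symmetrizer reduction does not shift minimal $v$-degrees in a way that would spoil the $\overline{\mathbf{a}}$-computation, and that the corresponding argument on the right side cleanly identifies $z$ with $w_E^+$ for $E\in\mathbf{c}$.
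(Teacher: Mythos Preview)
Your strategy for the second statement (deducing $\mathfrak{a}=\overline{\mathfrak{a}}$ from the first via \eqref{eq:abar=aH} and Proposition~\ref{prop:d=d2}(4)) matches the paper exactly, as does the easy inequality $\overline{\mathfrak{a}}(C)+\ell(w_\circ^\nu)\le\overline{\mathbf{a}}(w_C^+)$, which both you and the paper obtain directly from Lemma~\ref{lem:d=d1=aaabc}.

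For the reverse inequality, however, the paper takes a much shorter route than your $q$-symmetrizer reduction. It produces a single explicit witness: by \cite[Conj.~14.2(P13),(P7) and \S15.6]{Lu03}, the unique distinguished element $d$ in the left cell of $w_C^+$ satisfies $\gamma_{w_C^+,d}^{(w_C^+)^{-1}}\neq 0$, so $h_{w_C^+,d}^{w_C^+}$ has a nonzero term in degree $-\overline{\mathbf{a}}(w_C^+)$. Crucially, $d$ is automatically the longest element of a double coset in $W_\nu\backslash\widetilde{W}/W_\nu$ (cf.\ the proof of Proposition~\ref{prop:d=d2}(4)), so it equals $w_D^+$ for some $D=(\nu,w_D,\nu)\in\Xi$ lying in the same two-sided cell as $C$. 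Then Lemma~\ref{lem:d=d1=aaabc} applied to $g_{C,D}^C$ immediately gives $\overline{\mathbf{a}}(w_C^+)\le\overline{\mathfrak{a}}(C)+\ell(w_\circ^\nu)$. No reduction argument is needed.

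Your alternative route has a genuine gap. The claim that $x_\nu\widetilde{\HH}=\bigoplus_B\A\,\mathcal{C}_{w_B^+}$ with $B$ ranging over triples in $\Xi$ with $\ro(B)=\nu$ is false in general: the canonical basis of $x_\nu\widetilde{\HH}$ is indexed by $\{z\in\widetilde{W}:sz<z\ \forall s\in J_\nu\}$, and such a $z$ equals $w_B^+$ for some $B=(\nu,\ast,\lambda)\in\Xi$ only if its right descent set contains $J_\lambda$ for some $\lambda\in\Lambda_\ff$. Unless $\Lambda_\ff$ contains a regular orbit this need not hold, so your expansion of $\mathcal{C}_{w_\circ^\nu}\mathcal{C}_{y'}$ may produce terms outside the restricted family. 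Even granting the expansion, you would still need positivity of the expansion coefficients (so that minimal $v$-degrees behave additively) and control over which terms land in $\mathfrak{c}$ versus strictly lower cells. These are precisely the obstacles you flagged; the paper's witness argument bypasses them entirely.
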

\begin{proof}
By \cite[Conj.~ 14.2(P13) and \S15.6]{Lu03} there exists a unique distinguished element $d$ in the left cell containing $w_{C}^{+}$ such that $\gamma_{(w_{C}^{+})^{-1},w_{C}^{+}}^{d}\neq 0.$ By \cite[Conj.~ 14.2(P7) and \S15.6]{Lu03} we have $\gamma_{w_{C}^{+},d}^{(w_{C}^{+})^{-1}}=\gamma_{(w_{C}^{+})^{-1},w_{C}^{+}}^{d} \neq 0.$ Recall
\begin{align*}
h_{w_{C}^{+},d}^{w_{C}^{+}}=\gamma_{w_{C}^{+},d}^{(w_{C}^{+})^{-1}}v^{-\overline{\mathbf{a}}(w_{C}^{+})}+\mathrm{higher~degree~terms}.
\end{align*}
Set $D =(\nu,d,\nu) \in \Xi$. By Lemma \ref{lem:d=d1=aaabc}, we have $p_{\nu}g_{C, D}^{C}=h_{w_{C}^{+},d}^{w_{C}^{+}}.$ By Proposition \ref{prop:d=d2}(3), $C$ and $D$ lie in the same two-sided cell. Hence we have $\overline{\mathbf{a}}(w_{C}^{+})\leq \overline{\mathfrak{a}}(C)+\ell(w_\circ^\nu).$

On the other hand, by Lemma \ref{lem:d=d1=aaabc} and Proposition \ref{prop:d=d2}(3), we have $\overline{\mathfrak{a}}(C)+\ell(w_\circ^\nu)\leq \overline{\mathbf{a}}(w_{C}^{+})$.
Therefore the first statement holds.
Then the second statement follows by Proposition~\ref{prop:d=d2}(4) and \eqref{eq:abar=aH}.
\end{proof}

For any two-sided cell $\mathbf{c}$ in $\Sc_v$ and $\nu\in \Lambda_{\ff}$, denote
$$\mathbf{c}[\nu]=\{C \in \mathbf{c}~|~\co(C)=\nu\}.$$
\begin{lem}
\label{lem:P1}
\begin{itemize}
\item[(1)] $\overline{\mathfrak{a}}(C)<\infty$ for any $C\in \Xi$.
\item[(2)]
The function $\overline{\mathfrak{a}}$ is constant on $\mathbf{c}[\nu]$ for any two-sided cell $\mathbf{c}$ in $\Sc_v$ and any $\nu\in \Lambda_{\ff}$.
\end{itemize}
\end{lem}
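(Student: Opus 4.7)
My plan is to derive both parts of Lemma~\ref{lem:P1} directly from Lemma~\ref{lem:aC=}, which identifies $\overline{\mathfrak{a}}(C)$ with $\overline{\mathbf{a}}(w_C^+) - \ell(w_\circ^\nu)$ where $\nu=\co(C)$, together with the known constancy of the $\mathbf{a}$-function on two-sided cells of $\widetilde W$. So the argument is essentially a translation of statements about $\widetilde{\HH}$ via the dictionary already provided by Lemma~\ref{lem:d=d1=aaabc} and Proposition~\ref{prop:d=d2}.

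For part~(1), the equality in Lemma~\ref{lem:aC=} exhibits $\overline{\mathfrak{a}}(C)$ as the difference of two finite nonnegative integers. Indeed $\overline{\mathbf{a}}(w_C^+) = \mathbf{a}(w_C^+)$ by \eqref{eq:abar=aH}, and the bound $\mathbf{a}(z)\le \ell(w_0)$ recalled in Section~\ref{sec:cells} (just after the definition of $\mathfrak{a}$) ensures finiteness; this forces $\overline{\mathfrak{a}}(C)<\infty$.

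For part~(2), I would fix $C, C' \in \mathbf{c}[\nu]$. Then $\co(C) = \co(C') = \nu$, so the shifts $\ell(w_\circ^\nu)$ appearing in the formula of Lemma~\ref{lem:aC=} agree. Moreover, $C \sim_{LR} C'$ implies $w_C^+ \sim_{LR} w_{C'}^+$ in $\widetilde W$ by Proposition~\ref{prop:d=d2}(3). Applying \cite[Conj.~14.2(P4) and \S15.6]{Lu03}, which asserts that the $\mathbf{a}$-function is constant on two-sided cells of $\widetilde W$, together with \eqref{eq:abar=aH}, we obtain $\overline{\mathbf{a}}(w_C^+) = \overline{\mathbf{a}}(w_{C'}^+)$, and hence $\overline{\mathfrak{a}}(C) = \overline{\mathfrak{a}}(C')$ via Lemma~\ref{lem:aC=}. (Equivalently, by the identification $\overline{\mathfrak{a}} = \mathfrak{a}$ from \eqref{eq:a=aS}, the statement becomes the special case of Theorem~\ref{thm:cell structures}(P4) in which the hypothesis $\co(C) = \co(C')$ is built into the definition of $\mathbf{c}[\nu]$.)

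There is no real obstacle once Lemma~\ref{lem:aC=} is in hand: every nontrivial ingredient — the structure-constant identity $\pi(J_\mu)g^{C}_{A,B} = h^{w_C^+}_{w_A^+, w_B^+}$, the characterization of cells in $\Sc_v$ via cells in $\widetilde W$, and the equal-parameter properties of $\mathbf{a}$ from \cite{Lu03, Lu95} — has already been assembled in Sections~\ref{sec:cells}--\ref{sec:asymp}.
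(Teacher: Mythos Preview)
Your proposal is correct and follows essentially the same route as the paper: the paper's proof simply cites \eqref{eq:a=aS} (i.e., $\overline{\mathfrak{a}}=\mathfrak{a}$, which is Lemma~\ref{lem:aC=}) together with Theorem~\ref{thm:cell structures}(P4), and you have unpacked exactly these ingredients, even noting the equivalence explicitly in your parenthetical remark.
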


\begin{proof}The statements are clear by \eqref{eq:a=aS} and Theorem~\ref{thm:cell structures}(P4).
\end{proof}

Similar to \cite[\S1.4]{Lu95}, we can use Lemma \ref{lem:P1} to define a $\mathbb{Z}$-algebra $\mathbf{J}^{\mathbf S}_{\mathbf{c}}$ for each two-sided cell $\mathbf{c}$ in $\Sc_v$. This ring $\mathbf{J}^{\mathbf S}_{\mathbf{c}}$ has a $\mathbb{Z}$-basis $\{t_{C}\:|\:C\in \mathbf{c}\}$ with multiplication
\[
t_{A}t_{B}=\sum\limits_{C\in \mathbf{c}}\gamma_{A,B}^{C^{t}}t_{C}.
\]
Denote by $B_0$ the set of all two-sided cells in $\Sc_v$, which is finite by Corollary~\ref{lem:d=d3}(1). We shall denote by $\mathbf{J}^{\mathbf S}$ the direct sum of $\mathbb{Z}$-algebras $\oplus_{\mathbf{c}\in B_{0}}\mathbf{J}^{\mathbf S}_{\mathbf{c}}$, which is an associative $\mathbb{Z}$-algebra. We shall call $\mathbf{J}^{\mathbf S}$ the asymptotic Schur algebra.

It is known (cf. \cite[Theorem~2.2(b)]{Lu87a}) that the number of distinguished elements in $\widetilde{W}$ is finite. We note that the set $\Lambda_{\ff}$ is finite, too. Thus the set $\mathcal{D}$ and $\mathcal{D}_{\mathbf{c}}:=\mathcal{D}\cap\mathbf{c}$ are both finite by Lemma \ref{lem:d=d1}.

\begin{lem}
\label{lem:P2}
The element $\sum_{D\in \mathcal{D}_{\mathbf{c}}}t_{D}$ is the identity element of $\mathbf{J}^{\mathbf S}_{\mathbf{c}}$, while $\sum_{D\in \mathcal{D}}t_{D}$ is the identity element of $\mathbf{J}^{\mathbf S}$.
\end{lem}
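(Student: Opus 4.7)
The plan is to reduce the first statement to checking $t_C \cdot e_{\mathbf{c}} = t_C = e_{\mathbf{c}} \cdot t_C$ for every $C \in \mathbf{c}$, where $e_{\mathbf{c}} := \sum_{D \in \mathcal{D}_{\mathbf{c}}} t_D$. The second statement will then be automatic from the direct sum decomposition $\mathbf{J}^{\mathbf{S}} = \bigoplus_{\mathbf{c} \in B_0} \mathbf{J}^{\mathbf{S}}_{\mathbf{c}}$, since the identity of a direct sum of unital algebras is the sum of the identities of the summands.

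For the right identity I would expand
\[
t_C \cdot e_{\mathbf{c}} = \sum_{E \in \mathbf{c}} \Bigl( \sum_{D \in \mathcal{D}_{\mathbf{c}}} \gamma_{C, D}^{E^t} \Bigr) t_E
\]
using the defining multiplication, and then apply the cyclic symmetry of Theorem~\ref{thm:cell structures}(P7) to rewrite each coefficient as $\gamma_{E^t, C}^{D}$. Since $D$ is distinguished, property (P2) forces $C = (E^t)^t = E$, so the coefficient of $t_E$ vanishes for $E \neq C$. For $E = C$, another application of (P7) writes the coefficient as $\sum_{D \in \mathcal{D}_{\mathbf{c}}} \gamma_{C^t, C}^{D}$. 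Property (P3) applied to $C^t$ (noting $(C^t)^t = C$) furnishes a unique $D_0 \in \mathcal{D}$ with $\gamma_{C^t, C}^{D_0} \neq 0$; property (P8) applied to this non-vanishing constant yields $D_0 \sim_L C$, placing $D_0$ in the same two-sided cell as $C$, so $D_0 \in \mathcal{D}_{\mathbf{c}}$. Finally property (P5) with $A = C$ gives $\gamma_{C^t, C}^{D_0} = 1$, and we conclude $t_C \cdot e_{\mathbf{c}} = t_C$. The verification $e_{\mathbf{c}} \cdot t_C = t_C$ runs in parallel: (P7) and (P2) again collapse the sum to $E = C$, (P3) applied to $C$ locates a unique $D_0 \in \mathcal{D}$ with $\gamma_{C, C^t}^{D_0} \neq 0$, (P8) places $D_0$ in $\mathcal{D}_{\mathbf{c}}$, and (P5) with $A = C^t$ forces $\gamma_{C, C^t}^{D_0} = 1$.

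This argument is essentially a transcription of Lusztig's construction of the asymptotic algebra $J$ for the affine Hecke algebra (cf.\ \cite[\S18]{Lu03}) into the $q$-Schur setting, and all the input is already packaged in Theorem~\ref{thm:cell structures}; I do not anticipate any genuine obstacle. The one point requiring care is the convention $t_A t_B = \sum_C \gamma_{A,B}^{C^t} t_C$: each cyclic permutation via (P7) must be reconciled with the transposition appearing in the superscript, and the choice of whether to apply (P3) and (P5) to $C$ or to $C^t$ must be matched to the handedness of the identity under verification.
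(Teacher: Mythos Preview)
Your proof is correct and follows essentially the same chain (P7)$\to$(P2)$\to$(P3)$\to$(P5) as the paper; the only organizational difference is that the paper proves the global statement for $\mathbf{J}^{\mathbf S}$ directly (summing over all of $\mathcal{D}$, so no cell-membership check is needed), whereas you prove the cell-level statement first and therefore invoke (P8) to ensure the distinguished element $D_0$ lies in $\mathcal{D}_{\mathbf{c}}$. Both routes are equally short and rest on the same ingredients from Theorem~\ref{thm:cell structures}.
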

\begin{proof}
Here we only provide the proof for the second statement.

For $A\in \Xi$, we have
\begin{align*}
(\sum_{D\in \mathcal{D}}t_{D})\cdot t_{A}&=\sum_{D\in \mathcal{D}}\sum_{B\in \Xi}\gamma_{D,A}^{B^{t}}t_{B}\\
&=\sum_{D\in \mathcal{D}}\sum_{B\in \Xi}\gamma_{A,B^{t}}^{D}t_{B} \qquad \text{ by Theorem~\ref{thm:cell structures}(P7)}\\
&=\sum_{D\in \mathcal{D}}\gamma_{A,A^{t}}^{D}t_{A} \qquad \text{ by Theorem~\ref{thm:cell structures}(P2)}\\
&=t_{A} \qquad \text{ by Theorem~\ref{thm:cell structures}(P3, P5)}.
\end{align*}
Similarly, we can verify $t_A\cdot(\sum_{D\in \mathcal{D}}t_{D})=t_A$. Hence $\sum_{D\in \mathcal{D}}t_{D}$ is the identity element of $\mathbf{J}^{\mathbf S}$.
\end{proof}

%

The following proposition gives a characterization of left, right and two-sided cells in $\Sc_v.$
\begin{prop}
\label{characterization-two-cells}
Let $C, C'\in \Xi$.
\begin{itemize}
\item[(1)] $C\sim_{L}C'$ if and only if $t_{C}t_{C'^{t}}\neq 0$; $C\sim_{L}C'$ if and only if $t_{C'}$ appears with nonzero coefficient in $t_{A}t_{C}$ for some $A\in \Xi$.
\item[(2)] $C\sim_{R}C'$ if and only if $t_{C^{t}}t_{C'}\neq 0$; $C\sim_{R}C'$ if and only if $t_{C'}$ appears with nonzero coefficient in $t_{C}t_{A}$ for some $A\in \Xi$.
\item[(3)] $C\sim_{LR}C'$ if and only if $t_{C}t_{A}t_{C'}\neq 0$ for some $A\in \Xi$; $C\sim_{LR}C'$ if and only if $t_{C'}$ appears with nonzero coefficient in $t_{A'}t_{C}t_{A}$ for some $A', A\in \Xi$.
\end{itemize}
\end{prop}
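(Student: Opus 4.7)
The plan is to reduce each part to the structural properties (P2)--(P15) of Theorem~\ref{thm:cell structures}, with the critical additional input being the non-negativity of the structure constants $\gamma_{A,B}^{C^t}$ of $\mathbf{J}^{\mathbf S}$. This non-negativity follows from Proposition~\ref{prop:positive}: since $g_{A,B}^C \in \N[v,v^{-1}]$ and $\gamma_{A,B}^{C^t}$ is by definition the leading (lowest-degree) coefficient of $v^{\mathfrak{a}(C)+\ell(w_\circ^{\co(C)})-\ell(w_\circ^{\co(A)})} g_{A,B}^C$, it is a non-negative integer. This positivity rules out cancellations in sums of $\gamma$-products, which is what drives the forward direction in every claim.

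For (1), the two formulations are equivalent via (P7), since $\gamma_{C, C'^t}^{E^t} = \gamma_{E^t, C}^{C'^t}$, so I focus on the first. The reverse direction is immediate: if $\gamma_{C, C'^t}^{E^t} \neq 0$ for some $E$, then (P8) forces $C \sim_L (C'^t)^t = C'$. For the forward direction, let $\Gamma$ be the common left cell of $C$ and $C'$ with unique distinguished element $D$ (by (P13)). Then (P13) yields $\gamma_{C'^t, C'}^D \neq 0$, and (P13) together with (P7) yield $\gamma_{C, D}^{C^t} = \gamma_{C^t, C}^D \neq 0$. Expanding $(t_C t_{C'^t})\, t_{C'} = t_C (t_{C'^t} t_{C'})$ and extracting the $t_C$-coefficient on the right side gives
\[
\sum_F \gamma_{C'^t, C'}^{F^t}\, \gamma_{C, F}^{C^t},
\]
a sum of non-negative integers (using $D = D^t$ from (P6) when $F=D$) that receives a strictly positive contribution from $F = D$. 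Hence $(t_C t_{C'^t}) t_{C'} \neq 0$, which forces $t_C t_{C'^t} \neq 0$.

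Part (2) is the right-cell analogue, and follows from (1) either via the anti-automorphism $t_C \mapsto t_{C^t}$ of $\mathbf{J}^{\mathbf S}$ (checked using $\gamma_{A,B}^{C^t} = \gamma_{B^t, A^t}^C$, itself a consequence of (P7) combined with the transposition symmetry encoded in $\Psi$), or by repeating the same argument with (P13) applied to the left cell of $C^t$. For (3), the reverse direction (in either form) extracts, from the nonvanishing of the relevant triple product, an $F \in \Xi$ with $\gamma_{C,A}^{F^t}\,\gamma_{F,C'}^{E^t}\neq 0$ (resp.\ $\gamma_{A',C}^{F^t}\,\gamma_{F,A}^{C'^t}\neq 0$); (P8) then gives $C \sim_R F \sim_L C'^t$ (resp.\ $C \sim_L F \sim_R C'$), and (P14) converts $C'^t \sim_{LR} C'$ to conclude $C \sim_{LR} C'$. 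For the forward direction, use the fact already invoked in the proof of Proposition~\ref{prop:d=d2}(3) (cf.\ \cite[\S3.1(k)(l)]{Lu87c}) that $C \sim_{LR} C'$ produces an intermediate $X$ with $C \sim_L X \sim_R C'$, so $X$ lies simultaneously in the left cell of $C$ and the right cell of $C'$. For the first claim of (3), take $A = X^t$ and expand $t_C (t_{X^t} t_{C'}) = \sum_F \gamma_{X^t, C'}^{F^t}\, t_C t_F$: every $F$ appearing with nonzero coefficient satisfies $F^t \sim_L X \sim_L C$ by (P8), hence $t_C t_F \neq 0$ by part (1); positivity assembles these non-vanishing contributions into a nonzero product. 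For the second claim, use the second formulations of (1) and (2) to choose $A'$ and $A$ making $\gamma_{A', C}^{X^t}$ and $\gamma_{X, A}^{C'^t}$ both nonzero, and read off a strictly positive $F = X$ contribution to the coefficient of $t_{C'}$ in $t_{A'} t_C t_A$.

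The main obstacle is keeping the (P7)/(P8) rewrites aligned with the associativity expansions so that the nonzero structure constants supplied by (P13) actually land in the sums being computed; the non-negativity of all $\gamma$'s is what legitimizes passing from a single nonzero term to a nonzero total, but each application requires checking that the targeted $F$ (typically $D$ or $X$) really satisfies the cell-membership constraints imposed by (P8).
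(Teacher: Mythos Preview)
Your proposal is correct and follows precisely the approach the paper indicates: the paper's own proof simply says the argument is as in \cite[Proposition 18.4]{Lu03} using Theorem~\ref{thm:cell structures} and the positivity from Proposition~\ref{prop:positive}, and your write-up is a faithful execution of that strategy, with the (P7)/(P8)/(P13) bookkeeping and the positivity-prevents-cancellation step all handled correctly. The only remark is that the existence of the intermediate $X$ with $C \sim_L X \sim_R C'$ in part~(3) is indeed established inside the proof of Proposition~\ref{prop:d=d2}(3) rather than stated as a separate result, so your citation of it is appropriate.
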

\begin{proof}
The proof is similar to that of \cite[Proposition 18.4]{Lu03} by using Theorem \ref{thm:cell structures} and the positivity of structure constants of $\Sc_v$ (cf. Proposition~\ref{prop:positive}).
\end{proof}

The following theorem is a $q$-Schur algebra analog of the algebra homomorphism for an Hecke algebra established in \cite[\S2.4]{Lu87a} (see \cite[Theorem 2.3]{Du95} and \cite[(4.9)]{Mc03} for finite and affine type $A$, respectively).

\begin{thm}
 \label{prop:d=d2=a'aa}
Let $\mathbf{c}$ be a two-sided cell in $\Sc_v$. Then the $\A$-module homomorphism $\Phi_{\mathbf{c}} :\Sc_v \rightarrow \A\otimes \mathbf{J}^{\mathbf S}_{\mathbf{c}}$ defined by
\[
\Phi_{\mathbf{c}}(\{C\})=\sum\limits_{D\in \mathcal{D}_{\mathbf{c}}, B\in \mathbf{c}}g_{C, D}^{B}t_{B}
\]
is an algebra homomorphism. The $\A$-module homomorphism $\Phi :\Sc_v\rightarrow \A\otimes \mathbf{J}^{\mathbf S}$ defined by
\begin{align}
\Phi (\{C\})&=\sum\limits_{\mathbf{c}\in B_{0}, D\in \mathcal{D}_{\mathbf{c}}, B\in \mathbf{c}}g_{C, D}^{B}t_{B}
 \label{eq:Phi1}
\end{align}
is an algebra homomorphism, preserving the identity elements.
\end{thm}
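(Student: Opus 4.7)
The plan is to adapt Lusztig's approach \cite[\S2.4]{Lu87a} for the analogous map between an affine Hecke algebra and its asymptotic algebra, now that all of Properties (P1)--(P15) are available for $\Sc_v$ via Theorem~\ref{thm:cell structures}. Since $\Phi_{\mathbf{c}}$ is obtained from $\Phi$ by composing with the projection $\A\otimes\mathbf{J}^{\mathbf S}\twoheadrightarrow\A\otimes\mathbf{J}^{\mathbf S}_{\mathbf{c}}$ onto the $\mathbf{c}$-summand, it suffices to prove the claim for $\Phi$; the restriction of the same argument to a single two-sided cell yields the multiplicativity of $\Phi_{\mathbf{c}}$.

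By $\A$-linearity, multiplicativity of $\Phi$ amounts to checking $\Phi(\{A\}\{B\})=\Phi(\{A\})\Phi(\{B\})$ for arbitrary $A,B\in\Xi$. Expanding via \eqref{eq:Phi1} and the multiplication rule in $\mathbf{J}^{\mathbf S}$, comparing the coefficient of $t_{F}$ (for $F\in\mathbf{c}\in B_0$) on the two sides reduces the problem to proving
\[
\sum_{E\in\Xi,\,D\in\mathcal{D}_{\mathbf{c}}} g_{A,B}^{E}\, g_{E,D}^{F}
\;=\;
\sum_{\substack{D_{1},D_{2}\in\mathcal{D}_{\mathbf{c}}\\ F_{1},F_{2}\in\mathbf{c}}} g_{A,D_{1}}^{F_{1}}\, g_{B,D_{2}}^{F_{2}}\, \gamma_{F_{1},F_{2}}^{F^{t}},
\]
where the restriction to the cell $\mathbf{c}$ on the right uses Theorem~\ref{thm:cell structures}(P8). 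I would first apply the associativity of $\Sc_v$ in the form $(\{A\}\{B\})\{D\}=\{A\}(\{B\}\{D\})$ to rewrite the left-hand side as $\sum_{E,\,D\in\mathcal{D}_{\mathbf{c}}} g_{B,D}^{E}\, g_{A,E}^{F}$, and then invoke Theorem~\ref{thm:cell structures}(P15), together with (P7) and (P13) and the characterization of $\gamma_{A,B}^{C^{t}}$ as the leading coefficient of $g_{A,B}^{C}$ in the sense of the $\mathfrak{a}$-function, to identify the rewritten left-hand side with the right-hand side.

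For preservation of the identity element, I would first observe that $\id_{\Sc_{v}}=\sum_{\gamma\in\Lambda_{\ff}}\{(\gamma,\id,\gamma)\}$: indeed, $\phi_{\gamma\gamma}^{\id}$ is the projection onto the $\gamma$-summand $x_{\gamma}\widetilde{\HH}$ of $\T_{\ff}$, and $(\gamma,\id,\gamma)$ is minimal in the Bruhat order on $\Xi$, so by \eqref{def:dA} we have $\{(\gamma,\id,\gamma)\}=[(\gamma,\id,\gamma)]=\phi_{\gamma\gamma}^{\id}$. Since $\phi_{\gamma\gamma}^{\id}\cdot\{C\}=\delta_{\ro(C),\gamma}\{C\}$, we deduce $g_{(\gamma,\id,\gamma),D}^{F}=\delta_{F,D}\delta_{\ro(D),\gamma}$, and therefore
\[
\Phi(\id_{\Sc_{v}})=\sum_{\gamma\in\Lambda_{\ff}}\sum_{D\in\mathcal{D},\,\ro(D)=\gamma} t_{D}=\sum_{D\in\mathcal{D}} t_{D},
\]
which is the identity of $\mathbf{J}^{\mathbf S}$ by Lemma~\ref{lem:P2}.

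The main obstacle lies in the careful deployment of Property (P15) in the multiplicativity step: (P15) is an identity in two indeterminates $v$ and $v'$, and one must specialize $v'$ asymptotically so that the $\hat{g}$-factors degenerate to the corresponding $\gamma$-constants while the $g$-factors in $v$ retain their full polynomial information. This bookkeeping is structurally identical to the one carried out by Lusztig for Hecke algebras, and the positivity afforded by Proposition~\ref{prop:positive} together with the finiteness of $\Lambda_{\ff}$ and $\mathcal{D}$ ensures that the leading-order analysis is unambiguous and that no unexpected cancellations occur.
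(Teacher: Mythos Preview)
Your proposal is correct and follows the same Lusztig-style argument as the paper; the only difference is packaging: the paper invokes the axiomatic result \cite[Proposition~1.9(b)]{Lu95} after verifying its three hypotheses $P_1$, $P_2$, $P_3$ there (via Lemma~\ref{lem:P1}, Lemma~\ref{lem:P2}, and Theorem~\ref{thm:cell structures}(P15), respectively), whereas you unpack that argument directly. Your treatment of the identity element is essentially identical to the paper's.
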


\begin{proof}
Lemma~\ref{lem:P1} is the Property $P_1$ in \cite[\S 1.4]{Lu95} for $\Sc_v$ (together with its canonical basis $\B (\Sc_v)$). Lemma~\ref{lem:P2} shows that $\Sc_v$ and $\B (\Sc_v)$ possess the Property $P_2$ in \cite[\S 1.5]{Lu95}. The Property $P_3$ in \cite[\S 1.7]{Lu95} for $\Sc_v$ and $\B (\Sc_v)$ is just Theorem~\ref{thm:cell structures}(P15).
Hence, by \cite[Proposition 1.9(b)]{Lu95} we have the desired homomorphism $\Phi_{\mathbf{c}}$. Therefore $\Phi$ is also an algebra homomorphism by noting that $\mathbf{J}^{\mathbf S}_{\mathbf{c}}\mathbf{J}^{\mathbf S}_{\mathbf{c}'}=0$ for $\mathbf{c} \neq \mathbf{c}'$.

Next we show that $\Phi$ preserves the identity elements of the two algebras. The identity element of $\Sc_v$ is $\sum_{\nu\in \Lambda_{\ff}}[(\nu,\id,\nu)]$; here we recall $[(\nu,\id,\nu)] =\{(\nu,\id,\nu)\}$. If $D\in \mathcal{D}$ with $\co(D)=\nu$, then by Lemma \ref{lem:d=d1=aaabc} we have $g_{(\nu,\id,\nu), D}^{B}=\delta_{D, B}.$ Hence
\[
\Phi([(\nu,\id,\nu)])=\sum\limits_{D\in \mathcal{D}, \co(D)=\nu}t_{D}.
\]
Consequently, $\Phi(\sum_{\nu\in \Lambda_{\ff}} [(\nu,\id,\nu)])=\sum_{D\in \mathcal{D}}t_{D}.$
\end{proof}
\begin{rem}
By Theorem~ \ref{thm:cell structures} and Corollary~ \ref{preceq-sim-siml}, we can rewrite \eqref{eq:Phi1} as
\begin{align*}
\Phi (\{C\})
&=\sum\limits_{D\in \mathcal{D}, B\in \Xi, D\sim_{L}B}g_{C, D}^{B}t_{B}
\\
&=\sum\limits_{D\in \mathcal{D}, B\in \Xi, \mathfrak{a}(D)=\mathfrak{a}(B)}g_{C, D}^{B}t_{B}.
\end{align*}
\end{rem}
\subsection{A double centralizer property}

Let $\tilde{\mathbf{J}}$ denote the asymptotic Hecke algebra (an associative unital $\mathbb{Z}$-algebra) associated to $\widetilde{\HH}$ (cf. \cite{Lu87a}). By definition, $\tilde{\mathbf{J}}$ is equipped with a $\mathbb{Z}$-basis $\{t_{w}\:|\:w\in \widetilde{W}\}$ with multiplication
\[
t_{x}t_{y}=\sum\limits_{z\in W}\gamma_{x, y}^{z^{-1}}t_{z}.
\]

Suppose $\Lambda_{\ff}$ contains at least one regular $\widetilde{W}$-orbit. Let $\omega$ be a regular orbit in $\Lambda_{\ff}.$ For any two $\gamma, \nu\in \Lambda_{\ff},$ let $\tilde{\mathbf{J}}_{\gamma\nu}$ be the $\mathbb{Z}$-submodule of $\tilde{\mathbf{J}}$ generated by the elements $\{t_{w}\:|\:w\in \mathcal D_{\gamma\nu}^{+}\}$.
We define a tensor space $\mathbb T_{\ff}$ by
\[
\mathbb T_{\ff} =\oplus_{\gamma\in \Lambda_{\ff}}\tilde{\mathbf{J}}_{\gamma\omega}.
\]
Note that $\tilde{\mathbf{J}}$ acts naturally on the right of $\mathbb T_{\ff}$. Set $\mathcal{D}_{\omega}=\{D\in \mathcal{D}\:|\:\co(D)=\omega\}$ and $e=\sum_{D\in \mathcal{D}_{\omega}}t_{D}$. Then $e$ is idempotent and $\mathbf{J}^{\mathbf S}e$ is isomorphic to $\mathbb T_{\ff}.$ Via the isomorphism, $\mathbb T_{\ff}$ becomes a left $\mathbf{J}^{\mathbf S}$-module. We have the following double centralizer property, which can be proved in the same way as for \cite[Theorem 3.2]{Du95}.

\begin{prop}
\label{prop:double-centralizer-property}
Suppose $\Lambda_{\ff}$ contains at least one regular $\widetilde{W}$-orbit. We have
\begin{align*}
\mathbf{J}^{\mathbf S}\cong & \; \End_{\tilde{\mathbf{J}}}(\mathbb T_{\ff}),
\\
& \; \End_{\mathbf{J}^{\mathbf S}}(\mathbb T_{\ff})  \cong  \tilde{\mathbf{J}}.
\end{align*}
\end{prop}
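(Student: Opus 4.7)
The plan is to reduce the double centralizer property to a standard idempotent/Morita argument applied to $e = \sum_{D \in \mathcal{D}_\omega} t_D$, mirroring the proof of \cite[Theorem~3.2]{Du95}. The argument splits into three auxiliary facts -- $e$ is idempotent, $e \mathbf{J}^{\mathbf S} e \cong \tilde{\mathbf{J}}$ as algebras, and $\mathbf{J}^{\mathbf S} e \mathbf{J}^{\mathbf S} = \mathbf{J}^{\mathbf S}$ -- after which the conclusion is formal.

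First, one verifies $t_D t_{D'} = \delta_{D, D'} t_D$ for $D, D' \in \mathcal{D}$: Theorem~\ref{thm:cell structures}(P2) combined with (P7) forces the only potentially nonzero term in $t_D t_{D'}$ to occur when $D$ and $D'$ lie in the same left cell, at which point (P13) forces $D = D'$ and (P5) pins the coefficient to $1$. Summing over $\mathcal{D}_\omega$ yields $e^2 = e$. For the algebra isomorphism $e \mathbf{J}^{\mathbf S} e \cong \tilde{\mathbf{J}}$, the regularity of $\omega$ (so $W_\omega = \{\id\}$ and $w_{(\omega, w, \omega)}^+ = w$) together with (P13) shows $e \mathbf{J}^{\mathbf S} e$ has $\mathbb{Z}$-basis $\{t_{(\omega, w, \omega)} : w \in \widetilde{W}\}$; Lemma~\ref{lem:d=d1=aaabc} and Proposition~\ref{prop:d=d2}(4) (compare the proof of Theorem~\ref{thm:cell structures}(P7)) then give $\gamma_{(\omega, x, \omega), (\omega, y, \omega)}^{(\omega, z^{-1}, \omega)} = \gamma_{x, y}^{z^{-1}}$, so $t_{(\omega, w, \omega)} \mapsto t_w$ is the required isomorphism.

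For the fullness $\mathbf{J}^{\mathbf S} e \mathbf{J}^{\mathbf S} = \mathbf{J}^{\mathbf S}$, fix $C \in \Xi$. Theorem~\ref{thm:cell structures}(P14) gives $C \sim_{LR} C^t$; Corollary~\ref{lem:d=d3}(2) identifies the two-sided cell of $C$ with a two-sided cell of $\widetilde{\HH}$, which must contain some distinguished $d \in \widetilde{W}$. Setting $D = (\omega, d, \omega) \in \mathcal{D}_\omega$, the elements $C$ and $D$ share a two-sided cell, so Proposition~\ref{characterization-two-cells}(3) produces $A, B \in \Xi$ with $t_C$ appearing with nonzero coefficient in $t_A t_D t_B$. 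Granted these three facts, the conclusion is standard: the general identification $\End_R(Re) \cong (eRe)^{\mathrm{op}}$ combined with $e \mathbf{J}^{\mathbf S} e \cong \tilde{\mathbf{J}}$ yields $\End_{\mathbf{J}^{\mathbf S}}(\mathbb T_{\ff}) \cong \tilde{\mathbf{J}}$ (after composing with the anti-involution $t_w \mapsto t_{w^{-1}}$ of $\tilde{\mathbf{J}}$ to absorb the opposite), while the fullness of $e$ makes $\mathbf{J}^{\mathbf S} e$ a progenerator, so left multiplication gives $\mathbf{J}^{\mathbf S} \cong \End_{e\mathbf{J}^{\mathbf S} e}(\mathbf{J}^{\mathbf S} e) \cong \End_{\tilde{\mathbf{J}}}(\mathbb T_{\ff})$.

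The main obstacle is the fullness step: one must confirm that every two-sided cell of $\Sc_v$ meets the $\omega$-column stratum $\{C \in \Xi : \co(C) = \omega\}$. This rests essentially on the regularity of $\omega$ (which gives a bijection between distinguished elements of $\widetilde{W}$ and $\mathcal{D}_\omega$, via $d \mapsto (\omega, d, \omega)$) together with Corollary~\ref{lem:d=d3}(2). A secondary technical point is keeping left/right and opposite-algebra conventions aligned when identifying $(e\mathbf{J}^{\mathbf S} e)^{\mathrm{op}}$ with $\tilde{\mathbf{J}}$, which is handled cleanly by the anti-involution $t_w \mapsto t_{w^{-1}}$ on $\tilde{\mathbf{J}}$ lifted from the standard anti-involution of $\widetilde{\HH}$.
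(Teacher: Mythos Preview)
Your approach is exactly the one the paper intends: it gives no proof beyond ``proved in the same way as for \cite[Theorem~3.2]{Du95}'', and your idempotent/Morita reduction via $e=\sum_{D\in\mathcal D_\omega}t_D$ is precisely that strategy. The verifications that $e^2=e$ and that $e\mathbf J^{\mathbf S}e\cong\tilde{\mathbf J}$ (via $t_{(\omega,w,\omega)}\mapsto t_w$, using regularity of $\omega$ so that $\pi(J_\omega)=1$ and $w^+_{(\omega,w,\omega)}=w$) are correct and cleanly argued.

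There is, however, a genuine gap in your fullness step. You deduce from Proposition~\ref{characterization-two-cells}(3) that for each $C$ the basis element $t_C$ \emph{appears with nonzero coefficient} in some product $t_A t_D t_B\in \mathbf J^{\mathbf S}e\mathbf J^{\mathbf S}$, and then assert $\mathbf J^{\mathbf S}e\mathbf J^{\mathbf S}=\mathbf J^{\mathbf S}$. Over a field that inference is fine, but $\mathbf J^{\mathbf S}$ is a $\mathbb Z$-algebra and the statement is over~$\mathbb Z$: a basis element occurring in some member of an ideal does not force it to lie in the ideal. Your argument therefore yields only $\mathbf J^{\mathbf S}e\mathbf J^{\mathbf S}\otimes\mathbb Q=\mathbf J^{\mathbf S}\otimes\mathbb Q$, which suffices for injectivity of $\mathbf J^{\mathbf S}\to\End_{\tilde{\mathbf J}}(\mathbb T_{\ff})$ but not for surjectivity.

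One clean way to close the gap avoids fullness altogether. For each $\gamma\in\Lambda_\ff$ set $f_\gamma=\sum t_d$, summed over distinguished $d\in\widetilde W$ with $sd<d$ for all $s\in J_\gamma$ (a finite sum). Since right cells preserve left descent sets, one checks $\tilde{\mathbf J}_{\gamma\omega}=f_\gamma\tilde{\mathbf J}$ as right ideals, whence
\[
\End_{\tilde{\mathbf J}}(\mathbb T_\ff)\cong\bigoplus_{\gamma,\nu}\Hom_{\tilde{\mathbf J}}(f_\nu\tilde{\mathbf J},\,f_\gamma\tilde{\mathbf J})\cong\bigoplus_{\gamma,\nu}f_\gamma\tilde{\mathbf J}f_\nu=\bigoplus_{\gamma,\nu}\tilde{\mathbf J}_{\gamma\nu},
\]
and the bijection $C\leftrightarrow w_C^+$ together with your structure-constant identification $\gamma_{A,B}^{C}=\gamma_{w_A^+,w_B^+}^{w_C^+}$ matches this with $\mathbf J^{\mathbf S}$ on the nose. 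Alternatively, if you prefer to keep the Morita format, you must actually exhibit $1=\sum_{D\in\mathcal D}t_D$ inside $\mathbf J^{\mathbf S}e\mathbf J^{\mathbf S}$, which requires more than Proposition~\ref{characterization-two-cells}(3) provides.
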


\begin{rem}
The results in Sections~\ref{sec:prelim}--\ref{sec:geom} are affine counterparts of the constructions in \cite{LW22} for $q$-Schur algebras of arbitrary {\em finite} type. On the other hand, the results in Section~\ref{sec:inner} (Theorem~\ref{thm:inner}), Section~\ref{sec:cells} (Proposition~\ref{prop:d=d2}, Theorem~\ref{thm:cell structures}), and Section~\ref{sec:asymp} (Proposition~\ref{characterization-two-cells}, Theorem~\ref{prop:d=d2=a'aa}, Proposition~\ref{prop:double-centralizer-property}) are valid for $q$-Schur algebras of finite type with the same proofs.
\end{rem}

\begin{rem}
According to \cite[Conj. \S3.15]{Lu87c} and \cite[Theorem~ 4]{BFO09}, the asymptotic algebra associated to each cell in a Hecke algebra of finite type admits an equivariant K-group realization. An equivariant K-group realization for the asymptotic algebra associated to each cell in a $q$-Schur algebra of finite type can be formulated similarly.
\end{rem}


\end{document}